\newcommand{\sig}{\lambda}
\newcommand{\entails}{\vdash}
\newcommand{\bbbn}{\mathbb{N}}
\newcommand{\bbbr}{\mathbb{R}}
\newcommand{\bbbc}{\mathbb{C}}
\definecolor{shadecolor}{gray}{.85}%
\definecolor{tintedcolor}{gray}{.80}%
\definecolor{mytintedcolor}{gray}{.95}%
\newdimen\svparindent
\newcounter{tmpthm}
\newcounter{tmpconj}
\newenvironment{mainconjecture}[1]{\setcounter{tmpconj}{\value{conjecture}}%
\setcounter{conjecture}{#1}\addtocounter{conjecture}{-1}%
\begin{conjecture}}{\setcounter{conjecture}{\value{tmpconj}}\end{conjecture}}
\newenvironment{mytinted}{%
  \MakeFramed {\FrameRestore}}%
{\endMakeFramed}
{\endlist\end{mytinted}\egroup}
\newtheorem{theorem}{Theorem}
\newtheorem{proposition}{Proposition}
\newtheorem{corollary}{Corollary}
\newtheorem{lemma}[theorem]{Lemma}
\theoremstyle{definition}
\newtheorem{definition}[theorem]{Definition}
\newtheorem{example}[theorem]{Example}
\theoremstyle{remark}
\newtheorem{fact}[theorem]{Fact}
\newtheorem{problem}{Problem}
\newtheorem{conjecture}{Conjecture}
\begin{document}

\title{First-Order Limits, an Analytical Perspective}
\author{Jaroslav Ne{\v s}et{\v r}il}
\address{Jaroslav Ne{\v s}et{\v r}il\\
Computer Science Institute of Charles University (IUUK and ITI)\\
   Malostransk\' e n\' am.25, 11800 Praha 1, Czech Republic}
\email{nesetril@iuuk.mff.cuni.cz}
\thanks{Supported by grant ERCCZ LL-1201 
and CE-ITI P202/12/G061, and by the European Associated Laboratory ``Structures in
Combinatorics'' (LEA STRUCO)}

\author{Patrice Ossona~de~Mendez}
\address{Patrice~Ossona~de~Mendez\\
Centre d'Analyse et de Math\'ematiques Sociales (CNRS, UMR 8557)\\
  190-198 avenue de France, 75013 Paris, France
	--- and ---
Computer Science Institute of Charles University (IUUK)\\
   Malostransk\' e n\' am.25, 11800 Praha 1, Czech Republic}
\email{pom@ehess.fr}
\thanks{Supported by grant ERCCZ LL-1201 and by the European Associated Laboratory ``Structures in
Combinatorics'' (LEA STRUCO)}

\date{\today}

\subjclass[2010]{Primary  03C13 (Finite structures), 03C98 (Applications of model theory), 05C99 (Graph theory),  06E15 (Stone spaces and related structures), Secondary 28C05 (Integration theory via linear functionals)}

 \keywords{Graph \and Relational structure \and Functional Analysis \and Graph limits \and Structural limits \and Radon measures \and Stone space \and Model theory \and First-order logic \and Measurable graph}

\begin{abstract}
In this paper we present a novel approach to graph (and structural) limits based on model theory and analysis. 
The role of Stone and Gelfand dualities is displayed prominently and leads to a general theory, which we believe is naturally emerging.
This approach covers all the particular examples of structural convergence and it put the whole in new context. As an application, it leads to new intermediate examples of structural convergence and to a ``grand conjecture'' dealing with sparse graphs. We survey the recent developments.
\end{abstract}

\maketitle

\section{Introduction}
\label{sec:intro}
Let $G_1,G_2,\dots,G_n$ be a sequence of graphs with increasing orders. The theory of graph limits allows us to capture some common features of such a sequence by a single structure, the limit object. Such features include property testing and subgraph frequencies.
Positive instances of this procedure are related to both dense and sparse graphs.
This recently very active area of combinatorics is discussed for instance is \cite{LovaszBook}.

Can one demand that the limit object satisfies some further properties such as degree distribution, or bounded diameter, or non-existence of certain subgraphs?

This we approach by the  notion of first-order limits of structures, which was introduced in~\cite{CMUC,NPOM1arxiv}. 

This approach is based on the combination of model theory and functional analysis, and proved to be useful in dealing with ``intermediate'' classes, such as the class of trees and forests \cite{NPOM2arxiv}.
It also generalizes (and as well puts in a new context) several notions of graph convergence studied earlier \cite{Borgs2005,Lov'asz2006,
Benjamini2001,Aldous2006}.

In Sections~\ref{sec:mu}--\ref{sec:cons}
we review here some of these results. However we also provide a new setting which generalizes and complements the earlier work of the authors \cite{CMUC}. We now view our approach as
a natural approach, which lies midway between functional analysis approach to quantum theory and limits of graphs. This, perhaps even closer to functional analysis, we tried to explain in Section~\ref{sec:ana}.

Here is an outline of what we do:
We generalize aspects of the theory of graph limits, moving from a study a homomorphism and subgraph profiles to considering full statistics of first-order formula satisfaction. 
In our setting the underlying notion of convergence is, in essence, model theoretic, a relies on the the following notion~\cite{CMUC}:

For a $\sigma$-structure $\mathbf A$ and a first-order formula $\phi$ (in the language of $\sigma$, with free variables $x_1,\dots,x_p$),  we denote by $\phi(\mathbf A)$ the {\em satisfying set} of $\phi$ in $\mathbf A$:
$$
\phi(\mathbf A)=\{(v_1,\dots,v_p)\in A^p:\ \mathbf{A}\models\phi(v_1,\dots,v_p)\},
$$
and we define the  {\em Stone pairing} of $\phi$ and $\mathbf A$ as 
the probability 
\begin{equation}
\label{eq:sp}
\langle\phi,\mathbf{A}\rangle=\frac{|\phi(\mathbf{A})|}{|A|^p}
\end{equation}
that $\mathbf{A}$ satisfies $\phi$ for a (uniform independent) random interpretation of the random variables.

A sequence $(\mathbf{A}_n)_{n\in\bbbn}$ of finite $\sigma$-structures
 is {\em FO-convergent} if
 the sequence $(\langle\phi,\mathbf{A}_n\rangle)_{n\in\bbbn}$ converges for every first-order formula $\phi$.
It is important that one can derive a weakened
notion of {\em $X$-convergence} (for a fragment $X$ of first-order logic) by restricting the range of the test formulas $\phi$
to $X$.

We shall see that this abstract generalization of left convergence and local convergence can be seen as a particular (commutative) case of a general framework, which occurs naturally in functional analysis (in its description of quantum physics). 
This connection not only legitimates some of the constructions introduced to study first-order limits of structures, but it also put the whole framework in perspective, for a possible extension to the non-commutative setting of quantum logic. This connection will be discussed in Section~\ref{sec:ana}, which may be seen as generalization and more uniform treatment of \cite{NPOM1arxiv}. 

Encouraged by this general analytic setting, we show  that first-order limits (shortly ${\rm FO}$-limits) and, more generally, $X$-limits can be uniquely represented by a probability measure $\mu$ on the Stone space $S$ dual to the Lindenbaum-Tarski algebra of the  formulas. In this setting, there is a one-to-one map 
$\mathbf A\mapsto \mu_{\mathbf A}$ from the 
class of finite $\sigma$-structures to the space of probability measures on $S$, with
 \begin{equation}
 \label{eq:mu0}
 \int_S \mathbf 1_\phi(T)\,{\rm d}\mu_{\mathbf A}(T)= \langle\phi,\mathbf{A}\rangle,
 \end{equation}
where $\mathbf 1_\phi$ is the indicator function of the clopen subset of $S$ dual to the formula $\phi$ in Stone duality.
In this setting, a sequence $(\mathbf{A}_n)_{n\in\bbbn}$ of finite
$\sigma$-structures is first-order convergent if and only if the measures $\mu_{\mathbf{A}_n}$ converge weakly to some measure $\mu$. 
 In such a case, the probability measure $\mu$ represents the limit of the sequence  $(\mathbf{A}_n)_{n\in\bbbn}$ and, for every first-order formula $\phi$ it holds
 \begin{equation}
 \label{eq:mu}
 \int_S \mathbf 1_\phi(T)\,{\rm d}\mu(T)=
 \lim_{n\rightarrow\infty}\int_S \mathbf 1_\phi(T)\,{\rm d}\mu_{\mathbf{A}_n}(T)=
 \lim_{n\rightarrow\infty} \langle\phi,\mathbf{A}_n\rangle.
 \end{equation}
 
This answers in a very general form the problem (originally posed by J. Chayes, see \cite[Introduction]{LovaszBook}) whether convergence leads to a limit distribution.

The particular definition~\eqref{eq:sp} of the Stone pairing of a first-order formula and a finite structure can be extended, under certain conditions, to infinite structures. An obvious necessary condition for such an extension is that the domain of the considered infinite structure should be a measurable space, with the property that every first-order definable set is measurable. Note that the difficulty here stands in the fact that the class of definable sets is closed by projections, what is not the case of measurable sets. 
Consequently, we introduce in Section~\ref{sec:mod} the following notions: a {\em Relational Sample Space} (RSS) is a relational structure, whose domain is a standard Borel space, with the property that every first-order definable subset of a power of the domain is measurable (with respect to Borel product measure). A {\em modeling} $\mathbf M$  \cite{NPOM1arxiv} is an RSS equipped with a probability measure denoted $\nu_{\mathbf M}$. In this setting, we can extend the definition of Stone pairing to modelings: for a modeling $\mathbf{M}$ and a first-order formula $\phi$ with free variables $x_1,\dots,x_p$ we define
\begin{equation}
\label{eq:spm}
\langle\phi,\mathbf{M}\rangle=\idotsint \mathbf 1_{\phi(\mathbf M)}(v_1,\dots,v_p)\,{\rm d}\nu_{\mathbf{M}}(v_1)\,\dots\,{\rm d}\nu_{\mathbf{M}}(v_p),
\end{equation}
where $\mathbf 1_{\phi(\mathbf M)}$ is the indicator function of the set
\begin{equation}
\label{eq:sol}
\phi(\mathbf M)=\{(v_1,\dots,v_p)\in M^p:\ \mathbf M\models\phi(v_1,\dots,v_p)\}.
\end{equation}

This definition naturally leads to the two following representation problems, which can be seen as  analogs of celebrated Aldous-Lyons problem on graphings. 

\begin{problem}
Characterize those probability measures $\mu$ on $X$ for which there exists
a sequence $(\mathbf A_n)_{n\in\bbbn}$ of finite structures such that for every first-order formula $\phi(x_1,\dots,x_p)$ it holds
\begin{equation}
\label{eq:goodmu}
\int_S \mathbf 1_\phi(T)\,{\rm d}\mu(T)=\lim_{n\rightarrow\infty}\langle\phi,\mathbf{A}_n\rangle.
\end{equation}
\end{problem}

\begin{problem}
Characterize those first-order convergent sequences of finite structures $(\mathbf{A}_n)_{n\in\bbbn}$ for which there exists
a modeling $\mathbf M$ such that for every first-order formula $\phi$ it holds
\begin{equation}
\label{eq:modlim}
\langle\phi,\mathbf{M}\rangle=\lim_{n\rightarrow\infty}\langle\phi,\mathbf{A}_n\rangle.
\end{equation}
\end{problem}

As we shall see, such modeling limits do not exist in general (but sometimes they do) and thus we shall be interested in the following problem.
\begin{problem}
\label{pb:modlim}
Characterize classes with modeling limits, that is classes $\mathcal C$  of finite structures, such that for every first-order convergent sequence $(\mathbf A_n)_{n\in\bbbn}$ of structures in $\mathcal C$  there is a modeling $\mathbf{M}$ such that $\langle\phi,\mathbf{M}\rangle=\lim_{n\rightarrow\infty}\langle\phi,\mathbf{A}_n\rangle$ for every first-order formula $\phi$.
\end{problem}
It appears that this last problem is, perhaps surprisingly, related to the sparse--dense dichotomy.
The authors introduced in \cite{ND_characterization,ND_logic} a dichotomy of classes of graphs,
between (sparse) {\em nowhere dense} classes and (dense) {\em somewhere dense} classes. This dichotomy, which can be expressed in numerous (and non trivially equivalent) ways, appears to be 
deeply related to first-order related properties of these classes.
Based on a characterization of nowhere dense classes by means of Vapnik-Chervonenkis dimension
of model-theoretic interpretations~\cite{Adler2013} and a characterization of random free
hereditary classes of graphs~\cite{Lovasz2010}, we proved in~\cite{NPOM1arxiv} that a monotone class
of graphs with modeling limits is necessarily nowhere dense.
We believe that nowhere dense--somewhere dense dichotomy actually gives the answer to Problem~\ref{pb:modlim}.
\begin{conjecture}[\cite{NPOM1arxiv}]
\label{conj:ND}
Every monotone nowhere dense class of graphs has modeling limits.
\end{conjecture}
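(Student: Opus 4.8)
The plan is to first obtain, by a general model-theoretic construction, \emph{some} measurable structure realizing the limiting Stone pairings --- thereby localizing the difficulty --- and then, using monotone nowhere density in an essential way, to replace it by a bona fide modeling; the decisive point will be the relational-sample-space axiom.

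First, fix a non-principal ultrafilter $\mathcal U$ on $\bbbn$ and form the ultraproduct $\mathbf A=\prod_{\mathcal U}\mathbf A_n$, equipping its domain with the Loeb measure $\nu$ built from the normalized counting measures on the $A_n$. For every first-order $\phi$ the satisfying set $\phi(\mathbf A)$ is an internal set --- the ultraproduct of the $\phi(\mathbf A_n)$ --- hence Loeb-measurable, and, by Keisler's Fubini theorem for Loeb product measures, the iterated integral defining $\langle\phi,\mathbf A\rangle$ in \eqref{eq:spm} equals $\lim_{\mathcal U}\langle\phi,\mathbf A_n\rangle=\lim_n\langle\phi,\mathbf A_n\rangle$, the last equality because the sequence is $\mathrm{FO}$-convergent. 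So $(\mathbf A,\nu)$ satisfies $\langle\phi,\mathbf A\rangle=\lim_n\langle\phi,\mathbf A_n\rangle$ for all $\phi$; the sole obstruction to its being a modeling is that a Loeb space is not standard Borel.

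Second --- and this is where the hypothesis enters --- I would descend to a standard Borel model. Since $\mathcal C$ is monotone it is closed under induced subgraphs and Gaifman balls, and by Adler--Adler a monotone nowhere dense class is stable and dependent, the very structural richness that underlies the necessity of nowhere density established in \cite{NPOM1arxiv}. The steps would be: (i) apply Gaifman's locality theorem to reduce $\mathrm{FO}$-convergence to the convergence, for each radius $r$ and arity $q$, of the distribution over $\mathbf A_n$ of the $r$-local type of a random $q$-tuple in a coloured copy of $\mathbf A_n$, together with convergence of the truth values of sentences; (ii) use that $\mathcal C$ admits low tree-depth colourings with a subpolynomial number of colours, so that every relevant local type is carried by a substructure of bounded tree-depth; (iii) invoke the finite-depth quantifier elimination available for bounded-tree-depth structures, by which their $\mathrm{FO}$-limits are representable by Borel probability measures on explicit standard Borel spaces of rooted finite-depth trees --- that is, by honest modelings (of which the forest case treated in \cite{NPOM2arxiv} is an instance). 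The unbounded but subpolynomial number of colours would be absorbed by diagonalizing over the resolution $(r,q)$, at each level of which only finitely many colours are statistically relevant.

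Finally, one glues the local modelings into a single $\mathbf M$ as a projective limit of standard Borel spaces, with $\nu_{\mathbf M}$ obtained from the limiting finite measures by a Kolmogorov-type consistency argument, and verifies $\langle\phi,\mathbf M\rangle=\lim_n\langle\phi,\mathbf A_n\rangle$ by a direct computation via Gaifman's theorem (applied to $\mathbf M$ and, in the same fashion, to each $\mathbf A_n$), the Loeb model of the first step serving as a consistency check. The hard part will be precisely the obstruction emphasized in the introduction: first-order definable subsets of $M^p$ are closed under projection, whereas Borel sets are not (a projection of a Borel set is only analytic), so the relational-sample-space property is not free. Proving it amounts to showing that in the nowhere dense limit every existential quantifier over the standard Borel domain can be rewritten, on the positive-measure events, as a \emph{countable} union of Borel sets; this ought to follow from the locality and bounded depth of quantification --- Gaifman locality together with low tree-depth colourings limiting how far and how deep a witness can hide --- but carrying it out uniformly in $\phi$, compatibly with $\nu_{\mathbf M}$, while keeping the subpolynomial colour blow-up from destroying standard Borelness of the projective limit, is the genuine content of the conjecture.
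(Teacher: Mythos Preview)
The statement you are trying to prove is labelled \emph{Conjecture} in the paper, and for good reason: the paper does not prove it. Section~\ref{sec:pb} explicitly lists it as the ``main conjecture'' and records only partial progress (bounded degree, forests, and the class $\mathcal C_f$ of Proposition~\ref{prop:ND}). There is therefore no ``paper's own proof'' to compare against; what you have written is a proposed attack on an open problem.

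As a research sketch your outline is reasonable in spirit --- ultraproduct/Loeb as a first approximation, Gaifman locality, low tree-depth colourings, then assembly --- and you correctly identify the crux (projections of Borel sets need not be Borel, so the relational-sample-space axiom is the real obstruction). But the sketch is not a proof, and the gaps are exactly the ones that make the conjecture hard. First, low tree-depth colourings give, for each fixed $p$, a colouring such that any $p$ colour classes induce a graph of tree-depth $\leq p$; they do \emph{not} reduce the whole structure to bounded tree-depth, and the number of colours, though subpolynomial, is unbounded. Your ``diagonalizing over the resolution $(r,q)$'' is a hope, not an argument: one must show that the modelings built at different resolutions are coherent and that the limit object is a single relational sample space, not merely a compatible system of marginals. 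Second, a projective limit of standard Borel spaces is standard Borel only under restrictive conditions (e.g.\ surjective bonding maps), and even then there is no general mechanism ensuring that \emph{first-order definable} subsets of the limit are Borel --- Kolmogorov consistency yields a measure, not measurability of definable sets. Third, the Adler--Adler stability/NIP result is used in \cite{NPOM1arxiv} for the \emph{converse} direction (Theorem~\ref{thm:modnd}: modeling limits $\Rightarrow$ nowhere dense); it is not known to supply the missing measurability in the forward direction.

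In short: you have outlined a plausible strategy and honestly flagged its weakest link, but what remains is precisely the content of the conjecture, and the paper makes no claim to have resolved it.
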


By a combination of model theory and functional analysis methods,
we have been able to prove some particular cases: classes of graphs with bounded degree, classes of structures with bounded tree-depth~\cite{NPOM1arxiv}, as well as the class of all forests~\cite{NPOM2arxiv} admit modeling limits. In this paper, we sketch a generalization  to any class of graphs where the maximum degree is bounded by a function of the girth (or unbounded if the graph is acyclic).
But before, in the next section, we outline an even more general approach (pointing to a possible non-commutative version).
\section{Abstract Analytic Framework}
\label{sec:ana}
We introduced in~\cite{CMUC} 
the notion of first-order limits of structures 
as a generalization and unified treatment of various
 notions of limits. This notions also provided limit objects for new ``intermediate classes''. However 
 first-order limits are interesting on their own. Particularly, as we show here, they display a rich spectrum of interconnection to
the functional
analysis approach to quantum theory~\cite{Engesser2009}.
For instance, the connection to  $C^*$-algebras makes more natural the introduction of several spaces and constructions, whose appearance in our framework may have earlier seem a little mysterious.

The formalism we adopt here is indeed close to the one of quantum theory, specially when considering significant objects and spaces.
This aspect will be developed in this section, where we consider our approach to structure limits under the lights of statistical and quantum physics, as formalized by functional analysis.

Let us recall some basics:
The set of all possible states of a physical system is the {\em phase space} of the system. In statistical physics, a {\em statistical ensemble} is the
  phase space  of a physical system together with a method of averaging physical quantities, called {\em observables}, related to this system.
In a classical system with phase space $\Omega$, the observable quantities are real functions defined on $\Omega$, and they are averaged by integration with respect to a certain probability measure $\mu$ on $\Omega$. In a quantum system described by vectors in a Hilbert space $\mathcal H$, the observable quantities are defined by self-adjoint operators acting on $\mathcal H$, and are averaged using a certain positive, normalized functional $\rho$, defined on the algebra $\mathfrak A(\mathcal H)$ of operators on  $\mathcal H$ (such functionals on $\mathcal H$ are called {\em states}) \cite{Minlos2002}.

\subsection{$C^*$-algebras}
A new unifying view consists in considering as a primary object the $C^*$-algebra $\mathfrak A$ of observables. Recall that a $C^*$-algebra $\mathfrak A$ is a Banach algebra with an involution $x\mapsto x^*$, such that the relation $\|x^*x\|=\|x\|^2$ holds for any element $x\in\mathfrak A$.
The two standard examples of $C^*$-algebras are the following (see \cite{Gelfand1943} for the standard text):

First example is the space $C_0(X)$ of all continuous complex-valued functions which vanish at infinity on a locally compact Hausdorff space $X$, equipped with the uniform norm $\|f\|=\sup_{x\in X}|f(x)|$, and involution defined as the complex-conjugate: $f^*(x)=\overline{f(x)}$. Gelfand's representation theorem states that every commutative $C^*$-algebra $\mathfrak A$ is isometrically $*$-isomorphic to the algebra $C_0(\Phi_{\mathfrak{A}})$, where $\Phi_{\mathfrak{A}}$ is the topological space of all the {\em characters} of $\mathfrak{A}$ (that is: of the non-zero homomorphisms $f:\mathfrak{A}\rightarrow\bbbc$) equipped with the relative weak-$*$ topology.
Given $a\in \mathfrak A$, one defines the {\em Gelfand transform} $\widehat{a}$ of $a$, that is the
function $\widehat{a}:\Phi_{\mathfrak A}\to{\mathbb C}$ by $\widehat{a}(f)=f(a)$. Then the map $a\mapsto \widehat{a}$ defines a norm-decreasing, unit-preserving algebra homomorphism from $\mathfrak A$ to $C_0(\Phi_{\mathfrak{A}})$, which is the {\em Gelfand representation} of $\mathfrak A$.

The second standard example is the algebra $\mathfrak B(\mathcal H)$ of all bounded linear operators on a Hilbert space $\mathcal H$, where involution is defined as the adjoint operator, and norm is defined as the operator norm. Gelfand--Naimark theorem states that
any $C^*$-algebra is isometrically and symmetrically isomorphic to a $C^*$-subalgebra of some $C^*$-algebra of the form  $\mathfrak B(\mathcal H)$ .

In our (discrete) setting, we shall be  interested in the specific case where the algebra $\mathfrak A$ of observables is {\em approximately finite} \cite{Bratteli1972}, that is in the case where there exists an increasing sequence
$\mathfrak A_1\subseteq \mathfrak A_2\subseteq \dots\subseteq \mathfrak A$ of finite-dimensional sub-$C^*$-algebras of $\mathfrak A$ such that the union $\bigcup_{j}\mathfrak A_j$ is norm-dense in $\mathfrak A$. Approximately finite $C^*$-algebras (or  {\em AF $C^*$-algebras}) have been fully classified by Elliott \cite{Elliott197629} using $K$-theory for $C^*$-algebras.
In particular, a $C^*$-algebra $\mathfrak A$ is  commutative and approximately finite if and only if it is the algebra of continuous functions on the Stone space of a Boolean algebra (follows from \cite[Proposition 3.1]{Bratteli1974192} and Stone duality). 
This very particular case corresponds to the framework we used to study structural limits. 
Fig.~\ref{fig:std} describes the different mathematical objects entering this grand picture.

\begin{figure}[h!bt]
\begin{center}
\includegraphics[width=\textwidth]{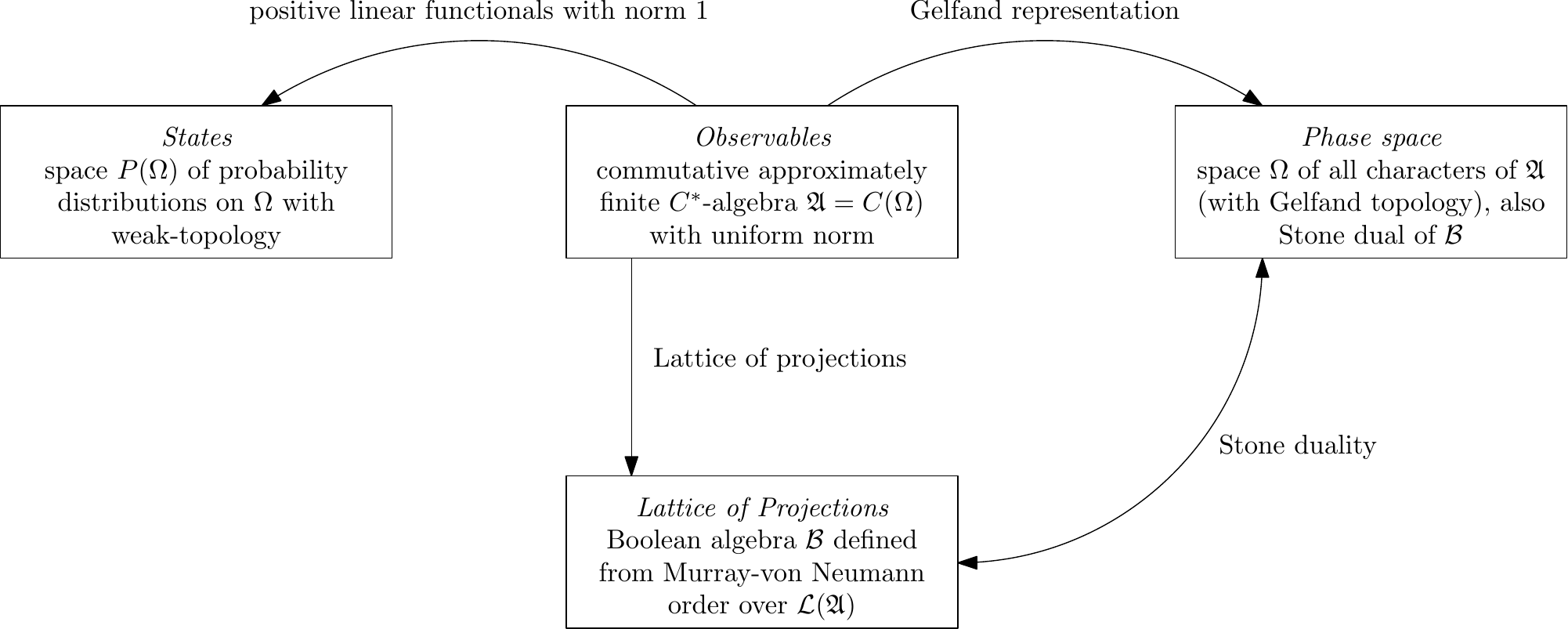} 
\end{center}
\caption{The different spaces in the standard case}
\label{fig:std}
\end{figure}

In this context, let us
recall that to each Boolean algebra 
$\mathcal B$ is associated a topological space, denoted  $S(\mathcal B)$, called the {\em Stone space} of $\mathcal B$, which is a totally disconnected compact Hausdorff space. 
The points in $S(\mathcal B)$ are the ultrafilters on $\mathcal B$, or equivalently the homomorphisms from $\mathcal B$ to the two-element Boolean algebra. 
The mapping $K:\mathcal{B}\rightarrow 2^{S(\mathcal{B})}$ defined by
$$K(\phi)=\{T\in S(\mathcal{B}: \phi\in T\}$$
gives a one-to-one correspondence between elements of $\mathcal{B}$ and  clopen subsets of $S(\mathcal{B})$.
The topology on $S(\mathcal B)$ is generated by a basis consisting of all (clopen) sets of the form
$\{ x \in S(B) \mid b \in x\}$, where $b$ is an element of $B$.
For every Boolean algebra $\mathcal B$, $S(\mathcal B)$ is a compact totally disconnected Hausdorff space, and
 Stone representation theorem \cite{Stone1936} states that every 
Boolean algebra $\mathcal B$ is isomorphic to the algebra of clopen subsets of its Stone space $S(\mathcal B)$. 
 Note that the Stone dual of a countable Boolean algebra is a compact Polish space.
The similarity of Stone duality with Gelfand representation is not surprising, if one considers the motivation that led Stone to this theorem:
\begin{quote}
``The writer's interest in the subject, for example, arose in connection with the spectral theory of symmetric transformations in Hilbert space and certain related properties of abstract integrals.'' (M.H. Stone \cite{Stone1936})
\end{quote}

\subsection{Structural Limits}
In the context of structural limits, the general picture
of Fig.~\ref{fig:std} takes more concrete form depicted on 
Fig.~\ref{fig:FO}. This is a refinement of the approach the authors presented in \cite{CMUC,NPOM1arxiv,NPOM2arxiv}.

\begin{figure}[htb]
\begin{center}
\includegraphics[width=\textwidth]{} 
\end{center}
\caption{The spaces considered in the study of structural limits}
\label{fig:FO}
\end{figure}

Let us explain the role of the different spaces entering this scheme:
We consider the class ${\rm FO}(\sigma)$ of all first-order formulas constructed with equality and the symbols in the considered signature $\sigma$, and the
 Boolean algebra $\mathcal B$ defined as the Lindenbaum-Tarski algebra of ${\rm FO}(\sigma)$, that is the Boolean algebra of the  classes of these formulas with respect to logical equivalence.
 For instance, in the case of undirected graphs, formulas are constructed from the symbols of equality and adjacency, as well as standard connectives (conjunctions, disjunction, negation) and quantification over vertices.

The Stone space $S(\mathcal B)$ associated to $\mathcal B$ is a topological space, whose topology is generated by clopen subsets, which correspond exactly to first-order formulas (up to logical equivalence), that is to elements of $\mathcal B$. A point in $S(\mathcal B)$ corresponds to a maximal set of consistent formulas.

As to each formula $\phi$ is associated a clopen subset $K(\phi)$ of $S(\mathcal B)$, the indicator functions $\mathbf 1_{K(\phi)}$ of clopen subsets of  
$S(\mathcal B)$ are continuous functions from $S(\mathcal B)$ to $\mathbb R$.
An algebra $\mathfrak A$ can be constructed from $\mathcal B$ as the algebra
of formal finite real linear combinations of elements of $\mathcal B$ quotiented
by the relation $\phi+\psi=\phi\wedge\psi+\phi\vee\psi$. Each such finite linear combination can be written as a sum $x=\sum_{i=1}^n \alpha_i\phi_i$ where
$\phi_i\wedge\phi_j=0$ whenever $i\neq j$. It turns out that $\|x\|=\max_i |\alpha_i|$ is independent of the chosen decomposition, and defines a norm on $\mathfrak A$ (this norm is nothing but the $\|\,\|_\infty$ norm on $C(S(\mathcal B))$). 
The completion of $\mathfrak A$ for this norm is the Banach algebra $C(S(\mathcal B))$ of all continuous functions from $S(\mathcal{B})$ to $\mathbb{R}$.
(Note that if we consider for $\mathfrak A$ complex linear combinations, we similarly construct the $C^*$-algebra of all complex valued functions on $S(\mathcal B)$.)

Finite $\sigma$-structures embed injectively in the space  of additive functions
on the Boolean algebra $\mathcal B$ by $\mathbf A\mapsto\langle\,\cdot\,,\mathbf A\rangle$, where 
$$
\langle\phi,\mathbf A\rangle=\frac{|\{(v_1,\dots,v_p)\in A^p:\ \mathbf A\models\phi(v_1,\dots,v_p)\}|}{|A|^p}.
$$
The following fact is worth noticing:
\begin{fact} 
There is a one-to-one correspondence between
probability measures $\mu$ on $S(\mathcal{B})$ and additive functions 
$f_\mu:\mathcal{B}\rightarrow[0,1]$ such that $f(1)=1$, with the property that for every probability measure $\mu$ on $S(\mathcal{B})$ and every $\phi\in\mathcal{B}$ it holds
$$f_\mu(\phi)=\mu(K(\phi)).$$
\end{fact}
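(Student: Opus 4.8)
The plan is to identify both sides of the stated correspondence with finitely additive probability contents on the algebra $\mathcal A$ of clopen subsets of $S(\mathcal B)$, and to promote such a content to a genuine $\sigma$-additive Borel probability measure by combining compactness of the Stone space with the Carath\'eodory extension theorem.

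\emph{From measures to additive functions, and injectivity.} Given a Borel probability measure $\mu$ on $S(\mathcal B)$, set $f_\mu(\phi)=\mu(K(\phi))\in[0,1]$. Since $K$ is a Boolean isomorphism of $\mathcal B$ onto $\mathcal A$ (Stone duality), it sends $1\mapsto S(\mathcal B)$, $0\mapsto\emptyset$, $\phi\vee\psi\mapsto K(\phi)\cup K(\psi)$ and $\phi\wedge\psi\mapsto K(\phi)\cap K(\psi)$; hence $f_\mu(1)=1$ and $f_\mu(\phi\vee\psi)+f_\mu(\phi\wedge\psi)=f_\mu(\phi)+f_\mu(\psi)$, so $f_\mu$ is additive. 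For injectivity, note that $\mathcal A$ is a $\pi$-system (closed under finite intersection) generating the Borel $\sigma$-algebra of $S(\mathcal B)$ --- this uses that the clopen sets form a basis and that $S(\mathcal B)$ is second countable, which holds in the case of interest where $\sigma$ is at most countable (so $S(\mathcal B)$ is compact metrizable). If $f_{\mu_1}=f_{\mu_2}$, then $\mu_1$ and $\mu_2$ agree on $\mathcal A$, hence on all Borel sets by Dynkin's $\pi$--$\lambda$ theorem.

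\emph{Surjectivity.} Let $f\colon\mathcal B\to[0,1]$ be additive with $f(1)=1$; thus $f$ is a finitely additive probability content ($f(0)=0$, and $f(\phi\vee\psi)=f(\phi)+f(\psi)$ whenever $\phi\wedge\psi=0$). Transport it along $K^{-1}$ to a set function $\mu_0$ on $\mathcal A$, $\mu_0(K(\phi))=f(\phi)$; this is a finitely additive probability content on the algebra $\mathcal A$. The crucial point is that $\mu_0$ is automatically $\sigma$-additive on $\mathcal A$: if $A=\bigsqcup_{n\ge 1}A_n$ with $A$ and the $A_n$ in $\mathcal A$ and pairwise disjoint, then $\{A_n\}_n$ is an open cover of the \emph{compact} set $A$, so $A_n=\emptyset$ for all but finitely many $n$, and $\sigma$-additivity reduces to finite additivity. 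Thus $\mu_0$ is a premeasure, and by the Hahn--Kolmogorov (Carath\'eodory) extension theorem it extends to a measure $\mu$ on $\sigma(\mathcal A)$, i.e.\ on the Borel $\sigma$-algebra; as $\mu(S(\mathcal B))=\mu_0(K(1))=f(1)=1$, $\mu$ is a probability measure and $f_\mu=f$ by construction.

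\emph{Where the difficulty lies.} Everything is routine except for the compactness phenomenon in the surjectivity step: it is exactly what turns a merely finitely additive content on $\mathcal B$ into a $\sigma$-additive premeasure, and it is the only place where the topology of $S(\mathcal B)$ --- and not just the Boolean algebra $\mathcal B$ --- is used. The only bookkeeping issue is the distinction between Borel and Baire $\sigma$-algebras on a non-metrizable Stone space, which is vacuous in the relevant (countable) case. A parallel route, closer to the $C^*$-algebra viewpoint of this section, is to extend an additive $f$ by linearity to a positive functional of norm $1$ on the dense subalgebra $\mathfrak A\subseteq C(S(\mathcal B))$ of simple clopen functions, hence to a state on $C(S(\mathcal B))$; the Riesz representation theorem then identifies this state with a Radon probability measure on $S(\mathcal B)$, and the Fact is the restriction of that identification to the indicators $\mathbf 1_{K(\phi)}$.
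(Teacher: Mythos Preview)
Your proof is correct and takes a genuinely different route from the paper's. The paper argues via functional analysis: it shows that the linear span $V(\mathcal B)$ of the indicator functions $\mathbf 1_{K(\phi)}$ separates points of $S(\mathcal B)$, invokes Stone--Weierstrass to conclude this span is dense in $C(S(\mathcal B))$, and then appeals to the Riesz representation theorem to identify probability measures with positive normalized functionals on $C(S(\mathcal B))$, which are in turn determined by their values on the generators $\mathbf 1_{K(\phi)}$. You instead work directly at the level of set functions: transport the additive function to a finitely additive content on the clopen algebra, use compactness to get $\sigma$-additivity for free (any countable clopen partition of a clopen is finite), and then extend by Carath\'eodory. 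Amusingly, you sketch essentially the paper's own argument in your final paragraph as ``a parallel route, closer to the $C^*$-algebra viewpoint''.

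Each approach has its merits. The paper's Riesz-based argument is indifferent to the cardinality of $\mathcal B$: it yields Radon measures on an arbitrary compact Hausdorff Stone space, and uniqueness comes from density of $V(\mathcal B)$ in $C(S(\mathcal B))$ rather than from $\sigma(\mathcal A)$ coinciding with the Borel $\sigma$-algebra; it also dovetails with the $C^*$-algebraic narrative of the surrounding section. Your argument is more elementary and self-contained---it avoids the Riesz machinery entirely---but, as you correctly flag, both your injectivity step (via $\pi$--$\lambda$) and your surjectivity step (Carath\'eodory lands on $\sigma(\mathcal A)$, not a priori on all Borel sets) rely on second countability of $S(\mathcal B)$. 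That is harmless in the paper's setting of an at most countable signature, so no gap arises.
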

This fact, which we proved in \cite{CMUC} as a lemma, is actually known
in the more general setting of MV-algebras \cite{Kroupa2006,Panti2008}.
We give here a short proof for the sake of completeness:
\begin{proof}
The vector space $V(\mathcal{B})$ generated by the indicator functions $\mathbf 1_{K(\phi)}$ for $\phi\in\mathcal{B}$  forms a subalgebra of $C(S(\mathcal{B}))$. The subalgebra $V(\mathcal{B})$ separates points of $S(\mathcal{B})$: if $T_1,T_2\in S(\mathcal{B})$,
then there exists $\phi\in T_1\setminus T_2$ thus 
$\mathbf 1_{K(\phi)}(T_1)\neq \mathbf 1_{K(\phi)}(T_2)$.
As the constant $1$ function (corresponding to the maximum of the Boolean algebra) belongs to $V(\mathcal{B})$, the subalgebra $V(\mathcal{B})$ is dense in $C(S(\mathcal{B}))$, according to Stone--Weierstrass theorem. 
As $S(\mathcal{B})$ is a Radon-space, probability measures $\mu$ on $S(\mathcal{B})$ correspond
(in a one-to-one correspondence) to positive functionals $F_\mu$ on $C(S(\mathcal{B}))$ such that  $F_\mu(1)=1$ (according to Riesz representation theorem). As $V(\mathcal{B})$ is dense in $C(S(\mathcal{B}))$, every positive functional on $C(S((\mathcal{B}))$ is uniquely determined by its restriction on $V(\mathcal{B})$, and conversely every positive functional on $V(\mathcal{B})$ uniquely extend (by continuity) to a positive functional on 
$C(S((\mathcal{B}))$. Moreover, a norm $1$ positive functional on $V(\mathcal{B})$ is uniquely determined by its values on the generating set $\{\mathbf 1_{K(\phi)}: \phi\in\mathcal{B}\}$, that is by the function $f_\mu:\mathcal{B}\rightarrow [0,1]$
defined by $f_\mu(\phi)=\mu(K(\phi))$. It is easily checked that $f_\mu$ satisfies $f_\mu(1)=1$ and
$$\phi\wedge\psi=0 \quad\Longrightarrow\quad f_\mu(\phi\vee\psi)=f_\mu(\phi)+f_\mu(\psi),$$
and that every function $f$ satisfying these inequalities uniquely defines a norm $1$ positive functional on $V(\mathcal{B})$, hence a measure $\mu$ on $S(\mathcal{B})$, such that $f=f_\mu$.
\end{proof}

It follows that finite $\sigma$-structures embed injectively in the space 
$P(S(\mathcal{B}))$ of probability distributions on $S(\mathcal{B})$: to a structure $\mathbf A$ with domain $A$ we associate the unique measure $\mu_{\mathbf A}$ such that for every $\phi\in{\rm FO}(\sigma)$ with free variables $x_1,\dots,x_p$ it holds
$$
\int_{S(\mathcal B)}\mathbf 1_{K(\phi)}\,{\rm d}\mu_{\mathbf A}^{\otimes p} =
\langle\phi,\mathbf A\rangle.
$$
Then, a sequence $(\mathbf A_n)_{n\in\bbbn}$ is ${\rm FO}$-convergent
if and only if the probability measures $\mu_{\mathbf{A}_n}$ converge weakly.

More can be said when considering convergence of finite $\sigma$-structures.
The group $S_\omega$ of permutations of $\bbbn$ acts naturally on ${\rm FO}(\sigma)$ by permuting the free variables: for a formula $\phi$ with free variables $x_{i_1},\dots,x_{i_p}$ and for a permutation $\tau$, the formula $\tau\cdot\phi$ with free variables $x_{j_1},\dots,x_{j_p}$
where $j_k=\tau(i_k)$ is defined by
 $$
 (\tau\cdot\phi)(x_{j_1},\dots,x_{j_p}):=
 \phi(x_{i_1},\dots,x_{i_p}).
 $$
 It follows that $S_\omega$ acts on the Boolean algebra $\mathcal{B}$, and the
 action $S_\omega\curvearrowright\mathcal B$ defines an action
  $S_\omega\curvearrowright S(\mathcal B)$ on the Stone space of $\mathcal B$. As it is clear that for every $\sigma$-structure $\mathbf A$, every permutation $\tau$, and every first-order formula $\phi$ it holds
$$
\langle\tau\cdot\phi,\mathbf A\rangle=\langle\phi,\mathbf A\rangle.
$$
Hence if a sequence $(\mathbf A_n)_{n\in\bbbn}$ is ${\rm FO}$-convergent, then the measures $\mu_{\mathbf A_n}$ converge weakly to some
$S_\omega$-invariant  probability measure on $S(\mathcal{B})$.
More generally, every injection $f:\bbbn\rightarrow \bbbn$ defines a transformation $P_f:S(\mathcal B)\rightarrow S(\mathcal B)$, which is continuous, which is  a homeomorphism if (and only if) $f$ is bijective (that is a permutation on $\bbbn$). The set of the transformations $P_f$ has a monoid structure, as
$P_f\circ P_g=P_{f\circ g}$. Every measure $\mu$ that is obtained either from a finite structure or as weak limit of measures associated to finite structures is invariant under every $P_f$. Hence $(S(\mathcal B),\Sigma_{S(\mathcal B)},\mu,P)$ is a measure preserving dynamical system, where $\Sigma_{S(\mathcal B)}$ is the Borel $\sigma$-algebra of $S(\mathcal B)$ and $P$ is the monoid of transformations indexed by injections $f:\bbbn\rightarrow \bbbn$.

We summarize the situation as follows:
\begin{theorem}
\label{thm:1}
Let $\sigma$ be an at most countable signature, let $X\subseteq {\rm FO}(\sigma)$ be a fragment of first-order logic, let $\mathcal B$ be the Lindenbaum-Tarski algebra of $X$, let $S$ be the Stone dual of $\mathcal B$, and let $P(S)$ be the space of probability distributions on $S$. Then:

To each $\sigma$ structure $\mathbf A$ corresponds a measure
$\mu_{\mathbf{A}}$ such that for every formula $\phi\in X$ with free variables in $\{x_1,\dots,x_p\}$ it holds
$$\int_{S}\mathbf 1_{K(\phi)}\,{\rm d}\mu_{\mathbf{A}}
=\langle\phi,\mathbf{A}\rangle;$$

A sequence $(\mathbf{A}_n)_{n\in\bbbn}$ of $\sigma$-structures is $X$-convergent if and only if the associated measures $\mu_{\mathbf{A}_n}$ converge weakly to some measure $\mu\in P(S)$. In such a case, for every 
formula $\phi\in X$ with free variables in $\{x_1,\dots,x_p\}$ it holds
$$
\lim_{n\rightarrow\infty}\langle\phi,\mathbf{A}_n\rangle=\int_{S}\mathbf 1_{K(\phi)}\,{\rm d}\mu.
$$
Moreover, if $\Gamma$ is a group of automorphisms of $\mathcal B$ (for instance the group of permutations of the free variables), then $\Gamma$ acts naturally on $S$; if all the measures $\mu_{\mathbf{A}}$ associated to finite $\sigma$-structures are $\Gamma$-invariant, then so are all the measures obtained as weak limits of measures associated to finite $\sigma$-structures.
\end{theorem}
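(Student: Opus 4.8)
The plan is to deduce the whole statement from the Fact just proved --- the bijection between probability measures $\mu$ on $S=S(\mathcal B)$ and additive functions $f\colon\mathcal B\to[0,1]$ with $f(1)=1$, satisfying $f(\phi)=\mu(K(\phi))$ --- together with the density of $V(\mathcal B)=\mathrm{span}\{\mathbf 1_{K(\phi)}:\phi\in\mathcal B\}$ in $C(S)$ established in that proof. For the first assertion I would first check that $\phi\mapsto\langle\phi,\mathbf A\rangle$ is well defined on $\mathcal B$: logically equivalent formulas have identical satisfying sets in $\mathbf A$, and enlarging the list of free variables from $x_1,\dots,x_p$ to $x_1,\dots,x_q$ multiplies both the numerator and the denominator of \eqref{eq:sp} by $|A|^{q-p}$, so the value does not depend on the chosen list. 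Then I verify the two hypotheses of the Fact for $f=\langle\,\cdot\,,\mathbf A\rangle$: $\langle x_1{=}x_1,\mathbf A\rangle=1$, and if $\phi\wedge\psi=0$ then, reading $\phi$ and $\psi$ over a common list of free variables, the sets $\phi(\mathbf A)$ and $\psi(\mathbf A)$ are disjoint with union $(\phi\vee\psi)(\mathbf A)$, so that $\langle\phi\vee\psi,\mathbf A\rangle=\langle\phi,\mathbf A\rangle+\langle\psi,\mathbf A\rangle$. The Fact then produces a unique probability measure $\mu_{\mathbf A}$ on $S$ with $\mu_{\mathbf A}(K(\phi))=\langle\phi,\mathbf A\rangle$, i.e.\ $\int_S\mathbf 1_{K(\phi)}\,\mathrm d\mu_{\mathbf A}=\langle\phi,\mathbf A\rangle$.

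For the convergence equivalence, one direction is immediate: if $\mu_{\mathbf A_n}\to\mu$ weakly then, since $K(\phi)$ is clopen and hence $\mathbf 1_{K(\phi)}\in C(S)$, we get $\langle\phi,\mathbf A_n\rangle=\int_S\mathbf 1_{K(\phi)}\,\mathrm d\mu_{\mathbf A_n}\to\int_S\mathbf 1_{K(\phi)}\,\mathrm d\mu$ for every $\phi\in X$, which is $X$-convergence together with the asserted value of the limit. Conversely, if $(\mathbf A_n)$ is $X$-convergent then $f(\phi):=\lim_n\langle\phi,\mathbf A_n\rangle$ exists for every $\phi\in\mathcal B$ and is again additive, normalized, and $[0,1]$-valued, since these relations pass to pointwise limits; the Fact yields a probability measure $\mu$ with $\mu(K(\phi))=f(\phi)$. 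It then remains to upgrade convergence against clopen indicators to weak convergence. For $g=\sum_i\alpha_i\mathbf 1_{K(\phi_i)}\in V(\mathcal B)$ linearity gives $\int_S g\,\mathrm d\mu_{\mathbf A_n}\to\int_S g\,\mathrm d\mu$; since $V(\mathcal B)$ is dense in $C(S)$ and all the $\mu_{\mathbf A_n}$ and $\mu$ are probability measures (so $|\int h\,\mathrm d\nu-\int g\,\mathrm d\nu|\le\|h-g\|_\infty$ for each of them), an $\varepsilon/3$ approximation gives $\int_S h\,\mathrm d\mu_{\mathbf A_n}\to\int_S h\,\mathrm d\mu$ for every $h\in C(S)$, i.e.\ weak convergence. (Alternatively one can invoke weak-$*$ compactness of $P(S)$ and the uniqueness clause of the Fact to identify every cluster point of $(\mu_{\mathbf A_n})$ with $\mu$.)

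For the last assertion, an automorphism $\gamma$ of $\mathcal B$ maps ultrafilters to ultrafilters and so induces $\hat\gamma\colon S\to S$; it is a homeomorphism because $\hat\gamma^{-1}(K(\phi))=K(\gamma^{-1}\phi)$ is clopen and $\widehat{\gamma^{-1}}$ is its inverse. The pushforward $\hat\gamma_*\nu$ of a probability measure is a probability measure whose associated additive function is $\phi\mapsto\nu(\hat\gamma^{-1}K(\phi))=\nu(K(\gamma^{-1}\phi))$, so by uniqueness in the Fact $\hat\gamma_*\nu=\nu$ iff $\nu(K(\gamma^{-1}\phi))=\nu(K(\phi))$ for all $\phi$; applied to $\nu=\mu_{\mathbf A}$ this translates $\Gamma$-invariance of $\mu_{\mathbf A}$ into $\langle\gamma\cdot\phi,\mathbf A\rangle=\langle\phi,\mathbf A\rangle$ for all $\gamma,\phi$ (for $\Gamma=S_\omega$ this is the identity noted before the theorem). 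If all $\mu_{\mathbf A}$ are $\Gamma$-invariant and $\mu$ is a weak limit of such measures, then for every $\gamma$ and $\phi$ one has $\mu(K(\gamma\cdot\phi))=\lim_n\langle\gamma\cdot\phi,\mathbf A_n\rangle=\lim_n\langle\phi,\mathbf A_n\rangle=\mu(K(\phi))$, whence $\hat\gamma_*\mu=\mu$.

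There is no deep obstacle: the substance is entirely contained in the Fact and in elementary functional analysis. The only points needing care are the well-definedness of $\langle\phi,\mathbf A\rangle$ on the Lindenbaum--Tarski algebra together with its independence of the list of free variables, and the passage from testing convergence against the clopen indicators $\mathbf 1_{K(\phi)}$ to genuine weak convergence in $C(S)$; I expect this last approximation step --- using density of $V(\mathcal B)$ and the uniform bound on the masses --- to be the most delicate piece of bookkeeping.
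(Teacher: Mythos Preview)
Your proposal is correct and follows essentially the same route as the paper: the paper itself does not spell out a proof of Theorem~\ref{thm:1} but says it is an easy modification of the argument in \cite{CMUC}, the core of which is precisely the Fact (the bijection between probability measures on $S(\mathcal B)$ and normalized additive functions on $\mathcal B$) together with the Stone--Weierstrass density of $V(\mathcal B)$ in $C(S)$. Your verification that $\langle\,\cdot\,,\mathbf A\rangle$ is well defined and additive on $\mathcal B$, the $\varepsilon/3$ upgrade from convergence on clopen indicators to weak convergence, and the treatment of the $\Gamma$-action via pushforwards and the uniqueness clause of the Fact are exactly the details one needs to supply, and they are all sound.
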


The proof is an easy modification of the proof given in \cite{CMUC}, where we proved Theorem~\ref{thm:1} without the appendix on group.
\subsection{Limits with respect to Parameters}
The above abstract setting can be combined with ideas underlying left limits of graphs (as explained in \cite{LovaszBook}) and, particularly, by property testing.
The universal framework, complementing the approach of \cite{CMUC}, and using some of the above theory, is to build a notion of limits as follows:

We consider  a set
 $\mathcal O$, whose elements we shall call
{\em objects}, and a set $\mathcal F$  of bounded mappings $f:\mathcal{O}\rightarrow\bbbr$, which we call {\em parameters}.
Then 
we define the notion convergence of a sequence of objects with respect to the family of parameters.

Because we are looking for a notion of limits with respect with the mappings in $\mathcal{F}$,
we consider on $\mathcal O$
the {\em initial topology} with respect to $\mathcal F$, that is the coarsest topology that makes every mapping $f\in\mathcal F$ continuous. Note that $\mathcal{O}$ is not Hausdorff in general.
Then the  space
$C_b(\mathcal O,\bbbr)$ of bounded continuous functions on
$\mathcal O$ with sup norm 
$\|f\|=\sup_{O\in\mathcal{O}}|f(O)|$
is a commutative Banach algebra (that can be extended to a commutative $C^*$-algebra).
According to Gelfand representation theorem, $C_b(\mathcal{O},\bbbr)$ is isometrically isomorphic to the algebra of continuous functions on a compact Hausdorff space, which is known to be homeomorphic to the Stone-\v Cech compactification $\beta\mathcal{O}$ of $\mathcal{O}$.
Recall that the
the Stone-\v Cech compactification $\beta\mathcal{O}$ of 
$\mathcal{O}$ is the largest compact Hausdorff space generated by $\mathcal{O}$, in the sense that any map from $\mathcal{O}$ to a compact Hausdorff space factors (in a unique way) through $\beta\mathcal{O}$.

Let ${\rm Profile}:\mathcal{O}\rightarrow [0,1]^{\mathcal{F}}$ denote the mapping defined by
$${\rm Profile}(O)=(f(O))_{f\in\mathcal F},$$
and let $\sim$ be the equivalence relation
$$O_1\sim O_2\quad\iff {\rm Profile}(O_1)={\rm Profile}(O_2).$$

Then $K\mathcal O=\mathcal O/\sim$ is the Kolmogorov quotient of $O$,
 and  $\beta\mathcal{O}$ is homeomorphic to the closure of $\{{\rm Profile}(O): O\in\mathcal{O}\}$ in
$[0,1]^\mathcal{F}$. 

This leads to the following natural notion of convergence of a sequence of objects of $\mathcal O$ with respect to parameters in $\mathcal{F}$:
\begin{definition} 
A sequence
$(O_n)_{n\in\bbbn}$ is {\em $\mathcal F$-convergent} if, for every
$f\in\mathcal F$ the sequence $(f(O_n))_{n\in\bbbn}$ is convergent.
\end{definition}
Note that the limit of an $\mathcal F$-convergent sequence of objects in $\mathcal O$ is uniquely defined as a point of $\beta\mathcal{O}$. 

If $P\in\mathbb{R}[x_1,\dots,x_p]$ is a real polynomial
and $f_1,\dots,f_p\in\mathcal F$, we denote by 
$P(f_1,\dots,f_p)$ the mapping from $\mathcal O$ to $\mathbb{R}$ defined by
$$
P(f_1,\dots,f_p): O\mapsto P(f_1(O),\dots,f_p(O)).
$$
Let $A(\mathcal{O},\mathcal{F})$ be the sub-algebra of $C_b(\mathcal O)$ formed by above mappings.
Stone-Weierstrass theorem on uniform approximation of continuous functions by polynomials extends in this setting (and this is a folklore):
\begin{lemma}
The sub-algebra $A(\mathcal{O},\mathcal{F})$  is dense in $C_b(\mathcal O)=C(\beta\mathcal{O})$.
\end{lemma}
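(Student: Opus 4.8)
The plan is to deduce the statement from the Stone--Weierstrass theorem applied to the compact Hausdorff space $\beta\mathcal{O}$. Via the isometric algebra isomorphism $C_b(\mathcal{O})\cong C(\beta\mathcal{O})$ recalled above, the set $A(\mathcal{O},\mathcal{F})$ is carried onto a linear subspace of $C(\beta\mathcal{O})$, so it is enough to prove that this subspace is a unital subalgebra that separates the points of $\beta\mathcal{O}$; the real form of Stone--Weierstrass then yields density at once. I should first note that each mapping $P(f_1,\dots,f_p)$ genuinely lies in $C_b(\mathcal{O})$: it is continuous because every $f_i$ is continuous for the initial topology on $\mathcal{O}$ and $P$ is continuous, and it is bounded because each $f_i$ maps $\mathcal{O}$ into a compact interval and $P$ is bounded on the resulting box.

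First I would check the algebraic requirements. Constant polynomials show that $A(\mathcal{O},\mathcal{F})$ contains all constants, so it is unital. If $P(f_1,\dots,f_p)$ and $Q(g_1,\dots,g_q)$ lie in $A(\mathcal{O},\mathcal{F})$, then, viewing $P+Q$ and $PQ$ as polynomials in the $p+q$ variables $x_1,\dots,x_p,y_1,\dots,y_q$, their sum and product are again of the form $R(f_1,\dots,f_p,g_1,\dots,g_q)\in A(\mathcal{O},\mathcal{F})$; hence $A(\mathcal{O},\mathcal{F})$ is closed under addition and multiplication and is a subalgebra of $C_b(\mathcal{O})$. Note also that $\mathcal{F}\subseteq A(\mathcal{O},\mathcal{F})$, taking $P(x)=x$.

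Next, and this is the only step carrying real content, I would establish separation of points, using the description obtained just before the lemma: $\beta\mathcal{O}$ is homeomorphic, via the continuous extension of ${\rm Profile}$, to the closure $\overline{\{{\rm Profile}(O):O\in\mathcal{O}\}}$ inside $[0,1]^{\mathcal{F}}$. Under this identification, the extension $\widetilde f$ of a parameter $f\in\mathcal{F}$ is exactly the restriction to that closure of the coordinate projection $\pi_f\colon[0,1]^{\mathcal{F}}\to[0,1]$ --- both are continuous and agree on the dense set ${\rm Profile}(\mathcal{O})$, hence coincide. Since the family of coordinate projections $(\pi_f)_{f\in\mathcal{F}}$ separates the points of any subset of $[0,1]^{\mathcal{F}}$, the family $(\widetilde f)_{f\in\mathcal{F}}$ separates the points of $\beta\mathcal{O}$; and as $\mathcal{F}\subseteq A(\mathcal{O},\mathcal{F})$, so does $A(\mathcal{O},\mathcal{F})$.

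At that point Stone--Weierstrass gives that $A(\mathcal{O},\mathcal{F})$ is dense in $C(\beta\mathcal{O})=C_b(\mathcal{O})$, which is the claim. I expect the main (indeed essentially the only) obstacle to be the identification of $\widetilde f$ with a coordinate projection: this rests on the fact that the profile map induces a \emph{homeomorphism} of $\beta\mathcal{O}$ onto the closure of its image, equivalently that the parameters in $\mathcal{F}$ ``generate'' $C_b(\mathcal{O})$ at the level of the Stone--\v Cech compactification. I would simply quote this from the discussion preceding the lemma. Should a self-contained argument be wanted, one would instead show directly that every $g\in C_b(\mathcal{O})$ is constant on the fibres of ${\rm Profile}$ --- because the initial topology admits a basis of sets of the form $\bigcap_{i} f_i^{-1}(U_i)$ with $f_i\in\mathcal{F}$ and $U_i$ open in $\mathbb{R}$ --- so that $g$ factors as $\bar g\circ{\rm Profile}$ with $\bar g$ continuous on ${\rm Profile}(\mathcal{O})$, and then that $\bar g$ extends continuously to the closure and pulls back to $\widetilde g$. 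This descent-and-extension bookkeeping is the one genuinely technical point; everything else is formal.
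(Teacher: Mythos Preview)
Your proposal is correct and follows essentially the same route as the paper: verify the Stone--Weierstrass hypotheses (constants from constant polynomials, separation of points of $\beta\mathcal{O}$ via the identification with the closure of the profile image in $[0,1]^{\mathcal F}$) and conclude. The paper's own proof is just a two-line version of what you wrote, invoking separation of $K\mathcal{O}$ in place of your more explicit coordinate-projection argument.
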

\begin{proof}
The algebra $A(\mathcal{O},\mathcal{F})$ contains a non-zero constant (because of constant polynomials) and $A(\mathcal{O},\mathcal{F})$ separates the points of $\beta\mathcal{O}$ (as it separates the points of $K\mathcal{O}$).
Thus the result follows from Stone-Weierstrass theorem.
\end{proof}

Note that above results are very general, as we made no assumptions on our objects or parameters. Nevertheless, even that has some direct applications of this point of view to the theory of left-convergence.

Let ${\rm hom}(F,G)$ denote the number of homomorphisms from $F$ to $G$, and let $t(F,G)$ be the {\em homomorphism density} of $F$ is $G$ defined by
$$t(F,G)=\frac{{\rm hom}(F,G)}{|G|^{|F|}},$$
that is the probability that a random map from $F$ to $G$ is a homomorphism.

\begin{proposition}
Let $\mathcal O$ be the set of all finite graphs, and let $\mathcal F$ be the set of all the mappings $G\mapsto t(F,G)$ considered as mappings 
$$t(F,\,\cdot\,): \mathcal{O}\rightarrow [0,1].$$

Then the notion of convergence defined above is the 
left-convergence, and $\beta\mathcal{O}$ is homeomorphic to the space of graphons.
\end{proposition}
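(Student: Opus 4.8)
The statement has two halves: first, that $\mathcal{F}$-convergence for this particular $\mathcal O$ and $\mathcal F$ coincides with left-convergence, and second, that $\beta\mathcal O$ is homeomorphic to the space of graphons $\mathcal{W}_0/\!\sim$ (with the cut-metric topology, or equivalently the topology of convergence of all homomorphism densities). The first half is essentially a definitional unwinding: by definition a sequence $(G_n)$ of finite graphs is $\mathcal F$-convergent if and only if $t(F,G_n)$ converges for every finite graph $F$, and this is precisely the standard definition of left-convergence (for dense graphs), as recorded in \cite{LovaszBook}. So the plan for the first half is just to cite the definition and observe the two notions are literally the same; there is nothing to prove beyond noting that each homomorphism-density parameter is a bounded map $\mathcal O\to[0,1]$, so it legitimately belongs to the framework of the preceding general discussion.

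\textbf{For the second half}, the plan is to use the identification, established in the general discussion above, of $\beta\mathcal O$ with the closure of $\{{\rm Profile}(G): G\in\mathcal O\}$ inside $[0,1]^{\mathcal F}=[0,1]^{\{F\text{ finite graph}\}}$, where ${\rm Profile}(G)=(t(F,G))_F$. Thus $\beta\mathcal O$ is exactly the set of all points of $[0,1]^{\mathcal F}$ that arise as limits of homomorphism-density vectors of finite graphs, with the product (pointwise-convergence) topology. The key external input is the Lov\'asz--Szegedy theory: every such limit vector is realized as $(t(F,W))_F$ for some graphon $W\in\mathcal{W}_0$, and conversely every graphon arises as such a limit (by sampling $W$-random graphs); moreover two graphons give the same density vector if and only if they are weakly isomorphic. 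This gives a bijection between $\beta\mathcal O$ and the quotient space $\mathcal{W}_0/\!\sim$ of graphons modulo weak isomorphism. The final step is to check this bijection is a homeomorphism: on the graphon side the relevant topology is the one induced by convergence of all $t(F,\cdot)$, which by the Lov\'asz--Szegedy counting lemma and its converse coincides with the cut-distance topology; and that topology matches, by construction, the subspace topology on the profile closure. Compactness of $\beta\mathcal O$ together with Hausdorffness of the graphon space then upgrades the continuous bijection to a homeomorphism.

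\textbf{Main obstacle.} The genuinely nontrivial content is entirely imported: it is the Lov\'asz--Szegedy representation and compactness theorem for graphons (equivalently, Szemer\'edi-type regularity), namely that every left-convergent sequence of finite graphs has a graphon limit and that the graphon space with the cut metric is compact. Our framework produces the abstract compactification $\beta\mathcal O$ "for free'' from Gelfand duality, but it is precisely this deep combinatorial theorem that identifies that abstract object with a concrete analytic space of symmetric measurable functions. A secondary technical point is the passage to the Kolmogorov quotient: distinct finite graphs need not be distinguished by $\mathcal F$ only in degenerate low-order cases, but weakly isomorphic graphons must be identified, so one must be careful to phrase the homeomorphism as being with the quotient $\mathcal{W}_0/\!\sim$ rather than with $\mathcal{W}_0$ itself. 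Neither of these is an obstacle to the proof strategy; they simply locate where the real mathematics lives.
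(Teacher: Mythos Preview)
Your proposal is correct and matches the paper's intent: the paper in fact states this proposition without proof, treating both assertions as immediate consequences of the general framework just set up together with the Lov\'asz--Szegedy graphon theory from \cite{LovaszBook,Lov'asz2006}. Your unwinding of the first half as purely definitional and your identification of $\beta\mathcal O$ with the profile closure in $[0,1]^{\mathcal F}$, followed by the import of the graphon existence/uniqueness/compactness results to realize that closure as $\mathcal W_0/\!\sim$, is exactly the argument the paper leaves implicit.
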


Let $\mathcal SQ$ denote the algebra of {\em standard quantum graphs}, that is the commutative algebra constructed from vector space with basis
formed by $\{K_1\}$ and all the finite graphs without isolated vertices,  by defining multiplication by linearity from the case of finite graphs, defining
 the unit of the algebra as $K_1$, and the 
product of two graphs without isolated vertices $F_1$ and $F_2$ as their disjoint union.

We shall make use of the following Lemma  \cite[Corollary 5.45, p.~74]{LovaszBook}:

\begin{lemma}
\label{lem:sqbasis}
The graph parameters ${\rm hom}(F,\,\cdot\,)$ (where $F$ ranges over simple graphs) are linearly independent. Equivalently, the  graph parameters ${\rm hom}(F,\,\cdot\,)$ (where $F$ ranges over connected simple graphs) are algebraically independent.
\end{lemma}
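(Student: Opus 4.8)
The plan is to prove the first assertion — linear independence of the parameters $\mathrm{hom}(F,\,\cdot\,)$ as $F$ ranges over (isomorphism classes of) finite simple graphs — and then to read off the ``equivalently'' clause from the fact that $\mathrm{hom}(\,\cdot\,,G)$ is multiplicative under disjoint unions. For the linear independence I would pass through two companion families: $\mathrm{inj}(F,G)$, the number of injective homomorphisms $V(F)\to V(G)$, and $\mathrm{emb}(F,G)$, the number of \emph{induced} embeddings of $F$ into $G$ (so that $\mathrm{emb}(F,F)=|\mathrm{Aut}(F)|$). The elementary counting identities
\[
\mathrm{hom}(F,G)=\sum_{P}\mathrm{inj}(F/P,G),\qquad
\mathrm{inj}(F,G)=\sum_{F\subseteq F'}\mathrm{emb}(F',G),
\]
where $P$ runs over the partitions of $V(F)$ into independent sets (and $F/P$ is the quotient graph on these classes) and, in the second sum, $F'$ runs over the simple graphs on the vertex set $V(F)$ that contain $F$, express each family in terms of the next through a change of basis that is triangular with unit diagonal: the first identity is triangular for the number of vertices (only the trivial $P$ produces a quotient with $|V(F)|$ vertices), the second for the number of edges ($F'=F$ is the unique term with $|E(F)|$ edges). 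Consequently, substituting these identities into a nontrivial linear relation among the $\mathrm{hom}(F,\,\cdot\,)$ yields a linear relation among the $\mathrm{emb}(F,\,\cdot\,)$ in which the coefficient of a graph of maximal vertex number and, among those, minimal edge number is unchanged, hence still nonzero; so it suffices to prove that the parameters $\mathrm{emb}(F,\,\cdot\,)$ are linearly independent.

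For this I would evaluate a hypothetical minimal relation at a well-chosen graph. Suppose $\sum_i c_i\,\mathrm{emb}(F_i,\,\cdot\,)=0$ with the $F_i$ pairwise non-isomorphic and not all $c_i$ zero, and choose among the $F_i$ with $c_i\neq 0$ one with the fewest vertices, say $F_j$. Evaluating at $G=F_j$ gives $\sum_i c_i\,\mathrm{emb}(F_i,F_j)=0$; but $\mathrm{emb}(F_i,F_j)\neq 0$ forces $F_i$ to be isomorphic to an induced subgraph of $F_j$, hence $|V(F_i)|\le|V(F_j)|$, which by the choice of $F_j$ is possible only when $|V(F_i)|=|V(F_j)|$, i.e. when $F_i\cong F_j$. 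Only the term $i=j$ survives, so $c_j|\mathrm{Aut}(F_j)|=0$, contradicting $c_j\neq 0$.

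The ``equivalently'' clause then follows formally. Since $\mathrm{hom}(F_1\sqcup F_2,G)=\mathrm{hom}(F_1,G)\,\mathrm{hom}(F_2,G)$, a simple graph $F$ with connected components $C_1^{a_1}\sqcup\dots\sqcup C_r^{a_r}$ satisfies $\mathrm{hom}(F,\,\cdot\,)=\prod_i\mathrm{hom}(C_i,\,\cdot\,)^{a_i}$; and because the decomposition of a simple graph into connected components is unique, $F\mapsto\prod_i X_{C_i}^{a_i}$ is a bijection between isomorphism classes of simple graphs and monomials in the indeterminates $X_C$ indexed by connected simple graphs. Under the evaluation $X_C\mapsto\mathrm{hom}(C,\,\cdot\,)$ this identifies $\{\mathrm{hom}(F,\,\cdot\,):F\text{ simple}\}$ with the set of all monomials in the $\mathrm{hom}(C,\,\cdot\,)$; hence the former is a linearly independent family precisely when the parameters $\mathrm{hom}(C,\,\cdot\,)$, $C$ connected, are algebraically independent.

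The one place where I expect care to be needed is the triangularity bookkeeping of the first paragraph: one must observe that on a \emph{simple} target $G$ only partitions into independent sets contribute to $\mathrm{hom}(F,G)$ (a class spanning an edge would create a loop in the quotient), and that the two triangular systems can be inverted ``one leading term at a time'', so that a finite linear relation among the $\mathrm{hom}(F,\,\cdot\,)$ really does yield a finite and nonzero relation among the $\mathrm{emb}(F,\,\cdot\,)$. The rest is a routine double induction, on the number of vertices and then on the number of edges.
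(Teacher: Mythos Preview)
Your proof is correct. Note, however, that the paper does not actually prove this lemma: it is quoted as \cite[Corollary~5.45, p.~74]{LovaszBook} and used as a black box. So there is no ``paper's own proof'' to compare against.

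That said, your argument is essentially the standard one found in Lov\'asz's book: pass from $\mathrm{hom}$ to $\mathrm{inj}$ via the surjective--injective factorisation of a homomorphism (partitions of $V(F)$ into independent sets), then from $\mathrm{inj}$ to induced embeddings by adding the missing edges on the image, observe that both transitions are unitriangular (for vertex number, respectively edge number), and finally kill a putative relation among the $\mathrm{emb}(F,\cdot)$ by evaluating at a minimal $F_j$. The derivation of the ``equivalently'' clause from multiplicativity of $\mathrm{hom}$ under disjoint union and uniqueness of the connected-component decomposition is likewise the standard one. Your caveat about bookkeeping is well placed but harmless: the expansion of each $\mathrm{hom}(F,\cdot)$ involves only finitely many $\mathrm{emb}(K,\cdot)$ (all $K$ have at most $|V(F)|$ vertices), so a finite relation stays finite, and the two triangularities compose to show that the coefficient of $\mathrm{emb}(F_j,\cdot)$, for $F_j$ of maximal order and then minimal size, is exactly the original $c_j$.
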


\begin{proposition}
Let $\mathcal O$ be the set of all finite graphs, and let $\mathcal F$ be the set of homomorphism densities
$$t(F,\,\cdot\,): \mathcal O\rightarrow [0,1].$$

Then the mapping
$$T:\mathcal{SQ}\rightarrow A(\mathcal{O},\mathcal{F})\qquad
\sum_{i=1}^\ell a_i\,F_i\mapsto \sum_{i=1}^\ell a_i t(F_i,\,\cdot\,)$$
is an algebra isomorphism of $\mathcal{SQ}$ and $A(\mathcal{O},\mathcal{F})$.
\end{proposition}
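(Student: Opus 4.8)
The plan is to show that the map $T$ is a well-defined algebra homomorphism, that it is surjective, and that it is injective, the last point being where the real content lies.

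\smallskip

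First I would check that $T$ is well-defined and multiplicative. The algebra $\mathcal{SQ}$ has as a vector-space basis the element $K_1$ together with all finite graphs without isolated vertices, and its multiplication is defined by linear extension of disjoint union (with $K_1$ as unit). On the other hand, $A(\mathcal O,\mathcal F)$ is by definition the sub-algebra of $C_b(\mathcal O)$ generated by the $t(F,\,\cdot\,)$, closed under the pointwise product. The key algebraic identity is the multiplicativity of homomorphism densities over disjoint unions: for any finite graphs $F_1,F_2$ and any finite graph $G$,
$$
t(F_1\cup F_2,G)=t(F_1,G)\,t(F_2,G),
$$
which is immediate from $\operatorname{hom}(F_1\cup F_2,G)=\operatorname{hom}(F_1,G)\operatorname{hom}(F_2,G)$ and the fact that $|F_1\cup F_2|=|F_1|+|F_2|$. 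Since adding an isolated vertex to $F$ multiplies both $\operatorname{hom}(F,\,\cdot\,)$ and $|\,\cdot\,|$ by $|G|$, we have $t(F\cup K_1,\,\cdot\,)=t(F,\,\cdot\,)$, so passing to representatives without isolated vertices is harmless and $t(K_1,\,\cdot\,)\equiv 1$ is the unit of $A(\mathcal O,\mathcal F)$. Hence $T$ sends the distinguished basis of $\mathcal{SQ}$ to a set of generators of $A(\mathcal O,\mathcal F)$ in a way compatible with products and units, so $T$ is a well-defined surjective algebra homomorphism.

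\smallskip

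The heart of the argument is injectivity of $T$, and this is exactly where Lemma~\ref{lem:sqbasis} is used. Suppose $\sum_{i=1}^\ell a_i\,t(F_i,\,\cdot\,)=0$ as a function on all finite graphs, where the $F_i$ are pairwise non-isomorphic graphs without isolated vertices (and possibly one of them is $K_1$). Multiplying through by $|G|^{N}$ for $N=\max_i|F_i|$ and using $t(F,G)=\operatorname{hom}(F,G)/|G|^{|F|}$, we get a vanishing linear combination of the functions $\operatorname{hom}(F_i',\,\cdot\,)$, where $F_i'$ is $F_i$ with enough isolated vertices added to bring all orders up to $N$; these $F_i'$ are again pairwise non-isomorphic simple graphs. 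Lemma~\ref{lem:sqbasis} asserts that the parameters $\operatorname{hom}(F,\,\cdot\,)$ over simple graphs $F$ are linearly independent, so all $a_i=0$. Therefore $\ker T=0$ and $T$ is an isomorphism of algebras.

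\smallskip

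The main obstacle is purely bookkeeping rather than conceptual: one must be careful about the normalization by isolated vertices, so that the passage between the basis of $\mathcal{SQ}$ (graphs without isolated vertices) and the linear-independence statement of Lemma~\ref{lem:sqbasis} (all simple graphs) is handled cleanly — in particular that clearing denominators to reduce a relation among the $t(F_i,\,\cdot\,)$ to a relation among $\operatorname{hom}(F_i',\,\cdot\,)$ preserves non-isomorphism of the graphs involved and does not secretly identify two basis elements of $\mathcal{SQ}$. Once this normalization is pinned down, surjectivity is by construction and injectivity is an immediate consequence of the quoted lemma, so there is no further analytic input needed here.
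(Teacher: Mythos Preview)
Your proof is correct and follows essentially the same route as the paper: the paper's argument concentrates on injectivity via the same trick of padding each $F_i$ with isolated vertices to a common order $N$ so that $|G|^N t(F_i,G)=\hom(F_i',G)$, and then invokes Lemma~\ref{lem:sqbasis}. You are in fact a bit more explicit than the paper in verifying the homomorphism and surjectivity parts, but the key step is identical.
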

\begin{proof}
The algebraic independence of the parameters $t(F,\,\cdot\,)$ for graphs without isolated vertices will follow from
the linear independence of the parameters $t(F,\,\cdot\,)$ for simple graphs $F$ either equal to $K_1$ or without isolated vertices , which we now prove:

Assume that there exist non-isomorphic simple graphs $F_1,\dots,F_\ell$ (all of them being either $K_1$ or without isolated vertices) and reals $a_1,\dots,a_\ell$ such that
$$\sum_{i=1}^\ell a_i t(F_i,G)=0$$
holds  for every graph $G$.
Let $N=\max_{i=1}^\ell |F_i|$, and let $F_i'$ be the graph obtained from $F_i$ by adding $N-|F_i|$ isolated vertices. Then
$${\rm hom}(F_i',G)=|G|^{|F_i'|}t(F_i',G)=|G|^Nt(F_i,G).$$
Hence the equation $\sum_{i=1}^\ell a_i {\rm hom}(F_i',G)=0$ holds for every graph $G$. As no two graphs $F_i'$ are isomorphic, it follows from 
Lemma~\ref{lem:sqbasis} that $a_1,\dots,a_l$ are all zero.
\end{proof}
Let us give two direct consequences of this lemma.
The first consequence can be seen as an analog Stone Weierstrass  uniform approximation of a continuous function  by polynomials.

Say that a graph parameter $f$ is {\em left continuous} if the sequence $(f(G_n))_{n\in\bbbn}$ converges for every left convergent sequence of graphs $(G_n)_{n\in\bbbn}$.

\begin{proposition}
For every left-continuous graph parameter $f$ and every $\epsilon>0$ there exists
a quantum graph $\mathbf F=\sum_{i=1}^\ell a_i\,F_i$ such that for every graph $G$ it holds
$$
|t(\mathbf F,G)-f(G)|<\epsilon.
$$
\end{proposition}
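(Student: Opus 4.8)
The plan is to read the statement as a Stone--Weierstrass approximation inside $C(\beta\mathcal O)$. Two ingredients are already at hand: the density of $A(\mathcal O,\mathcal F)$ in $C_b(\mathcal O)=C(\beta\mathcal O)$, and the algebra isomorphism $T\colon\mathcal{SQ}\to A(\mathcal O,\mathcal F)$ sending $\sum a_iF_i$ to $\sum a_i t(F_i,\,\cdot\,)$. Hence it suffices to prove that a left-continuous parameter $f$ is the restriction to $\mathcal O$ of some $\bar f\in C(\beta\mathcal O)$. Granting this, I would pick $h\in A(\mathcal O,\mathcal F)$ with $\|h-\bar f\|_\infty<\epsilon$, write $h=T(\mathbf F)=t(\mathbf F,\,\cdot\,)$ for the corresponding quantum graph $\mathbf F=\sum_{i=1}^\ell a_iF_i\in\mathcal{SQ}$, and read off $|t(\mathbf F,G)-f(G)|\le\|h-\bar f\|_\infty<\epsilon$ for every finite graph $G$, using that $\bar f$ agrees with $f$ at the image ${\rm Profile}(G)$ of $G$ in $\beta\mathcal O$.

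First I would check that $f$ is constant on $\sim$-classes: if ${\rm Profile}(G_1)={\rm Profile}(G_2)$, the alternating sequence $G_1,G_2,G_1,G_2,\dots$ has constant, hence convergent, profile, so it is left-convergent, and left-continuity forces $f(G_1),f(G_2),f(G_1),\dots$ to converge, i.e. $f(G_1)=f(G_2)$. Thus $f$ descends to a well-defined function $\tilde f$ on the dense subset $D=\{{\rm Profile}(G):G\in\mathcal O\}$ of $\beta\mathcal O$ (recall $\beta\mathcal O$ is the closure of $D$ in $[0,1]^{\mathcal F}$).

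The core step is to extend $\tilde f$ to a continuous $\bar f$ on $\beta\mathcal O$. Since there are only countably many finite graphs, $\mathcal F$ is countable, so $[0,1]^{\mathcal F}$ and hence $\beta\mathcal O$ are compact metrizable, and I may argue with sequences. For $y\in\beta\mathcal O$ choose graphs $G_n$ with ${\rm Profile}(G_n)\to y$; then $(G_n)$ is $\mathcal F$-convergent by definition, so $(f(G_n))$ converges, and I set $\bar f(y)=\lim_n f(G_n)$. Interleaving two admissible sequences shows this value is independent of the choice, so $\bar f$ is well defined and extends $\tilde f$; and if $y_k\to y$, picking $d_k\in D$ with ${\rm dist}(d_k,y_k)<1/k$ and $|\tilde f(d_k)-\bar f(y_k)|<1/k$ yields $d_k\to y$, whence $\bar f(y_k)\to\bar f(y)$, so $\bar f$ is continuous, in particular bounded since $\beta\mathcal O$ is compact. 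This is just the standard fact that a function defined on a dense subset of a compact metric space and continuous along convergent sequences into that subset extends uniquely to a continuous function on the whole space; combined with the first paragraph it finishes the proof.

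I expect the only real obstacle to be this core step --- upgrading the \emph{sequential} hypothesis ``left-continuous'' to genuine topological continuity of an extension on $\beta\mathcal O$ --- which is precisely where countability of $\mathcal F$ (so that $\beta\mathcal O$ is metrizable) enters; everything else is bookkeeping with the dictionary $\mathcal{SQ}\cong A(\mathcal O,\mathcal F)\subseteq C(\beta\mathcal O)$ already set up above.
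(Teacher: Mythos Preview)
Your proposal is correct and takes the same route as the paper, whose proof is the single line ``immediate consequence of the density of $T(\mathcal{SQ})$ in $C(\beta\mathcal O)$''. You simply supply the step the paper leaves implicit --- that a left-continuous parameter extends to an element of $C(\beta\mathcal O)$ --- via a standard metrizability-and-interleaving argument, and then finish identically.
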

\begin{proof}
This is an immediate consequence of the density of $T(\mathcal{SQ})$ in $C(\beta\mathcal{O})$.
\end{proof}

Say that a quantum graph $\mathbf{F}$ is {\em non-negative} (which we denote by $\mathbf{F}\geq 0$) if
$t(\mathbf{F},G)\geq 0$ for every graph $G$.
The non-negativity of quantum graphs was subject to intensive study
(see \cite{LovaszBook}). 
The second consequence gives yet another characterization of this property.

\begin{proposition}
\label{prop:sqrt}
Let $\mathbf F$ be a quantum graph. Then $\mathbf F\geq 0$ if and only if for every $\epsilon>0$ there exists a quantum graph $\mathbf H$ such that 
$\mathbf H\geq 0$ and $\|\mathbf F-\mathbf H^2\|<\epsilon$, that is:
$$
\inf_G t(\mathbf{H},G)\geq 0\quad\text{and}\quad
\sup_{G}|t(\mathbf F,G)-t(\mathbf H^2,G)|<\epsilon.$$
\end{proposition}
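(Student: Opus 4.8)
The plan is to transport everything through the algebra isomorphism $T\colon\mathcal{SQ}\to A(\mathcal O,\mathcal F)$ of the previous proposition, under which ``$\mathbf F\geq 0$'' becomes ``$T(\mathbf F)\geq 0$ pointwise on $\mathcal O$'', the product $\mathbf H^2$ becomes the pointwise square $T(\mathbf H)^2$, and $\|\,\cdot\,\|$ is the sup norm of the associated function in $C_b(\mathcal O)=C(\beta\mathcal O)$. The statement then reads: a nonnegative element of $C_b(\mathcal O)$ is a uniform limit of squares of nonnegative elements of $A(\mathcal O,\mathcal F)$. The forward implication is immediate: if for every $\epsilon>0$ there is $\mathbf H\geq 0$ with $\|\mathbf F-\mathbf H^2\|<\epsilon$, then, $T$ being multiplicative, $t(\mathbf H^2,G)=t(\mathbf H,G)^2\geq 0$ for every graph $G$, so $t(\mathbf F,G)>-\epsilon$ for every $G$; letting $\epsilon\to 0$ gives $\mathbf F\geq 0$ (here $\mathbf H\geq 0$ is not even needed).

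For the converse I would assume $\mathbf F\geq 0$, set $M=\sup_G t(\mathbf F,G)<\infty$, and form the genuine square root $g\colon G\mapsto\sqrt{t(\mathbf F,G)}$. Since $t(\mathbf F,\cdot)=T(\mathbf F)$ lies in $A(\mathcal O,\mathcal F)\subseteq C_b(\mathcal O)$ and $\sqrt{\,\cdot\,}$ is continuous on $[0,M]$, the function $g$ is a nonnegative element of $C_b(\mathcal O)$ bounded by $\sqrt M$, with $g^2=T(\mathbf F)$. By the Stone--Weierstrass density lemma above, $A(\mathcal O,\mathcal F)=T(\mathcal{SQ})$ is dense in $C_b(\mathcal O)$, so for any prescribed $\delta>0$ there is a quantum graph $\mathbf H_0$ with $\|T(\mathbf H_0)-g\|<\delta$.

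The delicate point is that $\mathbf H_0$ need not be nonnegative and so cannot serve as $\mathbf H$ directly; the remedy is the harmless additive shift $\mathbf H=\mathbf H_0+\delta\,K_1$. Since $t(K_1,G)=1$ for every graph $G$, we get $t(\mathbf H,G)=t(\mathbf H_0,G)+\delta>(g(G)-\delta)+\delta\geq 0$, hence $\mathbf H\geq 0$, while $\|T(\mathbf H)-g\|<2\delta$ and $\|T(\mathbf H)\|\leq\sqrt M+2\delta$. Then
$$\|\mathbf F-\mathbf H^2\|=\|g^2-T(\mathbf H)^2\|\leq\|g-T(\mathbf H)\|\,\bigl(\|g\|+\|T(\mathbf H)\|\bigr)<2\delta\,(2\sqrt M+2\delta),$$
so choosing $\delta$ small in terms of $\epsilon$ and $M$ yields $\|\mathbf F-\mathbf H^2\|<\epsilon$, completing the proof. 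I expect the only real obstacle to be exactly this positivity of the approximant: $\mathcal{SQ}$ has no square-root operation and its positive cone is not visibly dense in that of $C_b(\mathcal O)$, but the shift by $\delta K_1$ circumvents this, and the remainder is just Stone--Weierstrass together with uniform continuity of $\sqrt{\,\cdot\,}$ on a bounded interval.
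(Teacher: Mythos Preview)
Your proof is correct and follows essentially the same approach as the paper's: both transport through $T$, approximate a square root via Stone--Weierstrass density, and then arrange nonnegativity of the approximant by an additive shift. The only cosmetic difference is that the paper shifts \emph{before} taking the square root (working with $h(G)=\sqrt{t(\mathbf F,G)+\epsilon/2}$, so that $h$ is bounded below by $\sqrt{\epsilon/2}$ and any sufficiently close approximant is automatically nonnegative), whereas you take the square root first and shift the approximant by $\delta K_1$ afterwards---both devices accomplish the same thing.
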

\begin{proof}
One direction is clear. For the other direction, assume $\mathbf F\geq 0$. Let $h$ be the
 left-continuous graph parameter defined by 
 $h(G)=(t(\mathbf F,G)+\epsilon/2)^{1/2}$.
Let $\mathbf H$ be a quantum graph such that $\|h-T(\mathbf H)\|<\alpha$, where
$\alpha(\|\mathbf F+\epsilon/2K_1\|^{1/2}+\alpha)<\epsilon/2$.
Then
$$\inf_G t(\mathbf H,G)\geq \inf_G t(h,G)-\alpha\geq (\epsilon/2)^{1/2}-\alpha\geq 0.$$
Moreover,
$$\|(\mathbf F+\epsilon/2K_1)-\mathbf H^2\|=\|h^2-T(\mathbf H^2)\|
\leq \|h+T(\mathbf H)\|\,\|h-T(\mathbf H)\|
\leq (2\|h\|+\alpha)\alpha<\epsilon/2.$$
Thus
$$
\|\mathbf F-\mathbf H^2\|\leq \|(\mathbf F+\epsilon/2K_1)-\mathbf H^2\|+\|\epsilon/2K_1\|<\epsilon.$$
\end{proof}

It is also pleasant to note that the sup norm defined on $C_b(\mathcal{O})$ defines naturally (by identification of $\mathcal{SQ}$ and $A(\mathcal{O},\mathcal{F})$) defines  a norm 
$$\Bigl\|\sum_i a_iF_i\bigr\|=\sup_{G\in\mathcal{O}}\,\Bigl|\sum_i a_i t(F_i,G)\Bigr|$$
for quantum graphs.

As we shall see later on, left-convergence of graphs is equivalent to (structural) ${\rm QF}^-$-convergence,
 where ${\rm QF}^-$ is the fragment of quantifier-free formula without equality. The global situation, which we just outlined, is depicted on Fig.~\ref{fig:QFq}
 \begin{figure}[ht]
 \includegraphics[width=\textwidth]{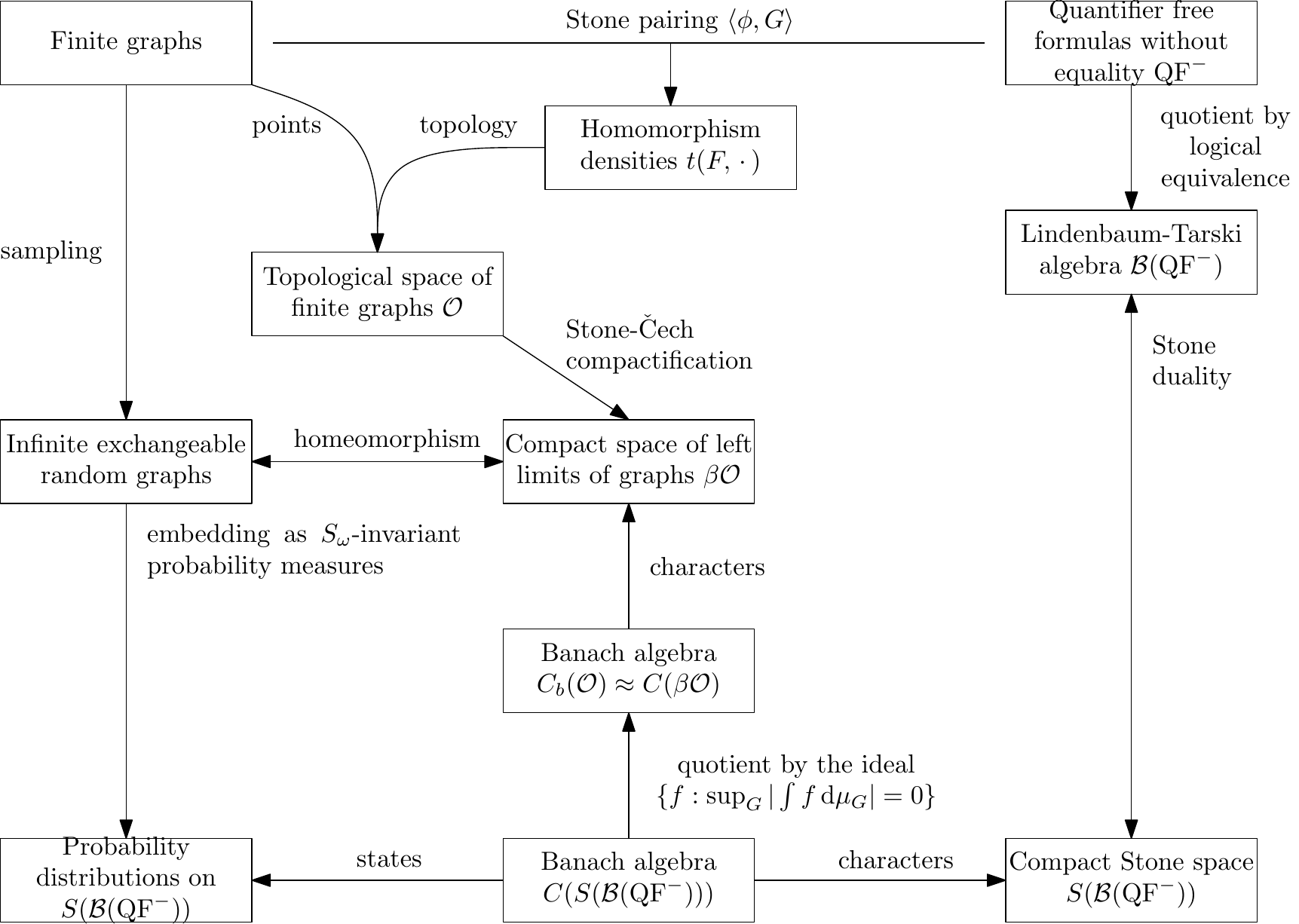}
\caption{Left-convergence and Structural ${\rm QF}^-$-convergence.
 \label{fig:QFq}}
 \end{figure}
\section{Fragments of Interest}
\label{sec:mu}

We review here a more detailed analysis of the convergence notions 
based of  Lindenbaum-Tarski algebras of different fragments, in the context of Section~\ref{sec:ana} and of our earlier approach \cite{CMUC}. We consider the following fragments:
\begin{itemize}
\item ${\rm FO}_0$: the fragment of sentences; 
\item ${\rm FO}_p$: the fragment of formulas with free variables included in $\{x_1,\dots,x_p\}$;
\item ${\rm QF}$: the fragment of quantifier-free formulas;
\item ${\rm FO}^{\rm local}$: the fragment of local formulas;
\item ${\rm FO}_1^{\rm local}$: the fragment of local formulas with single free variable.
\end{itemize}

Let us review some particular important cases of our theory induced by specific fragments of first-order logic.
\subsection{Elementary Limits}
Our starting case consists in considering {\em elementary convergence}:
A sequence $(\mathbf{A}_n)_{n\in\bbbn}$ is {\em elementary convergent} if, for every sentence (that is formulas without any free variables) $\theta$ there is an integer $N$ such that either all the $\mathbf{A}_n$ with $n\geq N$ satisfy $\theta$, or no $\mathbf{A}_n$ with $n\geq N$ satisfies $\theta$.
In other words, elementary convergence is convergence defined by the fragment ${\rm FO}_0(\sigma)$ of first-order sentences. The Stone dual $S$ of the Lindenbaum-Tarski algebra of first-order sentences is the space of  complete theories.
Recall that a {\em complete theory} is a maximal consistent set of sentences.
G\"odel's celebrated completeness theorem of first-order logic asserts that every complete theory theory has a model. In other words, for every consistent set
$T\subset {\rm FO}_0(\sigma)$ of sentences there exists a $\sigma$-structure
$\mathbf A$ that is a {\em model} of $T$, that is such that $\mathbf A$ satisfies every sentence in $T$. Moreover, according to L\"owenheim--Skolem theorem, if the signature $\sigma$ is at most countable then every consistent theory $T\subset {\rm FO}_0(\sigma)$ has a model whose domain is at most countable.

Conversely, to each $\sigma$-structure $\mathbf{A}$ one associates the
{\em complete theory}  ${\rm Th}(\mathbf{A})$ of $\mathbf{A}$, which is the set of all the sentences satisfied by $\mathbf A$.
Although it is easily checked to the mapping $\mathbf{A}\mapsto {\rm Th}(\mathbf{A})$ is $1-1$ on finite structures, this is not the case for infinite structures;  two structures $\mathbf{A}$ and $\mathbf{B}$ sharing the same complete theory are said to be {\em elementary equivalent}.

In such a context, the Stone bracket $\langle\phi,\mathbf A\rangle$ gets only values $0 $ (if $A\not\models\phi$) or $1$ (if $A\models\phi$) , and
the measure $\mu_{\mathbf{A}}$ associated to $\mathbf A$ is a Dirac measure at the point ${\rm Th}(\mathbf A)$. The class of all finite $\sigma$-structures can accordingly be identified with an open subset $O$ of $S$. The elementary limits of ${\rm FO}_0$-convergent sequences of finite $\sigma$-structures correspond  to the points of the closure of $O$, which correspond to the complete theories $T$ having the {\em finite model property}, that is the complete theories $T$ such a every finite subset of $T$ has a finite model.

It follows that every ${\rm FO}_0$-convergent sequence of finite
$\sigma$-structures has a limit that can be represented as a $\sigma$-structure with an at most countable domain. 

Note that  characterizing elementary limits of finite structures is essentially difficult, as finite model property is not decidable in general. However, it is worth noticing that  
first-order formulas without functional symbols where all existential quantifications appear first in the formula, has the finite model property.
Precisely,  Ramsey \cite{Ramsey1930}  showed that a sentence 
$$\exists x_1\dots\exists x_n\forall y_1\dots\forall y_m\ \phi$$
(where $\phi$ is quantifier-free and has no function symbols)
has a model if and only if it has a model of size bounded by $n$ plus the number of constants in $\phi$.
\subsection{Left limits and the Quantifier Free Fragment}
We can precise our intuition by
considering the case of undirected graphs with convergence driven by the fragment ${\rm QF}^-$ of quantifier free 
formulas without equality (meaning that the equality symbol cannot be used).
It is easily checked that the corresponding notion of convergence of a sequence
$(G_n)_{n\in\bbbn}$ of graphs is nothing but the {\em left-convergence} introduced by Lov\'asz {\em et al.}, which is based on the convergence of the {\em homomorphism profile} $t$, which associates to each finite graph $F$ the 
probability $t(F,G_n)$
that a random map from $F$ to $G_n$ is a homomorphism (that is an adjacency-preserving map). The Stone dual $S({\rm QF}^-)$ of the Lindenbaum-Tarski algebra of ${\rm QF}^-$ can be described as follows: each point $p\in S({\rm QF}^-)$ is a maximal consistent set of formulas  ${\rm QF}^-$, hence is uniquely determined by either $x_i\sim x_j$ or $\neg(x_i\sim x_j)$ 
 for each pair $(i,j)\in\bbbn^2$, stating that 
 $x_i$ is adjacent or not to $x_j$. Each point $p$ can thus be represented by means of a (labeled) countable graph with vertex set $\bbbn$, and the left limit
 of a sequence of graphs can be represented as an $S_\omega$-invariant  probability distribution on the Stone space, that is as an {\em infinite exchangeable random graph}. Then it follows from Aldous and Hoover extentions of de Finetti's theorem  to exchangeable arrays \cite{Aldous1985,Hoover1979} that left limits of graphs can be represented by means of a {\em graphon}, that is a symmetric measurable function
 $W:[0,1]\times [0,1]\rightarrow [0,1]$.
However, note that in the case of the fragment ${\rm QF}^-$, the mapping
$G\mapsto \mu_G$ is not injective, as two vertices that can be uniformly blown up in order to obtain a same graph are mapped to the same probability measure. This special aspect is due to the weakness of expressive power of the ${\rm QF}^-$ fragment, and it disappears when one considers the fragment ${\rm QF}$ of all quantifier-free formulas, which essentially defines the same notion of convergence.

\subsection{Local limits and the ${\rm FO}_1^{\rm local}$ fragment}
We now consider the case of undirected graphs with degree at most $D$, and the fragment  ${\rm FO}_1^{\rm local}$ of local formulas with a single free variable. Recall that a formula $\phi$ is {\em $t$-local} if its satisfaction only depends on a distance $t$-neighborhood 
of the free variables, and that $\phi$ is {\em local} if it is $t$-local for some $t\in\bbbn$. 

Local convergence of graphs with bounded degree has been defined by Benjamini and Schramm~\cite{Benjamini2001}. 
In our setting, local convergence can be defined as follows:
 A sequence $(G_n)_{n\in\bbbn}$ of graphs with maximum degree $D$ is {\em locally-convergent} if, for every $r\in\bbbn$, the distribution of the isomorphism types of the distance $r$-neighborhood of a random vertex of $G_n$ converges as $n$ goes to infinity. 
Local convergence witnesses the particular importance of the fragment ${\rm FO}_1^{\rm local}$:
 As each isomorphism type $T$ of the distance $r$-neighborhood of a vertex can be characterized by the satisfaction of an $r$-local formula formula $\phi_T(x)$ (where $x$ stands for the root vertex), it follows that ${\rm FO}_1^{\rm local}$-convergence implies local convergence. Conversely, as the satisfaction of an $r$-local formula with a single free variable only depends on the isomorphism type of the (rooted) distance $r$-neighborhood of the free variable,  ${\rm FO}_1^{\rm local}$-convergence is equivalent local convergence.

In general, points of the Stone dual of ${\rm FO}_1^{\rm local}$ correspond to the complete theories of rooted connected structures. As we consider graphs with bounded degrees, we know that two rooted connected countable graphs with maximum degree at most $D$ are elementarily equivalent if and only if they are isomorphic. It follows  that in this case the 
 Stone dual of ${\rm FO}_1^{\rm local}$ is the space of all (isomorphism classes of) rooted connected countable graphs with maximum degree $D$.

\subsection{Interrelations of Fragments and Reductions}

A peculiar feature of the fragments ${\rm FO}_0$ and ${\rm FO}^{\rm local}$, stated as  Theorem~\ref{thm:locel}, is a consequence of Gaifman locality theorem \cite{Gaifman1982}.  This we recall now:

\begin{theorem}
\label{thm:gaifman}
For every first-order formula $\phi(x_1,\dots,x_n)$ there exist integers
$t$ and $r$ such that $\phi$ is equivalent to a Boolean combination of $t$-local
formulas $\xi_s(x_{i_1},\dots,x_{i_s})$ and sentences of the form

\begin{equation}
\label{eq:gl}
\exists
y_1\dots\exists y_m\biggl(\bigwedge_{1\leq i<j\leq m}{\rm
dist}(y_i,y_j)>2r\wedge\bigwedge_{1\leq i\leq
m}\psi(y_i)\biggr)
\end{equation}

where $\psi$ is $r$-local. Furthermore, we can choose
 $$r\leq 7^{{\rm qrank}(\phi)-1},\ t\leq
(7^{{\rm qrank}(\phi)-1}-1)/2,\ m\leq n+{\rm qrank}(\phi),$$
 and, if
$\phi$ is a sentence, only sentences~\eqref{eq:gl} occur in the Boolean
combination. Moreover, these sentences can be chosen with quantifier rank
at most $f({\rm qrank}(\phi))$, for some fixed function $f$.  
\end{theorem}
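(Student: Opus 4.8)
The plan is to prove this by induction on the structure of $\phi$, following Gaifman's original argument \cite{Gaifman1982}, and then to track the three parameters through the induction. Atomic formulas are already $0$-local, and a Boolean combination of Boolean combinations of $t$-local formulas and sentences of the form \eqref{eq:gl} is again such a Boolean combination, so the base cases and the propositional connectives are immediate. The whole content is therefore the case $\phi(x_1,\dots,x_n)=\exists y\,\psi(x_1,\dots,x_n,y)$, where by the induction hypothesis $\psi$ is already a Boolean combination of $t$-local formulas and basic local sentences of the form \eqref{eq:gl}; the task is to put $\exists y\,\psi$ into the same form. First I would rewrite $\psi$ in disjunctive normal form over its local ``literals'' and its basic local sentences; since a basic local sentence has no free variables it commutes with $\exists y$, and since a conjunction or negation of $t$-local formulas is again $t$-local, this reduces everything to analysing a single formula $\exists y\,\delta(x_1,\dots,x_n,y)$ with $\delta$ being $t$-local.

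Next I would split according to the distance from a potential witness $y$ to the parameters $\bar x=(x_1,\dots,x_n)$. If ${\rm dist}(y,x_i)\le 2t+1$ for some $i$, then the $t$-neighbourhood of $(\bar x,y)$ is contained in the $(3t+1)$-ball around $\bar x$ (and the witness itself lies in that ball), so $\exists y\,\bigl(\bigvee_i {\rm dist}(y,x_i)\le 2t+1\ \wedge\ \delta(\bar x,y)\bigr)$ is itself a $(3t+1)$-local formula of $\bar x$. If instead ${\rm dist}(y,x_i)>2t+1$ for all $i$, then the $t$-ball around $y$ is disjoint from, and joined by no edge to, the $t$-balls around the $x_i$, so the induced structure on the relevant neighbourhood of $\{x_1,\dots,x_n,y\}$, with its distinguished elements, is a disjoint union of a piece depending only on $\bar x$ and a piece depending only on $y$. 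A Feferman--Vaught decomposition for disjoint unions (with the distinguished points absorbed into the signature) then rewrites $\delta(\bar x,y)$, in this regime, as a disjunction $\bigvee_k\bigl(\lambda_k(\bar x)\wedge\eta_k(y)\bigr)$ of products of local formulas, whence
\[
\exists y\Bigl(\textstyle\bigwedge_i{\rm dist}(y,x_i)>2t+1\ \wedge\ \delta(\bar x,y)\Bigr)\ \equiv\ \bigvee_k\Bigl(\lambda_k(\bar x)\wedge\exists y\bigl(\textstyle\bigwedge_i{\rm dist}(y,x_i)>2t+1\ \wedge\ \eta_k(y)\bigr)\Bigr).
\]
The residual problem is then to bring each formula ``$\exists y$ far from all the $x_i$, with $\eta(y)$'', where $\eta$ is local in the single variable $y$, into the target form. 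For this I would use a packing argument: if the set defined by $\eta$ in the structure contains $n+1$ elements that are pairwise far apart, then at least one of them is far from every one of the $n$ points $x_i$, so the formula holds --- and ``there exist $n+1$ pairwise-far elements satisfying $\eta$'' is precisely a basic local sentence of the form \eqref{eq:gl}; in the complementary case the set defined by $\eta$ is covered by at most $n$ balls of bounded radius, and a finite case analysis of how these ``clusters'' sit relative to $\bar x$ (a cluster meeting a bounded neighbourhood of some $x_i$ contributes a local condition on $\bar x$, while if some cluster avoids all of them the formula simply holds) finishes the reduction.

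The main obstacle, I expect, is exactly this last step: turning the informal cluster analysis into an honest Boolean combination of the two admissible kinds of atoms. This is the only place where one genuinely needs \emph{both} $t$-local formulas and sentences of the form \eqref{eq:gl}, and where the bound $m\le n+{\rm qrank}(\phi)$ on the width originates, since each quantifier raises the width of the basic local sentences by at most one. A secondary point of care is the bookkeeping of radii: each quantifier inflates the locality radius by a bounded factor, and a careful accounting of the near/far thresholds, of the re-localisation of the basic local sentences, and of the Feferman--Vaught step yields $r\le 7^{{\rm qrank}(\phi)-1}$ and $t\le (7^{{\rm qrank}(\phi)-1}-1)/2$, while the quantifier rank of the sentences produced stays bounded by a fixed function $f({\rm qrank}(\phi))$. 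Finally, when $\phi$ is a sentence one has $n=0$, the near case is vacuous, and the residual formula ``$\exists y\,\eta(y)$'' is already a basic local sentence (the $m=1$ instance of \eqref{eq:gl}), so no $t$-local atoms survive and only sentences of the form \eqref{eq:gl} occur in the Boolean combination.
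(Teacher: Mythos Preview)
The paper does not prove Theorem~\ref{thm:gaifman}; it merely quotes Gaifman's locality theorem with a citation to \cite{Gaifman1982} and then uses it as a black box for Theorem~\ref{thm:locel} and Lemma~\ref{lem:vtw}. So there is no proof in the paper to compare your attempt against.

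That said, your sketch is the standard inductive argument and is correct in outline. The near/far split on the witness $y$, the disjoint-union decomposition of the $t$-neighbourhood in the far case (your Feferman--Vaught step; Gaifman's original uses Hintikka formulas, but the content is the same), and the packing argument producing the basic local sentences of width $m\le n+1$ are exactly the ingredients. You are also right that the ``cluster analysis'' is where the real work lies: the clean way to finish is to case-split on the maximal $m\le n+1$ for which the sentence $\theta_m$ (``there exist $m$ pairwise $>2d$-apart points satisfying $\eta$'') holds, and for each fixed $m$ to observe that a maximal $2d$-scattered $\eta$-set of size $m$ covers all of $\eta(\mathbf A)$ by $m$ balls of radius $2d$, so the question of whether some $\eta$-point avoids all $x_i$ becomes a local statement about the $x_i$ of bounded radius. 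This is what forces the locality radius to grow by a constant factor at each quantifier, giving the $7^{{\rm qrank}(\phi)-1}$ bounds, and what bounds the quantifier rank of the produced sentences by a fixed function of ${\rm qrank}(\phi)$. Your remark on the sentence case ($n=0$) is correct as well.
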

The proof of the following reduction theorem essentially uses Theorem~\ref{thm:gaifman}.

\begin{theorem}[\cite{CMUC}]
\label{thm:locel}
Let $(\mathbf A_n)$ be a sequence of finite $\sig$-structures. Then $(\mathbf A_n)$ is ${\rm FO}$-convergent if
and only if it is both ${\rm FO}^{\rm local}$-convergent and ${\rm FO}_0$-convergent.
Precisely, $(\mathbf A_n)$ is ${\rm FO}_p$-convergent if
and only if it is both ${\rm FO}_p^{\rm local}$-convergent and ${\rm FO}_0$-convergent.
\end{theorem}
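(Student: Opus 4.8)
\textbf{Proof plan for Theorem~\ref{thm:locel}.}
The plan is to deduce this reduction theorem from Gaifman's locality theorem (Theorem~\ref{thm:gaifman}) by a careful bookkeeping argument about Stone pairings. First I would observe that the ``only if'' direction is essentially trivial: ${\rm FO}^{\rm local}$ and ${\rm FO}_0$ are fragments of ${\rm FO}$, so ${\rm FO}$-convergence immediately implies convergence of $\langle\phi,\mathbf A_n\rangle$ for every local formula $\phi$ and every sentence $\theta$; the same remark with ${\rm FO}_p$ in place of ${\rm FO}$ gives the refined statement in one direction. The content is entirely in the converse, so I would spend the bulk of the proof there.

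For the ``if'' direction, fix an arbitrary first-order formula $\phi(x_1,\dots,x_p)$ and apply Theorem~\ref{thm:gaifman} to write $\phi$ as a Boolean combination of $t$-local formulas $\xi_s(x_{i_1},\dots,x_{i_s})$ and Gaifman sentences of the form~\eqref{eq:gl}. The first step is to reduce from ``Boolean combination'' to ``single conjunctive clause'': expand the Boolean combination into disjunctive normal form, note that the satisfying sets of the resulting conjunctions partition $\phi(\mathbf A_n)$ up to sets of measure governed by intersections, and use inclusion--exclusion so that it suffices to control $\langle\psi,\mathbf A_n\rangle$ for each $\psi$ that is a conjunction of (negated) local formulas and (negated) Gaifman sentences. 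Since a Gaifman sentence~\eqref{eq:gl} and its negation take only the values $0$ or $1$ on a given structure, and that value is determined by the ${\rm FO}_0$-type, the hypothesis of ${\rm FO}_0$-convergence means that for all large $n$ the ``sentence part'' of $\psi$ is either uniformly true on all $\mathbf A_n$ or uniformly false on all $\mathbf A_n$. In the uniformly-false case $\langle\psi,\mathbf A_n\rangle=0$ eventually; in the uniformly-true case $\langle\psi,\mathbf A_n\rangle$ equals $\langle\psi',\mathbf A_n\rangle$ where $\psi'$ is the conjunction of just the local formulas, which is itself a $t$-local formula with free variables among $x_1,\dots,x_p$. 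Then ${\rm FO}_p^{\rm local}$-convergence (respectively ${\rm FO}^{\rm local}$-convergence) gives convergence of $\langle\psi',\mathbf A_n\rangle$, and assembling the pieces through the inclusion--exclusion identity yields convergence of $\langle\phi,\mathbf A_n\rangle$. This proves ${\rm FO}_p$-convergence, and letting $p$ range over $\bbbn$ gives ${\rm FO}$-convergence.

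The main obstacle, and the place where Theorem~\ref{thm:gaifman} is genuinely needed rather than just invoked, is the interplay between the local formulas $\xi_s$ and the Gaifman sentences: a priori the $\xi_s$ may involve free variables not all of which lie in $\{x_1,\dots,x_p\}$ (they can involve $x_{i_1},\dots,x_{i_s}$ with indices among the original $p$, which is fine) but more delicately one must check that discarding the sentence part really does leave a formula whose free variables are contained in the original $\{x_1,\dots,x_p\}$, so that ${\rm FO}_p^{\rm local}$-convergence — and not the full ${\rm FO}^{\rm local}$-convergence — suffices; this is exactly the point that yields the refined ``Precisely'' clause. One has to be slightly careful that the $\xi_s$ produced by Gaifman's theorem for a formula with free variables in $\{x_1,\dots,x_p\}$ indeed only use variables from that set (they do, since Gaifman's construction never introduces new free variables), and that the measure-theoretic inclusion--exclusion over the finitely many clauses is uniform in $n$ (it is, since the number of clauses depends only on $\phi$, not on $n$). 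Once these bookkeeping points are handled the argument is routine, so I expect the proof to be short modulo Theorem~\ref{thm:gaifman}.
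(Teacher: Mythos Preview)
Your proposal is correct and matches the approach the paper indicates: the proof is not spelled out here but is cited from \cite{CMUC} with the remark that it ``essentially uses Theorem~\ref{thm:gaifman}'', and your argument is precisely the expected deduction from Gaifman locality. One small streamlining you might consider: the DNF plus inclusion--exclusion step is not needed, since once ${\rm FO}_0$-convergence fixes the eventual truth value of each Gaifman sentence you can substitute these constants directly into the finite Boolean combination produced by Theorem~\ref{thm:gaifman}, yielding a single local formula $\psi\in{\rm FO}_p^{\rm local}$ with $\langle\phi,\mathbf A_n\rangle=\langle\psi,\mathbf A_n\rangle$ for all large $n$.
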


Of course, if we constrain the class of structures on which we study limits, we get stronger reductions.
For example, we now prove that in the case of a sequence of vertex-transitive structures in a wide class, most of the hierarchy of $X$-convergence collapses.
Recall that a class of graphs $\mathcal{C}$ is {\em wide} if,  for every
integers $d,m$ there is an integer $N(d,m)$ such that every graph $G\in\mathcal C$ with order at least $N(d,m)$ contains a subset of $m$ vertices pairwise at distance at least $d$ \cite{Atserias2005}. (In particular, every class of graphs with bounded degree is wide.)

\begin{lemma}
\label{lem:vtw}
For every sentence $\theta$ there exists constants $r,m$ and a formula $\phi\in{\rm FO}_1^{\rm local}$ such that for every vertex-transitive $\sig$-structure $\mathbf A$ with at least $m$ vertices pairwise at distance greater than $2r$ it holds
$$\mathbf A\models\theta\quad\iff\quad\mathbf A\models\exists x\,\phi(x)\quad\iff\quad\forall x\,\phi(x)\quad\iff\quad\langle\phi,\mathbf A\rangle=1.$$
\end{lemma}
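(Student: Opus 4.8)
The plan is to combine Gaifman's locality theorem (Theorem~\ref{thm:gaifman}) with the homogeneity coming from vertex-transitivity. First I would apply Theorem~\ref{thm:gaifman} to the sentence $\theta$, obtaining integers $r,t,m$ so that $\theta$ is equivalent to a Boolean combination of sentences of the shape~\eqref{eq:gl}, i.e.\ sentences of the form $\exists y_1\dots\exists y_k\,(\bigwedge_{i<j}{\rm dist}(y_i,y_j)>2r\wedge\bigwedge_i\psi_s(y_i))$ with each $\psi_s$ being $r$-local and $k\le m$. (Since $\theta$ is a sentence, only sentences of this form appear, no pure local formulas with free variables.) The key observation is that in a vertex-transitive structure $\mathbf A$, whether $\mathbf A\models\psi_s(v)$ does not depend on $v$: the automorphism group acts transitively, and $r$-locality means satisfaction depends only on the isomorphism type of the $r$-ball around $v$, which is the same for all $v$. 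So for each $r$-local formula $\psi_s(x)$ occurring, either $\mathbf A\models\forall x\,\psi_s(x)$ or $\mathbf A\models\neg\exists x\,\psi_s(x)$.

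Next I would exploit the hypothesis that $\mathbf A$ has at least $m$ vertices pairwise at distance $>2r$. If $\mathbf A\models\forall x\,\psi_s(x)$, then taking $y_1,\dots,y_k$ (with $k\le m$) to be any $k$ of these far-apart vertices witnesses the existential sentence~\eqref{eq:gl}; conversely if $\mathbf A\models\neg\exists x\,\psi_s(x)$ then~\eqref{eq:gl} fails trivially. Hence each Gaifman sentence~\eqref{eq:gl} appearing in the Boolean combination is, on such $\mathbf A$, equivalent to the sentence $\forall x\,\psi_s(x)$ (equivalently $\exists x\,\psi_s(x)$, equivalently $\langle\psi_s,\mathbf A\rangle=1$, by vertex-transitivity again: $\langle\psi_s,\mathbf A\rangle\in\{0,1\}$ and equals $1$ iff every vertex satisfies $\psi_s$). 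Substituting these equivalences into the Boolean combination, $\mathbf A\models\theta$ becomes equivalent to the truth of a Boolean combination $\beta$ of the formulas $\psi_s(x)$, evaluated at any single vertex $x$ (they are all "constant" on $\mathbf A$, so the Boolean combination is too). Set $\phi(x):=\beta(\psi_1(x),\dots,\psi_\ell(x))$; this is again $r$-local with a single free variable, so $\phi\in{\rm FO}_1^{\rm local}$, and on the relevant $\mathbf A$ we get $\mathbf A\models\theta\iff\mathbf A\models\forall x\,\phi(x)\iff\mathbf A\models\exists x\,\phi(x)\iff\langle\phi,\mathbf A\rangle=1$, as claimed. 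The constants $r,m$ are exactly those delivered by Theorem~\ref{thm:gaifman} (with $m$ playing the role of the Gaifman $m$, and $r$ the Gaifman $r$).

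The main obstacle, and the point that needs the most care, is the bookkeeping in the Boolean-combination step: one must be sure that the $r$-local single-variable formulas $\psi_s$ can be uniformly Boolean-combined into one formula $\phi(x)$ while preserving $r$-locality, and that the logical equivalence "$\eqref{eq:gl}\leftrightarrow\forall x\,\psi_s(x)$" is valid precisely under the distance hypothesis (one direction uses the $m$ far-apart vertices, the other is automatic). A secondary subtlety is making the three equivalences $\forall x\,\phi(x)$, $\exists x\,\phi(x)$, $\langle\phi,\mathbf A\rangle=1$ genuinely equivalent rather than merely implied in one direction — this is where vertex-transitivity is used a second time, to collapse $\exists$ and $\forall$ and to force the Stone pairing to be $0$-$1$ valued. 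No infinitude or measure-theoretic input is needed; it is a purely syntactic argument resting on Theorem~\ref{thm:gaifman} plus the orbit-homogeneity of local types under a transitive automorphism group.
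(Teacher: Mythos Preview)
Your proposal is correct and follows essentially the same route as the paper: apply Gaifman's theorem to $\theta$, use vertex-transitivity to make each local $\psi_s$ constant on $\mathbf A$, use the $m$ far-apart vertices to reduce each Gaifman sentence~\eqref{eq:gl} to $\exists x\,\psi_s(x)$, and then fold the Boolean combination into a single $\phi\in{\rm FO}_1^{\rm local}$. The only cosmetic difference is that the paper carries out the final combination step via the identity $(\exists x_1\,\phi_1(x_1))\wedge(\exists x_2\,\phi_2(x_2))\iff\exists x\,(\phi_1(x)\wedge\phi_2(x))$ under vertex-transitivity, whereas you substitute directly into the Boolean combination $\beta$; the content is the same.
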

\begin{proof}
If $\mathbf A$ is vertex-transitive then for every $u,v\in A$ and every $\phi\in {\rm FO}_1$ it holds $\mathbf{A}\models \phi(u)$ if and only if $\mathbf{A}\models \phi(v)$. Hence  $\mathbf{A}\models \exists x\,\phi(x)$ is equivalent to
$\mathbf{A}\models \forall x\,\phi(x)$.

According to Theorem~\ref{thm:gaifman}, the sentence $\theta$ is equivalent to a Boolean combination of sentences of the form
\begin{align*}
\quad&\mathbf{A}\models\exists
y_1\dots\exists y_m\biggl(\bigwedge_{1\leq i<j\leq m}{\rm
dist}(y_i,y_j)>2r\wedge\bigwedge_{1\leq i\leq
m}\psi(y_i)\biggr)\\
\iff &\mathbf{A}\models\exists
y_1\dots\exists y_m\bigl(\bigwedge_{1\leq i<j\leq m}{\rm
dist}(y_i,y_j)>2r\bigr)\wedge\exists y\bigwedge_{1\leq i\leq
m}\psi(y)\\
\iff&\mathbf{A}\models\exists y\bigwedge_{1\leq i\leq m}\psi(y)
\end{align*}
(where the first equivalence comes from vertex-transitivity, and the second from the existence of $m$ vertices far apart.)
Moreover, by vertex transitivity, for every $\phi_1,\phi_2\in{\rm FO}_1^{\rm local}$ it holds
$$\mathbf{A}\models (\exists x_1\, \phi_1(x_1))\wedge(\exists x_1\, \phi_2(x_2))
\quad\iff\quad\mathbf{A}\models (\exists x\, \phi_1(x)\wedge\phi_2(x)).
$$
It follows that the Boolean combination of the conditions 
$\mathbf{A}\models\exists y \bigwedge_{1\leq i\leq m}\psi(y)$ is equivalent to the condition $\mathbf{A}\models\exists x\,\phi(x)$ for some $\phi\in{\rm FO}_1^{\rm local}$.
\end{proof}

As a consequence, we obtain the following:

\begin{theorem}
\label{thm:vt}
Let $\mathcal C$ be a wide class of vertex-transitive $\sig$-structures. Then, for every sequence $(\mathbf A_n)_{n\in\bbbn}$ with $\mathbf A_n\in\mathcal{C}$ and $|A_n|\rightarrow\infty$ the following conditions are equivalent:
\begin{enumerate}
\item\label{it:1} the sequence $(\mathbf A_n)_{n\in\bbbn}$ is ${\rm FO}$-convergent,
\item\label{it:2} the sequence $(\mathbf A_n)_{n\in\bbbn}$ is ${\rm FO}_1^{\rm local}$-convergent,
\item\label{it:3} for every $\phi\in{\rm FO}_1^{\rm local}$, either  $(\exists x)\,\phi(x)$ is satisfied by all but finitely many $\mathbf{A}_n$ or  $(\exists x)\,\phi(x)$ is satisfied by only finitely many $\mathbf{A}_n$,
\item\label{it:4} the sequence $(\mathbf A_n)_{n\in\bbbn}$ is ${\rm FO}_0$-convergent.
\end{enumerate} 
\end{theorem}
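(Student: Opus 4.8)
The plan is to prove the equivalence of the four conditions by establishing the cycle of implications $(1)\Rightarrow(4)\Rightarrow(3)\Rightarrow(2)\Rightarrow(1)$, using Lemma~\ref{lem:vtw} and Theorem~\ref{thm:locel} as the main tools, together with the hypothesis that $\mathcal C$ is wide and consists of vertex-transitive structures.

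First I would dispense with the easy implications. The implication $(1)\Rightarrow(4)$ is immediate since ${\rm FO}_0$ is a fragment of ${\rm FO}$, and $(1)\Rightarrow(2)$ is equally immediate (so in fact $(1)$ trivially implies the conjunction of $(2)$ and $(4)$). For $(2)\Rightarrow(3)$: if $(\mathbf A_n)$ is ${\rm FO}_1^{\rm local}$-convergent, then for each $\phi\in{\rm FO}_1^{\rm local}$ the sequence $\langle(\exists x)\phi(x),\mathbf A_n\rangle$ converges; but $(\exists x)\phi(x)$ is a sentence, so this Stone pairing only takes values in $\{0,1\}$, and a convergent $\{0,1\}$-valued sequence is eventually constant — which is exactly condition $(3)$. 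So the whole burden of the proof lies in $(3)\Rightarrow(1)$, or equivalently, in view of Theorem~\ref{thm:locel}, in showing that $(3)$ implies both ${\rm FO}^{\rm local}$-convergence and ${\rm FO}_0$-convergence.

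The key step is to use Lemma~\ref{lem:vtw} to reduce sentence satisfaction to ${\rm FO}_1^{\rm local}$ satisfaction, exploiting that $|A_n|\to\infty$ and $\mathcal C$ is wide. Fix a sentence $\theta$. By Lemma~\ref{lem:vtw} there are constants $r,m$ and a formula $\phi_\theta\in{\rm FO}_1^{\rm local}$ such that, for every vertex-transitive $\mathbf A$ containing $m$ vertices pairwise at distance greater than $2r$, $\mathbf A\models\theta$ iff $\mathbf A\models(\exists x)\phi_\theta(x)$. Since $\mathcal C$ is wide and $|A_n|\to\infty$, there is $N_0$ such that for all $n\ge N_0$ the structure $\mathbf A_n$ contains $m$ vertices pairwise at distance greater than $2r$; hence for $n\ge N_0$, $\mathbf A_n\models\theta$ iff $\mathbf A_n\models(\exists x)\phi_\theta(x)$. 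By $(3)$ the latter is eventually constant, so $\langle\theta,\mathbf A_n\rangle$ is eventually constant; as $\theta$ was arbitrary this gives ${\rm FO}_0$-convergence, establishing $(3)\Rightarrow(4)$ directly and providing half of what Theorem~\ref{thm:locel} needs. For ${\rm FO}^{\rm local}$-convergence, I would invoke the local part of Lemma~\ref{lem:vtw}'s proof mechanism: any $\phi\in{\rm FO}_1^{\rm local}$ has, by vertex-transitivity, $\langle\phi,\mathbf A_n\rangle\in\{0,1\}$ with value $1$ precisely when $\mathbf A_n\models(\exists x)\phi(x)$, so $(3)$ makes $\langle\phi,\mathbf A_n\rangle$ eventually constant; this is ${\rm FO}_1^{\rm local}$-convergence, i.e.\ $(3)\Rightarrow(2)$. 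Combining $(3)\Rightarrow(2)$ with $(2)\Rightarrow$ (all of ${\rm FO}^{\rm local}$) requires passing from single-variable local formulas to arbitrary local formulas.

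The main obstacle I anticipate is precisely this last passage — deducing full ${\rm FO}^{\rm local}$-convergence (local formulas in arbitrarily many free variables) from ${\rm FO}_1^{\rm local}$-convergence in the vertex-transitive setting. One cannot quote Theorem~\ref{thm:locel} blindly, since that theorem reduces ${\rm FO}_p$ to ${\rm FO}_p^{\rm local}$ plus ${\rm FO}_0$, not ${\rm FO}_p^{\rm local}$ to ${\rm FO}_1^{\rm local}$. The resolution is again vertex-transitivity plus wideness: a $t$-local formula $\xi(x_{i_1},\dots,x_{i_s})$ has Stone pairing computed by integrating over $s$-tuples, but by vertex-transitivity one can relocate the first variable arbitrarily, and since $\mathcal C$ is wide, for large $n$ the remaining variables can be placed so far from each other and from $x_{i_1}$ that the $t$-neighborhoods decouple; thus $\langle\xi,\mathbf A_n\rangle$ is governed, up to a lower-order term vanishing as $|A_n|\to\infty$, by a product of single-variable local quantities. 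Making the error analysis precise — controlling the probability that a uniformly random $s$-tuple has two coordinates within distance $2t$, and showing this tends to $0$ using wideness — is the technical heart of the argument. Once that is in hand, $(3)$ forces each single-variable factor to stabilize, hence $\langle\xi,\mathbf A_n\rangle$ converges; together with the already-established ${\rm FO}_0$-convergence and Theorem~\ref{thm:locel} this yields $(3)\Rightarrow(1)$, closing the cycle.
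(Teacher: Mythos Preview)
Your approach is essentially the paper's: both use vertex-transitivity for the equivalence of (2) and (3), Lemma~\ref{lem:vtw} for $(3)\Rightarrow(4)$, Gaifman locality together with wideness to pass from ${\rm FO}_1^{\rm local}$-convergence to full ${\rm FO}^{\rm local}$-convergence, and Theorem~\ref{thm:locel} to recover (1). The decoupling argument you sketch for $(2)\Rightarrow{\rm FO}^{\rm local}$ is precisely what the paper compresses into the phrase ``a consequence of Gaifman locality theorem and wideness of $\mathcal C$''; the mechanism you identify (in a wide vertex-transitive class the balls of any fixed radius have size $o(|A_n|)$, so random tuples are pairwise far apart with probability tending to $1$, and the local formula then factors through single-variable local data) is the right one.

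There is, however, a logical gap in your write-up. Your announced cycle is $(1)\Rightarrow(4)\Rightarrow(3)\Rightarrow(2)\Rightarrow(1)$, but you never establish $(4)\Rightarrow(3)$: what you actually prove is $(1)\Rightarrow(2)\Rightarrow(3)\Rightarrow(1)$ together with $(1)\Rightarrow(4)$ and $(3)\Rightarrow(4)$, leaving $(4)$ with no outgoing implication, so the four conditions are not yet shown equivalent. The fix is trivial --- the paper simply notes that $(4)\Rightarrow(3)$ is immediate, since $(\exists x)\,\phi(x)$ is a sentence and ${\rm FO}_0$-convergence means eventual constancy of its truth value --- but you should state it. A smaller point: in your $(2)\Rightarrow(3)$ step you claim that ${\rm FO}_1^{\rm local}$-convergence yields convergence of $\langle(\exists x)\phi(x),\mathbf A_n\rangle$; strictly it yields convergence of $\langle\phi,\mathbf A_n\rangle$, and you must invoke vertex-transitivity \emph{first} to identify the two (this is what makes $\langle\phi,\mathbf A_n\rangle$ take values in $\{0,1\}$).
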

\begin{proof}
That~\eqref{it:1} implies~\eqref{it:2} is trivial. That~\eqref{it:2} is equivalent to~\eqref{it:3} follows from vertex-transitivity. That~\eqref{it:3} implies~\eqref{it:4} follows from Lemma~\ref{lem:vtw} and that~\eqref{it:4} implies~\eqref{it:3} is trivial. That~\eqref{it:2} implies ${\rm FO}^{\rm local}$ convergence is a consequence of Gaifman locality theorem and wideness of $\mathcal C$. Finally, that~\eqref{it:4} together with ${\rm FO}^{\rm local}$ convergence implies~\eqref{it:1} follows from Theorem~\ref{thm:locel}.
\end{proof}

Let us formulate the following consequence for Cayley graphs.

\begin{corollary}
Let $d\in\bbbn$, and
let $(G_n)_{n\in\bbbn}$ be a sequence of (edge-colored directed) Cayley graphs generated by $d$ elements of groups with orders going to infinity. Then the following conditions are equivalent:
\begin{enumerate}
\item\label{itc:1} the sequence $(G_n)_{n\in\bbbn}$ is ${\rm FO}$-convergent,
\item\label{itc:2} the sequence $(G_n)_{n\in\bbbn}$ is ${\rm FO}_1^{\rm local}$-convergent,
\item\label{itc:4} the sequence $(G_n)_{n\in\bbbn}$ is ${\rm FO}_0$-convergent.
\end{enumerate} 
\end{corollary}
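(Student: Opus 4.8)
The plan is to obtain the corollary as an immediate instance of Theorem~\ref{thm:vt}, applied to the class $\mathcal C$ of all (edge-colored directed) Cayley graphs on $d$ generators, viewed as $\sigma$-structures for the signature $\sigma$ having one binary relation symbol $E_i$ per generator $s_i$ (or per color). Two hypotheses must be verified. First, \emph{vertex-transitivity}: if $G_n$ is the Cayley graph of a group $\Gamma_n$ with generators $s_1,\dots,s_d$, then for each $g\in\Gamma_n$ the left-translation $x\mapsto gx$ is an automorphism of $G_n$, since $x\mathbin{E_i}y$ holds iff $y=xs_i$ iff $gy=(gx)s_i$ iff $(gx)\mathbin{E_i}(gy)$; these translations act transitively on the vertex set $\Gamma_n$, so $G_n$ is vertex-transitive as a $\sigma$-structure. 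Second, \emph{wideness of $\mathcal C$}: every vertex of $G_n$ has exactly one out-neighbour and one in-neighbour in each color, so the Gaifman graph of $G_n$ has maximum degree bounded by a function of $d$ alone; since every class of bounded degree is wide (as recalled before Lemma~\ref{lem:vtw}), so is $\mathcal C$.

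It remains only to note that the orders of the $\Gamma_n$ tend to infinity, hence $|A_n|=|\Gamma_n|\to\infty$, so Theorem~\ref{thm:vt} applies. Its equivalent conditions~\eqref{it:1}, \eqref{it:2}, \eqref{it:4} are, verbatim, the conditions~\eqref{itc:1}, \eqref{itc:2}, \eqref{itc:4} of the corollary, which proves the claim. (One could equally transcribe condition~\eqref{it:3} of Theorem~\ref{thm:vt} to obtain a fourth equivalent formulation for Cayley graphs, but the corollary records only these three.)

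There is essentially no obstacle here, the content being already carried by Theorem~\ref{thm:vt} (whose proof in turn rests on Lemma~\ref{lem:vtw}, Gaifman locality, and the reduction Theorem~\ref{thm:locel}). The only genuine --- and trivial --- checks are that a directed edge-colored Cayley graph really is vertex-transitive as a structure in the signature with one binary symbol per generator, and that its Gaifman graph has degree bounded purely in terms of $d$; both are immediate from the definitions.
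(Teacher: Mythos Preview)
Your proposal is correct and matches the paper's intent: the paper states the corollary immediately after Theorem~\ref{thm:vt} with no proof, introducing it merely as ``the following consequence for Cayley graphs,'' so deducing it from Theorem~\ref{thm:vt} is precisely the expected argument. Your verification that Cayley graphs are vertex-transitive (via left translations) and that the class is wide (since the Gaifman graph has degree at most $2d$) supplies exactly the two hypotheses needed, and nothing more is required.
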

We have already noticed (Theorem~\ref{thm:locel}) that ${\rm FO}$-convergence can be reduced to  ${\rm FO}_0$-convergence and to
 ${\rm FO}^{\rm local}$-convergence (thanks to Gaifman locality theorem).
 Furthermore, for wide classes (like classes with bouded degree), 
 ${\rm FO}^{\rm local}$-convergence is equivalent to 
  ${\rm FO}_1^{\rm local}$-convergence. This explains the particular importance of the fragments ${\rm FO}^{\rm local}$ and ${\rm FO}_1^{\rm local}$.
  
\section{Limit Connectivity}
\label{sec:conn}
  The importance of the fragments reviewed in the previous section also appears in the study of the limit notion of ``connected components'' for a convergent sequence of $\sig$-structures. Note that connectivity of structures
  (which fails to be first-order definable) plays an important and non-trivial aspect of convergence of sparse graphs. The relative size and the speed of convergence plays a role here. Part of this problem is expressed by the following notions:

For a $\sigma$-structure $\mathbf A$, a vertex $v\in A$, and an integer $d$, we define $B_d(\mathbf A,v)$ as the substructure of $\mathbf A$ induced by vertices at distance at most $d$ from $v$ in $\mathbf A$ (that is at distance at most $d$ from $v$ in the Gaifman graph of $\mathbf A$).
 
 \begin{definition}[Residual sequence]
A sequence $(\mathbf A_n)_{n\in\bbbn}$ of $\sig$-structures is {\em residual} if
$$\lim_{d\rightarrow\infty} \limsup_{n\rightarrow\infty}\sup_{v\in A_n}\nu_{\mathbf A_n}(B_d(\mathbf A_n,v))=0.$$
\end{definition}
In other words,  a sequences is residual if its limit has only zero-measure connected components
\begin{definition}
A sequence $(\mathbf A_n)_{n\in\bbbn}$ of $\sig$-structures is {\em non-dispersive} if
$$\lim_{d\rightarrow\infty} \liminf_{n\rightarrow\infty}\sup_{v\in A_n} \nu_{\mathbf A_n}(B_d(\mathbf A_n,v))=1.$$
In other words,  a non-dispersive sequence is a sequence whose limit is essentially connected.

For a $\sigma$-structure $\mathbf A$ and a vertex $\rho\in A$, we denote by $(\mathbf A,\rho)$ the rooting of the structure $\mathbf A$ at $\rho$, that is the $\sigma^\bullet$-structure (where $\sigma^\bullet$ is obtained by adding a unary symbol $R$ to signature $\sigma$) obtained from $\mathbf A$ by putting exactly $\rho$ in relation $R$.

In the case of rooted structures, we usually want a stronger statement that the structures remain concentrated around their roots:
a sequence $(\mathbf A_n,\rho_n)_{n\in\bbbn}$ of rooted $\sig$-structures is {\em $\rho$-non-dispersive} if
$$\lim_{d\rightarrow\infty} \liminf_{n\rightarrow\infty}\nu_{\mathbf A_n}(B_d(\mathbf A_n,\rho_n))=1.$$
\end{definition}

The notion of non-dispersiveness is only an approximation of  connectiveness. As such it is not supported by a single equivalence relation. However, we can introduce the notion of a component relation system, which approximates component equivalence.

\begin{definition}
A {\em component relation system} for a class $\mathcal C$ of $\sig$-structures is a sequence
$\varpi_d$ of equivalence relations such that for every $d\in\bbbn$ and for 
every $\mathbf A\in\mathcal C$ there is a partition
of the $\varpi_d$-equivalence classes of $A$ into two parts $\mathcal E_0(\varpi_d,\mathbf A)$ and
$\mathcal E_+(\varpi_d,\mathbf A)$ such that:
\begin{itemize}
	\item every class in $\mathcal E_0(\varpi_d,\mathbf A)$ is a singleton;
	\item $\nu_{\mathbf A}(\bigcup\mathcal E_0(\varpi_d,\mathbf A))<\epsilon(d)+\eta(|A|)$ (where $\lim_{d\rightarrow\infty}\epsilon(d)=\lim_{n\rightarrow\infty}\eta(n)=0$);
	\item two vertices $x,y$ in $\bigcup\mathcal E_+(\varpi_d,\mathbf A)$ belong to a same connected component of $\mathbf A$ if and only if $\mathbf A\models\varpi_d(x,y)$ (i.e. iff $x$ and $y$ belong to a same class).
\end{itemize}
\end{definition}

\begin{definition}[\cite{NPOM1arxiv}]
A family of sequence $(\mathbf A_{i,n})_{n\in\bbbn}\ (i\in I)$ 
of $\sig$-structures is
{\em uniformly elementarily convergent} if, for every formula
$\phi\in{\rm FO}_1(\sig)$ there is an integer $N$ such that it holds
$$
\forall i\in I,\ \forall n'\geq n\geq N,\quad 
(\mathbf A_{i,n}\models (\exists x)\phi(x))\Longrightarrow
(\mathbf A_{i,n'}\models (\exists x)\phi(x)).
$$
\end{definition}
(Note that if a family $(\mathbf A_{i,n})_{n\in\bbbn}\ (i\in I)$
of sequences is uniformly elementarily convergent, then
each sequence $(\mathbf A_{i,n})_{n\in\bbbn}$ is elementarily convergent.)

Now that we have introduced all the necessary notions, we can state
the following fundamental result, which allows to study ${\rm FO}^{\rm local}$-convergent sequences (and more generally ${\rm FO}$-convergent sequences) of disconnected $\sig$-structures essentially component-wise.

\begin{theorem}[Extended comb structure \cite{NPOM2arxiv}]
\label{thm:ecomb}
Let $(\mathbf A_n)_{n\in\bbbn}$ be an ${\rm FO}^{\rm local}$-convergent sequence of
finite $\sig$-structures with component relation system $\varpi_d$.

Then there exist $I\subseteq\bbbn\cup\{0\}$ and, for each $i\in I$, a real $\alpha_i$ and a sequence
$(\mathbf B_{i,n})_{n\in\bbbn}$ of $\sig$-structures, such that
$\mathbf A_n=\bigcup_{i\in I}\mathbf B_{i,n}$ (for all $n\in\bbbn$), $\sum_{i\in{I}}\alpha_i=1$, and 
for each $i\in I$ it holds
\begin{itemize}
	\item $\alpha_i=\lim_{n\rightarrow\infty}\frac{|\mathbf B_{i,n}|}{|\mathbf A_n|}$, and
	$\alpha_i>0$ if $i\neq 0$;
	\item if $i=0$ and $\alpha_0>0$ then $(\mathbf B_{i,n})_{n\in\bbbn}$ is ${\rm FO}^{\rm local}$-convergent and
	residual;
\item if $i>0$ then $(\mathbf B_{i,n})_{n\in\bbbn}$ is ${\rm FO}^{\rm local}$-convergent
	 and non-dispersive.
	\end{itemize}
Moreover, if $(\mathbf A_n)_{n\in\bbbn}$ is ${\rm FO}$-convergent we can require
the family $\{(\mathbf B_{i,n})_{n\in\bbbn}: i\in I\}$ to be uniformly elementarily-convergent.
\end{theorem}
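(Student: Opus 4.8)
The plan is to use the component relation system together with the ball operators $B_d$ to make the connected components of the $\mathbf A_n$ accessible to a bounded amount of first-order information, to extract the boundedly many components of non-vanishing local mass, to observe that ${\rm FO}^{\rm local}$-convergence forces the relevant data (number, sizes, local shapes) of these components to converge, and then to collect them into the teeth $\mathbf B_{i,n}$ ($i>0$) and to dump the rest into the residual spine $\mathbf B_{0,n}$. The facts about ${\rm FO}^{\rm local}$-convergence that make this work are elementary. For $k\in\bbbn$ the property ``the connected component of $x$ has at most $k$ vertices'' is an ${\rm FO}_1^{\rm local}$-formula (it demands $\le k$ vertices containing $x$ and closed under Gaifman-adjacency); for $d,k\in\bbbn$ the property ``$|B_d(\mathbf A,x)|\ge k$'' is $d$-local; and $\theta_{d,p}:=\bigwedge_{1\le i<j\le p}{\rm dist}(x_i,x_j)\le 2d$ is $2d$-local. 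Hence the fraction of vertices in components of bounded size converges, the numbers $\langle\theta_{d,p},\mathbf A_n\rangle$ converge, and, by taking $p$-th roots and letting $p,d\to\infty$, so does the largest local mass $\lim_d\limsup_n\sup_{v}\nu_{\mathbf A_n}(B_d(\mathbf A_n,v))$; iterating this kind of computation, the whole multiset of limiting sizes of the locally concentrated components of $(\mathbf A_n)_n$, together with their rooted local limit shapes, is determined by the ${\rm FO}^{\rm local}$-limit. This is what will allow the decomposition to converge along the entire sequence, and not merely a subsequence.

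Now fix $\delta>0$ and $d\in\bbbn$. At most $1/\delta$ connected components of $\mathbf A_n$ contain a vertex $v$ with $\nu_{\mathbf A_n}(B_d(\mathbf A_n,v))\ge\delta$, since the corresponding balls lie in distinct components and are pairwise disjoint; call these the $(d,\delta)$-heavy components. By the axioms of the component relation system, the $\varpi_d$-classes in $\mathcal E_+(\varpi_d,\mathbf A_n)$ refine the partition of $A_n$ into connected components while all but a $\nu_{\mathbf A_n}$-mass $<\epsilon(d)+\eta(|A_n|)$ is covered by them, so that, up to vanishing mass, a component that is not $(d,\delta)$-heavy decomposes into $\varpi_d$-classes each of $\nu_{\mathbf A_n}$-measure $<\delta$. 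Diagonalizing over $d\uparrow\infty$ and $\delta\downarrow 0$, extracting a subsequence so that the finitely-many-at-each-stage sizes converge, and then invoking the previous paragraph to see that the limiting data is uniquely pinned down by the ${\rm FO}^{\rm local}$-limit (so the chosen subsequence is cofinal), one obtains a countable $I\ni 0$, reals $\alpha_i\in[0,1]$ with $\sum_{i\in I}\alpha_i=1$ and $\alpha_i>0$ for $i\ne 0$, and, for each $i>0$ and each $n$, a connected component $C_{i,n}$ of $\mathbf A_n$ with $|C_{i,n}|/|A_n|\to\alpha_i$ and with convergent local statistics. Set $\mathbf B_{i,n}=C_{i,n}$ for $i>0$ and let $\mathbf B_{0,n}$ be the substructure of $\mathbf A_n$ induced on the remaining vertices. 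Then $\mathbf A_n=\bigcup_{i\in I}\mathbf B_{i,n}$; each $C_{i,n}$ ($i>0$) was selected because it carries a ball of $\nu_{\mathbf A_n}$-measure bounded below, so since a component has $\le|A_n|$ vertices that ball has $\nu_{\mathbf B_{i,n}}$-measure bounded below as well, and as its radius grows it exhausts the (connected) component, whence $(\mathbf B_{i,n})_n$ is non-dispersive; $(\mathbf B_{0,n})_n$ contains, by construction, no component with non-vanishing local mass (either $\alpha_0=0$ and there is nothing to prove, or $|\mathbf B_{0,n}|$ grows proportionally to $|A_n|$ and the relevant suprema vanish), so it is residual; and each piece is ${\rm FO}^{\rm local}$-convergent because its local statistics are carved out of those of $\mathbf A_n$ by the robust distance-and-threshold criterion that selected $C_{i,n}$, and these converge by the first paragraph. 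I expect the genuinely delicate point to be the \emph{labelling}, i.e.\ choosing coherently in $n$ which component of $\mathbf A_n$ is the $i$-th tooth when several teeth share the same limiting size; I would settle this by refining the selection with the rooted isomorphism type of the radius-$d$ ball about a canonically chosen base point of the component and letting $d\to\infty$, the convergence statements of the first paragraph guaranteeing that these invariants converge as well, remaining ties being broken once and for all by an arbitrary rule.

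Finally, for the ``moreover'' clause, assume in addition that $(\mathbf A_n)_n$ is ${\rm FO}$-convergent, hence by Theorem~\ref{thm:locel} also ${\rm FO}_0$-convergent. Given $\phi\in{\rm FO}_1(\sig)$, Gaifman's theorem (Theorem~\ref{thm:gaifman}) rewrites the sentence $\exists x\,\phi(x)$ as a Boolean combination of scattered-existential sentences of the form~\eqref{eq:gl} built from $r$-local formulas of quantifier rank bounded in terms of $\phi$; since $\mathbf A_n$ is the disjoint union of the $\mathbf B_{i,n}$ and a scattered family of witnesses may be assembled across distinct components, the truth of such a sentence in $\mathbf A_n$ is a fixed monotone Boolean function of which of the finitely many bounded-rank local ``component types'' occur, with what bounded multiplicity, among the $\mathbf B_{i,n}$. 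If some local existential statement held in $\mathbf B_{i,n}$ for infinitely many $n$ (over various $i$) yet failed in $\mathbf B_{i,n'}$ for arbitrarily large $n'>n$, then, there being only finitely many relevant component types, a König-lemma compactness argument would produce a first-order sentence whose truth value on $(\mathbf A_n)_n$ fails to converge, contradicting ${\rm FO}_0$-convergence. Hence no such statement exists, which is exactly uniform elementary convergence of the family $\{(\mathbf B_{i,n})_n:\ i\in I\}$.
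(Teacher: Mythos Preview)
The paper does not actually prove Theorem~\ref{thm:ecomb}: immediately after the statement it says ``The proof of Theorem~\ref{thm:ecomb} is elaborated as it needs a detailed analysis of the speed of growth of components and we refer to \cite{NPOM2arxiv} for proof.'' So there is no in-paper proof to compare your attempt against; the authors explicitly flag the argument as technical and defer it.

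That said, your sketch is along the right lines in spirit---extract the finitely many components of non-negligible asymptotic mass as the ``teeth'', dump the rest into a residual spine, and use the power-sum trick with the local formulas $\theta_{d,p}$ to recover the multiset of limiting component proportions from the ${\rm FO}^{\rm local}$-limit. However, the parts you yourself label as delicate are exactly where the work lies, and your sketch does not really close them. The interchange of the limits $d\to\infty$, $p\to\infty$, $n\to\infty$ in your first paragraph is asserted rather than justified; the phrase ``speed of growth of components'' in the paper's comment is precisely a warning that components whose diameter grows with $n$ make this interchange nontrivial, and this is where the component relation system $\varpi_d$ (with its error terms $\epsilon(d)$ and $\eta(|A|)$) has to do real work, not just the ball operators. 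Your labelling resolution (``refine by rooted ball type, break remaining ties arbitrarily'') is plausible but not an argument: two distinct teeth can have the same limiting size \emph{and} the same limiting local statistics, and then an arbitrary tie-break at each $n$ does not obviously yield ${\rm FO}^{\rm local}$-convergent sequences $(\mathbf B_{i,n})_n$---one needs to show that such ties can be broken \emph{coherently in $n$}. Finally, the ``moreover'' paragraph is essentially a statement of what needs to be proved dressed up with the words ``K\"onig-lemma compactness argument''; producing an actual sentence of ${\rm FO}_0$ whose truth value oscillates from a hypothetical failure of uniform elementary convergence is not automatic, since the offending $\phi\in{\rm FO}_1$ lives on a single tooth while ${\rm FO}_0$-convergence is only assumed for the whole $\mathbf A_n$.

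In short: the skeleton is reasonable and matches what one expects of the cited proof, but the paper's own remark should be taken seriously---the substance is in the parts you have waved at, and those would need to be written out before this counts as a proof.
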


The proof of Theorem~\ref{thm:ecomb} is elaborated as it needs a detailed analysis  of the speed of growth of components and we refer to 
\cite{NPOM2arxiv} for proof.
An important consequence of this theorem is to reduce the problem of the construction of  limit objects to the basic cases of residual sequences and non-dispersive sequences, see for instance Theorem~\ref{thm:hmod}.

\section{Limit Objects of Sparse Structures}
\label{sec:mod}
\subsection{Borel Structures}
Given a signature $\sigma$, 
 {\em Borel $\sigma$-structure} is a $\sigma$-structure $\mathbf A$, whose domain $A$ is a standard Borel space, such that:
 \begin{itemize}
 \item for every relational symbol $R\in\sigma$ with arity $r$, the set
 $$R^{\mathbf A}=\{(v_1,\dots,v_r)\in A^r:\ \mathbf A\models R(v_1,\dots,v_r\}$$
of $R$ in $\mathbf A$ is a Borel subset of $A^r$;
 \item for every functional symbol $f\in\sigma$ with arity $r$, the corresponding map  $f^{\mathbf A}:A^r\rightarrow A$   is a Borel map.
 \end{itemize}
 Note that Borel structures are a natural generalization of Borel graphs (see \cite{Kechris1999}).
 
 It is easily checked that if $\mathbf A$ is a Borel $\sigma$-structure and if $\nu$ is a probability measure on $A$, then for every quantifier-free formula $\phi\in {\rm QF}(\sigma)$, the set
 $\phi(\mathbf A)$ is Borel. Whence we can extend
 the definition of Stone bracket to pairs formed by a Borel structure $\mathbf A$ equipped with a probability measure $\nu$ and a quantifier free formula $\phi$ with free variables in $x_1,\dots,x_p$ by
 $$
 \langle\phi,(\mathbf A,\nu)\rangle=\nu^{\otimes p}(\phi(\mathbf A)).$$
 Then, we shall say that a Borel $\sigma$-structure $\mathbf A$ equipped with a probability measure $\nu$ is a {\em QF-limit} of a 
 sequence $(\mathbf F_n)_{n\in\bbbn}$ of finite $\sigma$-structures if for every quantifier free formula $\phi$ it holds
 $$
 \langle\phi,(\mathbf A,\nu)\rangle=\lim_{n\rightarrow\infty}
  \langle\phi,\mathbf F_n\rangle.
 $$
 \begin{example}
 Let $(G_n)_{n\in\bbbn}$ be a left convergent sequence with
$|G_n|\rightarrow\infty$.

Then there exists a Borel graph $\mathbf G$ with a probability measure $\nu$ such that $(\mathbf G,\nu)$ is a QF-limit of the
sequence  $(G_n)_{n\in\bbbn}$ if and only if the sequence 
$(G_n)_{n\in\bbbn}$ converges to a {\em random-free} graphon, that is a graph which is $\{0,1\}$-valued almost everywhere.
 \end{example}
 
 Some more exotic examples can be obtained by considering functional symbols:
 \begin{example}[\cite{QFlim}]
 \label{ex:tsl}
 A {\em tree semi-lattice} is a structure $\mathbf A$ whose signature only contains a binary functional symbol $\wedge$, which satisfies the following axioms:
 \begin{enumerate}
 \item the operation (defined by) $\wedge$  is associative, commutative, and idempotent;
 \item for every $x,y,z$ such that
 $x\wedge z=x$ and  $y\wedge z=y$ it holds
 $x\wedge y\in\{x,y\}$.
 \end{enumerate}

Every QF-convergent sequence
$(\mathbf T_n)_{n\in\bbbn}$ of tree semi-lattices 
with $k$-colored domain there exists a Borel tree semilattice $\mathbf L$ with $k$-colored domain and a probability measure $\nu$ such that $(\mathbf L,\nu)$ is a QF-limit of the sequence $(\mathbf T_n)_{n\in\bbbn}$.
 \end{example}
 
 \begin{example}
 In this example,
let $(G_n)_{n\in\bbbn}$ be a sequence of finite nonabelian groups with increasing orders. Assume each $G_n$ is either simple or is a symmetric group.
Then the sequence  $(G_n)_{n\in\bbbn}$ is QF-convergent and
${\rm GL}(2,\bbbr)$ (with Haar probability measure) is a QF-limit
of $(G_n)_{n\in\bbbn}$.

Indeed,
according to \cite{Dixon2003}, 
the probability that a non-trivial fixed word $w$ equals identity tends to $0$ as $n$ goes to infinity, and the same holds for the sequence of symmetric groups, according to \cite{Gamburd2009}. Thus, for every quantifier formula
$\phi(x_1,\dots,x_p)$, the probability that $\phi(x_1,\dots,x_p)$ will hold in $G_n$ for $n$ going to infinity for independent uniform random assignments of $x_1,\dots,x_p$ tends either to $1$ or $0$, depending on the fact that $\phi(x_1,\dots,x_p)$ holds trivially or not.
Also, according to \cite{Epstein1971}, the
the probability that a non-trivial fixed word $w$ equals identity is $0$ for 
a connected finite-dimensional nonsolvable Lie groups (for instance ${\rm GL}(2,\bbbr)$). Thus ${\rm GL}(2,\bbbr)$  is a QF-limit
of $(G_n)_{n\in\bbbn}$.
\end{example}

\subsection{Modelings}
We introduced in
\cite{NPOM1arxiv} --- as candidate for a possible limit object of sparse structures --- the notion of modeling,
which extends the notion of graphing introduced for bounded degree graphs.
A {\em relational sample space} is a relational structure $\mathbf A$ (with signature $\lambda$) 
with additional structure:
The domain $A$ of $\mathbf A$  is a standard Borel space
(with Borel $\sigma$-algebra $\Sigma_{\mathbf A}$)
 with the property that every subset of $A^p$ that is first-order definable 
 in ${\rm FO}(\lambda)$
  is measurable (in $A^p$ with respect to the product $\sigma$-algebra). 
	A {\em modeling} is a relational sample space equipped with a probability measure (denoted $\nu_{\mathbf A}$).
	For brevity we shall use the same letter $\mathbf A$  for  structure, relational sample space, and modeling.
The definition of modelings allows us to extend Stone pairing naturally to modelings:
the {\em Stone pairing} $\langle \phi,\mathbf A\rangle$ of a first-order formula $\phi$ 
(with free variables in $\{x_1,\dots,x_p\}$) and  
a modeling $\mathbf A$, 
is defined by 
$$
\langle \phi,\mathbf A\rangle=\nu^{\otimes p}(\phi(\mathbf A)),$$
where  
$$\phi(\mathbf A)=\{(v_1,\dots,v_p)\in A^p:\quad \mathbf A\models\phi(v_1,\dots,v_p)\}.$$
Note that every finite structure canonically defines a modeling (with same universe, discrete $\sigma$-algebra, and
uniform probability measure) and that in the definition above matches the definition of Stone pairing of a formula
and a finite structure introduced earlier.
Also note that every modeling $\sigma$-structure is obviously a Borel $\sigma$-structure, but that the converse does not have to hold in general, as projections of Borel sets are not Borel in general. 

For a fragment $X$ of ${\rm FO}$, we shall say that a sequence $(\mathbf A_n)_{n\in\bbbn}$ is {\em $X$-convergent} to a modeling $\mathbf L$ (or
that $\mathbf L$ is a {\em modeling $X$-limit} of $(\mathbf A_n)_{n\in\bbbn}$), and we shall note $\mathbf A_n\xrightarrow{X}\mathbf L$, if for every $\phi\in X$ it holds $\lim_{n\rightarrow\infty}\langle\phi,\mathbf{A}_n\rangle=\langle\phi,\mathbf{L}\rangle$.
We shall say that a class $\mathcal{C}$ of $\sig$-structures {\em has modeling $X$-limits} if,  every $X$-convergent sequence
$(\mathbf A_n)_{n\in\bbbn}$ of $\sig$-structures with $\mathbf A_n\in\mathcal C$ has a modeling $X$-limit. For instance,
we proved in \cite{NPOM1arxiv} that classes of graphs with bounded degrees and classes of graphs with bounded tree-depth have modeling ${\rm FO}$-limits, and we proved in \cite{NPOM2arxiv} that the class of rooted ($k$ vertex-colored) trees has modeling ${\rm FO}$-limits. However, the notion of modeling limit is, in some sense, only applicable to sparse structures:
\begin{theorem}[\cite{NPOM1arxiv}]
\label{thm:modnd}
Let $\mathcal C$ be a monotone class of graphs. 

If $\mathcal C$ has modeling ${\rm FO}$-limits then $\mathcal C$ is {\em nowhere dense}, that is: for every integer $p$ there exists an integer $N=N(\mathcal C,p)$ such that no graph in $\mathcal C$ contains the $p$ subdivision of $K_N$ as a subgraph.
\end{theorem}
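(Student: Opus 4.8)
The plan is to argue by contradiction: assume $\mathcal C$ is monotone, has modeling ${\rm FO}$-limits, but is \emph{somewhere dense}, and derive an impossibility from an entropy/counting obstruction. Since $\mathcal C$ is monotone and somewhere dense, there is a fixed integer $p$ such that $\mathcal C$ contains the $p$-subdivision of $K_N$ for every $N$; by monotonicity $\mathcal C$ then contains \emph{all} $p$-subdivisions of all graphs, and in particular it contains a class rich enough to interpret (via a fixed first-order interpretation using the subdivision vertices as ``edge markers'') all finite graphs. Concretely, let $H_n$ be a sequence of finite graphs on $[n]$ chosen so that the sequence of their $p$-subdivisions $H_n^{(p)}\in\mathcal C$ is ${\rm FO}$-convergent (pass to a subsequence using compactness of the space of measures, Theorem~\ref{thm:1}); then a modeling ${\rm FO}$-limit $\mathbf M\in\mathcal C$ of $(H_n^{(p)})$ exists by hypothesis.

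The key step is to extract from $\mathbf M$ a \emph{random-free}-type structure on the ``branch vertices'' and show it must encode a graphon that is $\{0,1\}$-valued a.e., contradicting a VC-dimension / measurability obstruction. The mechanism: the relation ``$x$ and $y$ are joined by an internally-$p$-subdivided path'' is first-order definable in $H_n^{(p)}$ (it is $\exists z_1\dots z_{p-1}$ a path of the right length through degree-$2$ vertices), hence corresponds to a measurable symmetric subset $E\subseteq M^2$ in the modeling; restricting $\nu_{\mathbf M}$ to the (positive-measure) set of branch vertices and normalizing yields a Borel graph whose edge set is a genuine measurable set. By the argument of \cite{NPOM1arxiv} based on Adler--Adler~\cite{Adler2013} (nowhere dense $\Leftrightarrow$ bounded VC dimension of FO-interpretations) together with the random-free characterization of hereditary classes~\cite{Lovasz2010}: the limits of the $H_n$ themselves are then forced to be random-free graphons. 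But we are free to choose the $H_n$ to converge (in the left / ${\rm QF}^-$ sense, which ${\rm FO}$-convergence of $H_n^{(p)}$ controls on the branch vertices) to \emph{any} prescribed graphon $W$ — in particular to $W\equiv 1/2$, which is not $\{0,1\}$-valued a.e. This is the contradiction.

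The main obstacle I expect is the bookkeeping that turns the FO-definable edge-markers back into a measure on the branch vertices that faithfully reproduces the intended limit graphon: one must check that (i) the set of branch vertices is first-order definable in $H_n^{(p)}$ uniformly (so it is measurable in $\mathbf M$ and has a limiting positive mass — here one should normalize $H_n$ to have $\Theta(n^2)$ edges so the $\Theta(n)$ branch vertices carry a bounded fraction of the $\Theta(n+n^2)\sim \Theta(n^2)$ vertices, or instead track densities relative to $|V(H_n^{(p)})|$ carefully), (ii) the modeling measure restricted to branch vertices is, in the limit, exchangeable so that Aldous--Hoover applies and produces a graphon, and (iii) because $\mathbf M$ is a \emph{modeling} (every FO-definable set measurable, not merely every QF-definable set as in a Borel structure), the induced edge relation is an honest measurable set, forcing randomfreeness in the sense of \cite{Lovasz2010}. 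Step (iii) is where the hypothesis ``modeling'' (as opposed to merely ``Borel structure'') is essential and where the VC-dimension input enters: a positive-probability measurable binary relation on a standard Borel space cannot simulate a class of unbounded VC dimension, but the family of bipartite-adjacency patterns realized inside $p$-subdivisions of all graphs has unbounded VC dimension precisely when the underlying class is somewhere dense. I would organize the write-up so that this VC-dimension lemma is quoted as a black box from \cite{Adler2013,Lovasz2010} and the combinatorial heart is just the interpretation of arbitrary graphs (equivalently, arbitrary graphons in the limit) inside $\mathcal C$.
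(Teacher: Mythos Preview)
Your high-level strategy matches what the paper indicates (the theorem is only stated here; the proof is in \cite{NPOM1arxiv}, and the paper summarizes it as combining the Adler--Adler VC-dimension characterization of nowhere dense classes with the Lov\'asz--Szegedy random-free characterization). The idea that a modeling limit forces a $\{0,1\}$-valued (random-free) limit object, while a somewhere dense monotone class must admit a non-random-free limit, is exactly the intended contradiction.

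However, your concrete implementation through $p$-subdivisions has a real gap, and it is precisely the one you flag in point (i) but do not resolve. If $H_n$ has $n$ vertices and $\Theta(n^2)$ edges, then $H_n^{(p)}$ has $\Theta(n^2)$ vertices, of which only $n$ are branch vertices. That is a fraction $\Theta(1/n)\to 0$, not ``a bounded fraction'' as you write. Consequently: restricting $\nu_{\mathbf M}$ to the (first-order definable) set of branch vertices yields a null measure, so there is nothing to normalize; and the formula ``$x$ and $y$ are joined by an internally $p$-subdivided path'', evaluated as a Stone pairing over \emph{all} pairs in $H_n^{(p)}$, has density $O(n^2)/|V(H_n^{(p)})|^2 = O(1/n^2)\to 0$. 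The interpreted graphs therefore converge to the zero graphon regardless of which $H_n$ you started from, and you never see $W\equiv 1/2$. No choice of edge density in $H_n$ fixes this: making $H_n$ sparse enough for branch vertices to keep positive mass forces $H_n$ itself to converge to the zero graphon.

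The way the cited proof circumvents this is not to hand-build the interpretation from the $p$-subdivision, but to invoke Adler--Adler \cite{Adler2013} to obtain, for a somewhere dense monotone class, a first-order formula $\phi(\bar x;\bar y)$ (generally with \emph{tuples} of free variables, i.e.\ an interpretation scheme of exponent $k>1$ in the sense of Definition~\ref{def:interp}) whose VC dimension over $\mathcal C$ is unbounded. One then applies a basic ${\rm FO}$-interpretation with exponent $k$; by Proposition~\ref{lemma:interpomega} this sends a modeling $\mathbf M$ to a modeling $\mathsf I(\mathbf M)$ on $M^k$, so the domain has full measure by construction and the measure-zero obstruction disappears. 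Lov\'asz--Szegedy \cite{Lovasz2010} then supplies a sequence in the interpreted class with a non-random-free left limit, while $\mathsf I(\mathbf M)$, being a Borel graph, can only be a QF-limit of a random-free sequence (cf.\ the Example preceding Example~\ref{ex:tsl}). In short, the VC-dimension input is not a side lemma bolted onto your $p$-subdivision picture; it is what replaces that picture and makes the densities come out right.
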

For more on nowhere dense graphs, we refer the reader to
\cite{ND_logic,Nevsetvril2010a,ND_characterization,Sparsity}. The importance of nowhere dense classes
and the strong relationship of this notion with first-order logic is exemplified by the recent result of
Grohe, Kreutzer, and Siebertz \cite{Grohe2013}, which states that (under a reasonable complexity theoretic assumption)
deciding first-order properties of graphs in a monotone class $\mathcal C$ is fixed-parameter tractable if and only if
$\mathcal C$ is nowhere dense. 
Motivated by strong model theoretical properties and manifold characterizations of nowhere dense classes, we conjectured that the above theorem is tight.
This may be seen as the most challenging problem of this paper.
\begin{mainconjecture}{1}
Every nowhere dense class of graphs has modeling ${\rm FO}$-limits.
\end{mainconjecture}

The problem of the existence of a modeling limit can be reduced to 
the study of ${\rm FO}^{\rm local}$-convergence, and then to two ``typical'' particular cases:

A modeling $\mathbf A$ with universe $A$ satisfies the {\em Finitary Mass Transport Principle} if, 
for every $\phi,\psi\in{\rm FO}_1(\sig)$ and every integers $a,b$ such that
$$\begin{cases}
\phi\entails (\exists^{\geq a}y)\,(x_1\sim y)\wedge\psi(y)\\
\psi\entails (\exists^{\leq b}y)\,(x_1\sim y)\wedge\phi(y)
\end{cases}$$
it holds
$$a\,\langle\phi,\mathbf A\rangle\leq b\,\langle\psi,\mathbf A\rangle.$$
It is clear that every finite structure satisfies the Finitary Mass Transport Principle, hence
every modeling ${\rm FO}$-limit of finite structures satisfies the Finitary Mass Transport Principle, too.

The following stronger version of this principle, which is also satisfied by every finite structure, does not
automatically hold in the limit.

A modeling $\mathbf A$ with universe $A$ satisfies the {\em Strong Finitary Mass Transport Principle} if, 
for every measurable subsets $X,Y$ of $A$, and every integers $a,b$, the following property holds:
\begin{quote}
If every $x\in X$ has at least $a$ neighbors in $Y$ and every $y\in Y$ has at most $b$ neighbors in $X$ then
$a\,\nu_{\mathbf A}(X)\leq b\,\nu_{\mathbf A}(Y)$.
\end{quote}

The importance of residual and non-dispersive sequences appears again in the following result, which complements Theorem~\ref{thm:ecomb}.
\begin{theorem}[\cite{NPOM2arxiv}]
\label{thm:hmod}
Let $\mathcal C$ be a hereditary class of structures.

Assume that for every $\mathbf A_n\in\mathcal C$ and every $\rho_n\in A_n$  ($n\in\bbbn$) the following
properties hold:
\begin{enumerate}
	\item  if $(\mathbf A_n)_{n\in\bbbn}$ is  ${\rm FO}_1^{\rm local}$-convergent and residual, then it has
a modeling ${\rm FO}_1^{\rm local}$-limit;
\item  if $(\mathbf A_n,\rho_n)_{n\in\bbbn}$ is ${\rm FO}^{\rm local}$-convergent and
$\rho$-non-dispersive  then it has
a modeling ${\rm FO}^{\rm local}$-limit.
\end{enumerate}

Then $\mathcal C$ has modeling ${\rm FO}$-limits.

Moreover, if in cases (1) and (2) the modeling limits satisfy the Strong Finitary Mass Transport Principle, then
$\mathcal C$ has modeling ${\rm FO}$-limits that satisfy the Strong Finitary Mass Transport Principle.
\end{theorem}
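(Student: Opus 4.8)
The plan is to combine the reduction Theorem~\ref{thm:locel} with the extended comb decomposition of Theorem~\ref{thm:ecomb}, and then to glue the component-wise modeling limits along a disjoint union. Fix an ${\rm FO}$-convergent sequence $(\mathbf A_n)_{n\in\bbbn}$ with $\mathbf A_n\in\mathcal C$; equipping it with a component relation system for $\mathcal C$ and applying Theorem~\ref{thm:ecomb} yields $I\subseteq\bbbn\cup\{0\}$, reals $\alpha_i$ with $\sum_{i\in I}\alpha_i=1$, and sequences $(\mathbf B_{i,n})_n$ with $\mathbf A_n=\bigcup_{i\in I}\mathbf B_{i,n}$, where for $i=0$ (if $\alpha_0>0$) the sequence is ${\rm FO}^{\rm local}$-convergent and residual, for $i>0$ it is ${\rm FO}^{\rm local}$-convergent and non-dispersive, and — because $(\mathbf A_n)_n$ is ${\rm FO}$-convergent — the whole family $\{(\mathbf B_{i,n})_n:i\in I\}$ is uniformly elementarily convergent. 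Since $\mathcal C$ is hereditary and each $\mathbf B_{i,n}$ is a union of connected components of $\mathbf A_n$, hence an induced substructure, we have $\mathbf B_{i,n}\in\mathcal C$ for all $i,n$.

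Next I would produce the component limits. For $i>0$, since $(\mathbf B_{i,n})_n$ is non-dispersive I pick roots $\rho_{i,n}\in B_{i,n}$ essentially maximizing $\nu_{\mathbf B_{i,n}}(B_d(\mathbf B_{i,n},v))$ over $v$; after passing to a single subsequence of $n$ (by a diagonal argument over the countable set $I$, which is harmless because the Stone pairings of the full sequence already converge) each rooted sequence $(\mathbf B_{i,n},\rho_{i,n})_n$ becomes ${\rm FO}^{\rm local}$-convergent and $\rho$-non-dispersive, so hypothesis~(2) furnishes a modeling ${\rm FO}^{\rm local}$-limit $\mathbf L_i$, which I regard henceforth as an unrooted modeling. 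For $i=0$ I observe that a residual sequence satisfies $\sum_c\nu(c)^2\le\max_c\nu(c)\to 0$, so two independent samples almost surely lie in distinct vanishing components; by locality of first-order logic over disjoint unions this makes ${\rm FO}^{\rm local}$-convergence of $(\mathbf B_{0,n})_n$ equivalent to its ${\rm FO}_1^{\rm local}$-convergence, and hypothesis~(1) gives a modeling ${\rm FO}_1^{\rm local}$-limit $\mathbf L_0$, which — being residual itself — is automatically an ${\rm FO}^{\rm local}$-limit. I then set $\mathbf L=\bigsqcup_{i\in I}\mathbf L_i$ with $\nu_{\mathbf L}=\sum_{i\in I}\alpha_i\,\nu_{\mathbf L_i}$; this is a countable disjoint union of standard Borel spaces, hence standard Borel, and it is a relational sample space because, by Gaifman's theorem (Theorem~\ref{thm:gaifman}), every first-order formula is a Boolean combination of local formulas — measurable in each block since local formulas see only within a component and each $\mathbf L_i$ is a relational sample space — and scattered local sentences, which cut out sets of measure $0$ or $1$.

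It remains to verify $\langle\phi,\mathbf L\rangle=\lim_n\langle\phi,\mathbf A_n\rangle$ for every $\phi\in{\rm FO}$; by Theorem~\ref{thm:locel} it suffices to treat $\phi\in{\rm FO}^{\rm local}$ and $\phi\in{\rm FO}_0$. For a $t$-local $\phi(x_1,\dots,x_p)$, sampling $p$ points from a disjoint union splits according to which block each point meets; as distinct blocks occupy distinct components their $t$-balls are disjoint and non-adjacent, so on both $\mathbf A_n$ and $\mathbf L$ the pairing $\langle\phi,\cdot\rangle$ is a sum over block-assignments of (a product of block masses) times (Stone pairings of smaller local formulas inside the blocks). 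The masses $|\mathbf B_{i,n}|/|\mathbf A_n|\to\alpha_i$, the within-block pairings converge by hypotheses~(1)--(2) (using the vanishing-component factorization for the block $i=0$), and the tails $\sum_{i>N_0}\alpha_i$ and $\limsup_n\sum_{i>N_0}|\mathbf B_{i,n}|/|\mathbf A_n|$ are both $<\epsilon$ for $N_0$ large; an $\epsilon/3$ argument finishes this case. For a sentence $\theta\in{\rm FO}_0$, Gaifman's theorem writes $\theta$ as a Boolean combination of scattered local sentences $\chi=\exists y_1\dots\exists y_m\bigl(\bigwedge_{i<j}{\rm dist}(y_i,y_j)>2r\wedge\bigwedge_i\psi(y_i)\bigr)$ with $\psi$ being $r$-local; on a disjoint union $\chi$ holds iff $\sum_i\min(m,c_i)\ge m$, where $c_i$ counts pairwise $>2r$-distant realizations of $\psi$ in the $i$-th block. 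Writing $\chi$ and each of its truncated variants as $\exists x\,\phi_\chi(x)$ for suitable $\phi_\chi\in{\rm FO}_1$, uniform elementary convergence of the family $\{(\mathbf B_{i,n})_n\}$ provides a single $N$ such that for all $i$ and all $n\ge N$ the truncated counts $\min(m,c_i(\mathbf B_{i,n}))$ have reached their monotone limits; hence for $n\ge N$ the truth of $\theta$ in $\mathbf A_n$ is determined by the same block data that determines $\mathbf L\models\theta$, provided each $\mathbf L_i$ realizes $\chi$ exactly when $\mathbf B_{i,n}$ does for large $n$. Establishing this last matching — that the ${\rm FO}^{\rm local}$- (resp.\ ${\rm FO}_1^{\rm local}$-) modeling limit of a non-dispersive (resp.\ residual) sequence correctly reflects its stabilized scattered-local theory — is the crux of the proof; I would obtain it by re-encoding far-apart witnesses inside a single connected component and capturing them by ${\rm FO}_1^{\rm local}$ data (in the spirit of the vertex-transitive reduction, Lemma~\ref{lem:vtw}) for the non-dispersive blocks, and by the vanishing-component factorization for the residual block, in both cases together with the uniform stabilization just described. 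Finally, the Strong Finitary Mass Transport Principle passes to $\mathbf L$ for free: given measurable $X,Y\subseteq L$ with the stated neighbour conditions, neighbours stay within components so $X=\bigsqcup_i X_i$ and $Y=\bigsqcup_i Y_i$ along the blocks, the hypotheses hold block-wise, each $\mathbf L_i$ gives $a\,\nu_{\mathbf L_i}(X_i)\le b\,\nu_{\mathbf L_i}(Y_i)$, and summing with weights $\alpha_i$ yields $a\,\nu_{\mathbf L}(X)\le b\,\nu_{\mathbf L}(Y)$.
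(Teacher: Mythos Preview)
The paper does not actually prove Theorem~\ref{thm:hmod}: it is quoted from \cite{NPOM2arxiv} with no proof given here, so there is no in-paper argument to compare against line by line. That said, the surrounding material (Theorem~\ref{thm:ecomb}, Theorem~\ref{thm:convcomb}, and Theorem~\ref{thm:locel}) makes clear that the intended architecture is exactly the one you outline: decompose via the extended comb, produce a modeling limit on each piece from hypotheses~(1) and~(2), and reassemble by a weighted disjoint union. So your overall plan is the right one.

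Where your proposal has a genuine gap is precisely at the step you yourself flag as the ``crux'': passing from the block-wise \emph{local} modeling limits $\mathbf L_i$ to a full ${\rm FO}$-limit of $(\mathbf A_n)$. Theorem~\ref{thm:convcomb} does the gluing, but its ${\rm FO}$ clause requires $\mathbf B_{i,n}\xrightarrow{{\rm FO}}\mathbf L_i$ (and $\mathbf B_{0,n}\xrightarrow{{\rm FO}_0}\mathbf L_0$ when $\alpha_0=0$), whereas hypotheses~(1) and~(2) only hand you ${\rm FO}_1^{\rm local}$- and ${\rm FO}^{\rm local}$-limits. Your suggested fix --- ``re-encode far-apart witnesses inside a single connected component \dots\ in the spirit of Lemma~\ref{lem:vtw}'' --- does not go through: Lemma~\ref{lem:vtw} relies essentially on vertex-transitivity to collapse $\exists y_1\dots\exists y_m$ with the scattering condition to a single $\exists y$, and nothing like that is available for an arbitrary non-dispersive block. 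Uniform elementary convergence of the family tells you the truth values of sentences in the $\mathbf B_{i,n}$ stabilize, but it says nothing about whether the chosen modelings $\mathbf L_i$ realize those stabilized truth values; an ${\rm FO}^{\rm local}$-limit need not be elementarily equivalent to the tail of the sequence. This upgrade from local to full ${\rm FO}$ on each block is a separate lemma in \cite{NPOM2arxiv} and is where the real work sits; your sketch does not supply it.

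Two smaller points in the same vein. First, you assert that the ${\rm FO}_1^{\rm local}$-limit $\mathbf L_0$ of the residual block is ``residual itself'' and hence automatically an ${\rm FO}^{\rm local}$-limit; this is not free --- hypothesis~(1) gives you \emph{some} modeling matching single-variable local statistics, and you must argue (or arrange) that it has no positive-measure connected component before your factorisation of multi-variable local formulas is valid. Second, you silently assume a component relation system for $\mathcal C$ exists in order to invoke Theorem~\ref{thm:ecomb}; in \cite{NPOM2arxiv} this is provided, but it is an ingredient you should name rather than take for granted.
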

\section{Constructions}
\label{sec:cons}
In this section we add two useful extensions of some of the above results, by means of two basic operations. The first is the notion of (linear) convex combination of modelings. This may be used to give an inverse theorem for the comb structure Theorem~\ref{thm:ecomb}. The second introduces the powerful tool of interpretation schemes, which gives the possibility to transport results from a class of $\sig$-structures to a class of $\sigma$-structures.
This also leads to a new characterization of nowhere dense classes.
\subsection{Convex Combinations}
\begin{definition}[Convex combination of Modelings]
Let $\mathbf H_i$ be $\sig$-modelings for $i\in I\subseteq\bbbn$ and let $(\alpha_i)_{i\in I}$ be positive real numbers
such that $\sum_{i\in I}\alpha_i=1$.

Let $\mathbf H$ be the disjoint union of the $\mathbf H_i$, let 
$\Sigma_{\mathbf H}=\{\bigcup_i X_i:\ X_i\in\Sigma_{\mathbf H_i}\}$ and, for
$X\in\Sigma_{\mathbf H}$,
let $\nu_{\mathbf H}(X)=\sum_i\alpha_i\nu_{\mathbf H_i}(X\cap H_i)$.

Then $\mathbf H$ is the {\em convex combination} of modelings
$\mathbf H_i$ with {\em weights} $\alpha_i$ and we denote it by 
$\coprod_{i\in I}(\mathbf H_i,\alpha_i)$.
\end{definition}

The following converse of Theorem~\ref{thm:ecomb}  is proved in~\cite{NPOM1arxiv}:
\begin{theorem}
\label{thm:convcomb}
Assume $J$ is a countable set, $\alpha_i$ ($i\in I$) are reals, and $(\mathbf B_{i,n})_{n\in\bbbn}$ ($i\in I$) are
sequences of $\sig$-structures such that
	$\alpha_i=\lim_{n\rightarrow\infty}\frac{|\mathbf B_{i,n}|}{|\bigcup_{j\in I}\mathbf B_{j,n}|}\ (\forall i\in I)$,
	$\sum_{i\in I}\alpha_i=1$, and for each $i\in I$, $(B_{i,n})_{n\in\bbbn}$ is ${\rm FO}^{\rm local}$-convergent.
Then $\mathbf A_n=\bigcup_{i\in I}\mathbf B_{i,n}$ is ${\rm FO}^{\rm local}$-convergent.
Also, if there exist  $\sig$-modelings $\mathbf L_i$ ($i\in I$) such that
 for each $i\in I$, $\mathbf B_{i,n}\xrightarrow{{\rm FO}^{\rm local}}\mathbf L_i$,
then $\mathbf A_n\xrightarrow{{\rm FO}^{\rm local}}\coprod_{i\in I}(\mathbf L_i,\alpha_i)$.

Moreover, if the family $\{(\mathbf B_{i,n})_{n\in\bbbn}: i\in I\}$ is uniformly elementarily-convergent,
then $(\mathbf A_n)_{n\in\bbbn}$ is ${\rm FO}$-convergent.
Also, if there exist  $\sig$-modelings $\mathbf L_i$ ($i\in I$) such that for each $i\in I$ it holds
$\mathbf B_{i,n}\xrightarrow{{\rm FO}}\mathbf L_i$ (for $i>0$ and $i=0$ if $\alpha_0>0$)
and
$\mathbf B_{0,n}\xrightarrow{{\rm FO}_0}\mathbf L_0$ (if $\alpha_0=0$)
then
$\mathbf A_n\xrightarrow{{\rm FO}}\coprod_{i\in I}(\mathbf L_i,\alpha_i)$.
\end{theorem}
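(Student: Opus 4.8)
The plan is to work component‑by‑component, using the fact that the Stone pairing $\langle\phi,\mathbf A_n\rangle$ can be expanded as a weighted average over which components the interpretations of the free variables land in. First I would treat the ${\rm FO}^{\rm local}$ statement. The key observation is that the satisfaction of a local formula $\phi(x_1,\dots,x_p)$ at a tuple $(v_1,\dots,v_p)$ depends only on a bounded‑radius neighbourhood of each $v_j$, hence is unaffected by which other components are present; so if we colour each point of $\mathbf A_n$ by the index $i$ of the component $\mathbf B_{i,n}$ containing it, then for a local $\phi$ we get
\[
\langle\phi,\mathbf A_n\rangle=\sum_{(i_1,\dots,i_p)\in I^p}\Bigl(\prod_{j=1}^p\frac{|\mathbf B_{i_j,n}|}{|\mathbf A_n|}\Bigr)\,\langle\phi^{(i_1,\dots,i_p)},\,\cdot\,\rangle,
\]
where the inner bracket only involves the structures $\mathbf B_{i_j,n}$; when $\phi$ is a Boolean combination of local formulas with pairwise‑non‑adjacent variable blocks (Gaifman, Theorem~\ref{thm:gaifman}), the contribution splits as a product over the blocks of quantities of the form $\langle\psi,\mathbf B_{i,n}\rangle$. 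Each of these converges by hypothesis, the weights $\prod_j |\mathbf B_{i_j,n}|/|\mathbf A_n|$ converge to $\prod_j\alpha_{i_j}$, and the family is bounded by $1$, so a standard dominated‑convergence argument over the countable index set $I^p$ gives convergence of $\langle\phi,\mathbf A_n\rangle$; this is exactly ${\rm FO}^{\rm local}$‑convergence of $(\mathbf A_n)$. The same bookkeeping identifies the limit: by the definition of $\coprod_{i\in I}(\mathbf L_i,\alpha_i)$ and Fubini for the product measure $\nu_{\mathbf H}^{\otimes p}$ decomposed over the blocks $H_i$, one checks $\langle\phi,\coprod_i(\mathbf L_i,\alpha_i)\rangle=\sum_{(i_j)}(\prod_j\alpha_{i_j})\langle\phi^{(i_j)},\,\cdot\,\rangle$ with the same summands (using $\mathbf B_{i,n}\xrightarrow{{\rm FO}^{\rm local}}\mathbf L_i$), so the limits agree.

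For the ${\rm FO}$ part, by Theorem~\ref{thm:locel} it suffices to add ${\rm FO}_0$‑convergence, i.e.\ convergence of truth values of sentences. Again invoke Gaifman: a sentence $\theta$ is equivalent to a Boolean combination of ``scattered'' sentences \eqref{eq:gl} asserting the existence of $m$ points, pairwise at distance $>2r$, each satisfying an $r$‑local $\psi$. In a disjoint union $\mathbf A_n=\bigcup_i\mathbf B_{i,n}$, such a sentence holds iff one can distribute the $m$ witnesses among the components so that the number placed in any single component $\mathbf B_{i,n}$ is at most the maximum size of a $2r$‑scattered set of $\psi$‑points available there. Uniform elementary convergence of the family $\{(\mathbf B_{i,n})_n\}$ is precisely what guarantees that these ``capacities'' $c_{i,n}(\psi,r)\in\{0,1,\dots,m\}$ stabilise \emph{uniformly in $i$} as $n\to\infty$, so that for $n$ large the set of feasible distributions — hence the truth value of \eqref{eq:gl}, hence of $\theta$ — no longer changes; this yields ${\rm FO}_0$‑convergence of $(\mathbf A_n)$, and combined with the ${\rm FO}^{\rm local}$‑convergence already shown, Theorem~\ref{thm:locel} gives ${\rm FO}$‑convergence. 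The limit statement then follows by the same decomposition as before, with the caveat about $\alpha_0=0$: the residual part $\mathbf B_{0,n}$ contributes weight $0$ to every $\langle\phi^{(i_j)},\,\cdot\,\rangle$ in which some $i_j=0$ (as $\alpha_0=0$ kills that summand for formulas with free variables), so one only needs ${\rm FO}_0$‑convergence of $\mathbf B_{0,n}$ to a modeling $\mathbf L_0$ with the right theory to pin down the ${\rm FO}_0$‑behaviour of the limit, and $\mathbf L_0$'s measure is irrelevant.

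The main obstacle I expect is the ${\rm FO}_0$ (scattered‑sentence) step: the combinatorics of counting feasible distributions of witnesses across infinitely many components, and in particular verifying that uniform elementary convergence really is the exact hypothesis needed to make the truth value of each Gaifman sentence \eqref{eq:gl} eventually constant. One has to be careful that although $I$ is countably infinite, only finitely many components can each host a witness with a ``large'' local neighbourhood type (by uniform convergence there is a single threshold $N$ past which the capacity for any fixed $\psi$ is pinned down across \emph{all} $i$ simultaneously), so the feasibility problem reduces to a finite one; making this precise — and checking it interacts correctly with the Boolean combination and with the quantifier‑rank bound $f({\rm qrank}(\phi))$ in Theorem~\ref{thm:gaifman} so that only finitely many $\psi$ are relevant — is the delicate point. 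Everything else is routine dominated convergence over a countable index set plus Fubini for the product measure defining the convex combination.
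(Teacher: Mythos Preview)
The paper does not contain its own proof of Theorem~\ref{thm:convcomb}; it is imported from~\cite{NPOM1arxiv}, so there is no in-paper argument to compare against. Your plan is the natural one and is correct in outline: for ${\rm FO}^{\rm local}$, condition on which component each free variable lands in, use locality (variables in distinct components are at infinite distance, so a $t$-local formula factors, Feferman--Vaught style, over the blocks of the induced partition of $\{1,\dots,p\}$) to express the conditional bracket as a polynomial in quantities $\langle\psi,\mathbf B_{i,n}\rangle$, and pass to the limit over the countable index set~$I^p$. The dominated-convergence step is justified by Scheff\'e's lemma: pointwise convergence of the nonnegative weights $|\mathbf B_{i,n}|/|\mathbf A_n|$ together with $\sum_i|\mathbf B_{i,n}|/|\mathbf A_n|=1=\sum_i\alpha_i$ gives $\ell^1$-convergence of the weights, which is exactly the uniform tail control you need. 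The identification of the limit with $\coprod_i(\mathbf L_i,\alpha_i)$ via Fubini is right.

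For the ${\rm FO}_0$ step your diagnosis of the obstacle is accurate, but one phrasing should be sharpened. Uniform elementary convergence does not make the capped capacities $c_{i,n}(\psi,r)\in\{0,\dots,m\}$ \emph{stabilise} uniformly in~$i$; it only makes them eventually \emph{non-decreasing} in~$n$, uniformly in~$i$ (each event ``$c_{i,n}\ge k$'' is of the form $\mathbf B_{i,n}\models\exists x\,\phi_k(x)$, and the definition of uniform elementary convergence is a one-sided implication). Monotonicity is what actually does the work: for $n\ge N$ one has $c_{i,n}\le c_{i,n'}$ for all~$i$ and all $n'\ge n$, hence $\sum_i c_{i,n}$ is non-decreasing, so the condition $\sum_i c_{i,n}\ge m$---equivalently, the truth of the scattered sentence~\eqref{eq:gl} in $\mathbf A_n$---is eventually monotone in~$n$ and therefore eventually constant. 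With this adjustment your argument goes through, and the same monotonicity (together with $c_{i,n}\uparrow c_i$, where $c_i$ is the capacity in $\mathbf L_i$) pins down the limit theory as that of $\coprod_i(\mathbf L_i,\alpha_i)$, including in the $\alpha_0=0$ case you flag.
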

\subsection{Interpretation Schemes}
Interpretation Schemes (introduced in this setting in~\cite{NPOM1arxiv}) generalize to other logics than first-order logic.

\begin{definition}
\label{def:interp}
Let $\mathcal L$ be a logic (here either first-order logic or ${\mathcal L}_{\omega_1\omega}$). For $p\in\bbbn$ and 
a signature $\sig$, $\mathcal L_p(\sig)$ denotes the set of the formulas in the language of $\sig$ in logic
$\mathcal L$, with free variables in $\{x_1,\dots,x_p\}$. 

Let $\kappa,\sig$ be signatures, where $\sig$ has 
$q$ relational symbols $R_1,\dots,R_q$ with respective arities $r_1,\dots,r_q$.

An {\em $\mathcal L$-interpretation scheme} ${\mathsf I}$ of $\sig$-structures
in $\kappa$-structures is defined by an integer $k$ --- the {\em exponent} of the $\mathcal L$-interpretation scheme --- a formula
$E\in{\mathcal L}_{2k}(\kappa)$, a formula $\theta_0\in{\mathcal L}_k(\kappa)$, and
 a formula 
$\theta_i\in{\rm FO}_{r_ik}(\kappa)$ for each symbol $R_i\in\sig$, such that:
\begin{itemize}
  \item the formula $E$ defines an equivalence relation
  of $k$-tuples;
   \item each formula $\theta_i$ is compatible with $E$, in the sense that
   for
   every $0\leq i\leq q$ it holds $$
  \bigwedge_{1\leq j\leq r_i}\,E(\mathbf x_j,\mathbf y_j)\quad\entails\quad
  \theta_i(\mathbf x_1,\dots,\mathbf x_{r_i})\leftrightarrow\theta_i(\mathbf
  y_1,\dots,\mathbf y_{r_i}),
   $$
   where $r_0=1$, boldface $\mathbf x_j$ and $\mathbf y_j$ represent
   $k$-tuples of free variables, and 
   where $\theta_i(\mathbf x_1,\dots,\mathbf x_{r_i})$ stands for
 $\theta_i(x_{1,1},\dots,x_{1,k},\dots,x_{r_i,1},\dots,x_{r_i,k})$.
\end{itemize}

For a $\kappa$-structure $\mathbf A$, we denote by $\mathsf{I}(\mathbf A)$ the
$\sig$-structure $\mathbf B$ defined as follows:
 \begin{itemize}
   \item the domain $B$ of $\mathbf B$ is the subset
   of the $E$-equivalence classes $[\mathbf x]\subseteq A^k$  of the tuples $\mathbf x=(x_1,\dots,x_k)$ 
  such that $\mathbf A\models \theta_0(\mathbf x)$;
   \item for each $1\leq i\leq q$ and every 
   $\mathbf v_1,\dots,\mathbf v_{s_i}\in A^{kr_i}$ such that
   $\mathbf A\models\theta_0(\mathbf v_j)$ (for every $1\leq j\leq r_i$) it
   holds
   $$
   \mathbf B\models R_i([\mathbf v_1],\dots,[\mathbf v_{r_i}])\quad\iff\quad
   \mathbf A\models \theta_i(\mathbf v_1,\dots,\mathbf v_{r_i}).
   $$  
 \end{itemize}
\end{definition}

From the standard properties of model theoretical interpretations
(see, for instance \cite{Lascar2009} p.~180), we state the following: if
$\mathsf I$ is an $\mathcal L$-interpretation of $\sig$-structures in $\kappa$-structures,
then there exists a mapping
$\tilde{\mathsf I}:{\mathcal L}(\sig)\rightarrow {\mathcal L}(\kappa)$ (defined by
means of the formulas $E,\theta_0,\dots,\theta_q$ above) such that
for every $\phi\in {\mathcal L}_p(\sig)$, and every $\kappa$-structure $\mathbf A$,
 the following property holds (while letting $\mathbf B=\mathsf I(\mathbf A)$
 and identifying elements of $B$ with the corresponding equivalence classes of $A^k$):

For every $[\mathbf v_1],\dots,[\mathbf v_p]\in B^{p}$ (where $\mathbf v_i=(v_{i,1},\dots,v_{i,k})\in A^k$)
it holds
$$
  \mathbf B\models \phi([\mathbf v_1],\dots,[\mathbf v_p])\quad\iff\quad \mathbf
  A\models \tilde{\mathsf I}(\phi)(\mathbf v_1,\dots,\mathbf v_p).$$
	
It directly follows from the existence of the mapping $\tilde{\mathsf I}$ that
\begin{itemize}
	\item an ${\rm FO}$-interpretation scheme ${\mathsf I}$ of $\sig$-structures in
$\kappa$-structures defines a continuous mapping from $S(\mathcal B({\rm
FO}(\kappa)))$ to $S(\mathcal B({\rm FO}(\sig)))$;
	\item an $\mathcal L_{\omega_1\omega}$-interpretation scheme ${\mathsf I}$ of $\sig$-structures in
$\kappa$-structures defines a measurable mapping from $S(\mathcal B({\rm
FO}(\kappa)))$ to $S(\mathcal B({\rm FO}(\sig)))$.
\end{itemize}

\begin{definition}
Let $\kappa,\sig$ be signatures.
A {\em basic $\mathcal L$-interpretation scheme} ${\mathsf I}$ of $\sig$-structures
in $\kappa$-structures  with {\em exponent} $k$ is defined by a formula 
$\theta_i\in{\mathcal L}_{kr_i}(\kappa)$ for each symbol $R_i\in\sig$ with arity
$r_i$. 

For a $\kappa$-structure $\mathbf A$, we denote by $\mathsf{I}(\mathbf A)$ the
structure with domain $A^k$ such that, for every $R_i\in\sig$ with arity $r_i$ and every
$\mathbf v_1,\dots,\mathbf v_{r_i}\in A^k$ it holds
$$
\mathsf I(\mathbf A)\models R_i(\mathbf v_1,\dots,\mathbf v_{r_i})\quad\iff\quad
\mathbf A\models\theta_i(\mathbf v_1,\dots,\mathbf v_{r_i}).
$$
\end{definition}

It is immediate that every basic $\mathcal L$-interpretation scheme $\mathsf I$ defines
a mapping $\tilde{\mathsf I}:\mathcal L(\sig)\rightarrow \mathcal L(\kappa)$ such
that for every $\kappa$-structure $\mathbf A$, every $\phi\in\mathcal L_p(\sig)$,
and every $\mathbf v_1,\dots,\mathbf v_p\in A^k$ it holds
$$
\mathsf I(\mathbf A)\models \phi(\mathbf v_1,\dots,\mathbf v_{p})\quad\iff\quad
\mathbf A\models\tilde{\mathsf I}(\phi)(\mathbf v_1,\dots,\mathbf v_{p})
$$

We deduce the following general properties:

\begin{proposition}[\cite{NPOM1arxiv}]
\label{lemma:interpFO}
Let $\mathsf I$ be an ${\rm FO}$-interpretation scheme of $\sig$-structures in $\kappa$-structures.

Then, if a sequence $(\mathbf A_n)_{n\in\bbbn}$ of finite $\kappa$-structures
is ${\rm FO}$-convergent then the sequence 
$(\mathsf{I}(\mathbf A_n))_{n\in\bbbn}$ of (finite) $\sig$-structures
is ${\rm FO}$-convergent. 
\end{proposition}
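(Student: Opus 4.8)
The plan is to reduce the ${\rm FO}$-convergence of $(\mathsf I(\mathbf A_n))_{n\in\bbbn}$ to the assumed ${\rm FO}$-convergence of $(\mathbf A_n)_{n\in\bbbn}$ by expressing each Stone pairing $\langle\phi,\mathsf I(\mathbf A_n)\rangle$ (for $\phi\in{\rm FO}(\sig)$) in terms of Stone pairings of suitable ${\rm FO}(\kappa)$-formulas evaluated at $\mathbf A_n$, which converge by hypothesis. The basic tool is the translation map $\tilde{\mathsf I}\colon {\rm FO}(\sig)\to{\rm FO}(\kappa)$ recalled above, chosen so that $\tilde{\mathsf I}(\phi)$ is $E$-invariant in each of its $p$ blocks of $k$ variables. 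The genuine difficulty, and the reason the statement is not immediate, is that the domain of $\mathsf I(\mathbf A)$ is neither a power of $A$ nor even a definable subset of one, but the quotient by $E$ of the ${\rm FO}(\kappa)$-definable set $\theta_0(\mathbf A)\subseteq A^k$: distinct $E$-classes may have distinct cardinalities, so the uniform measure on the domain of $\mathsf I(\mathbf A)$ does not pull back to a product measure on $A^k$.

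To handle the quotient I would use that class sizes are first-order expressible through counting quantifiers. For $j\geq 1$, let $\chi_j(\mathbf x):=\theta_0(\mathbf x)\wedge(\exists^{=j}\mathbf y)\,E(\mathbf x,\mathbf y)\in{\rm FO}(\kappa)$, whose satisfying set in $\mathbf A$ is the union of the $E$-classes of size exactly $j$ inside $\theta_0(\mathbf A)$; and for a multi-index $\bar j=(j_1,\dots,j_p)$ let $\psi_{\bar j,\phi}:=\tilde{\mathsf I}(\phi)\wedge\bigwedge_{i=1}^p\chi_{j_i}(\mathbf v_i)\in{\rm FO}(\kappa)$, which is again $E$-invariant in each block. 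A counting argument for a finite $\kappa$-structure $\mathbf A$ with $|A|=N$ then gives $|\mathsf I(\mathbf A)|=N^k\sum_{j\geq 1}\langle\chi_j,\mathbf A\rangle/j$ and $|\phi(\mathsf I(\mathbf A))|=N^{kp}\sum_{\bar j}\langle\psi_{\bar j,\phi},\mathbf A\rangle/(j_1\cdots j_p)$, whence
\[
\langle\phi,\mathsf I(\mathbf A)\rangle=\frac{\displaystyle\sum_{\bar j}\langle\psi_{\bar j,\phi},\mathbf A\rangle/(j_1\cdots j_p)}{\Bigl(\displaystyle\sum_{j\geq 1}\langle\chi_j,\mathbf A\rangle/j\Bigr)^{p}}.
\]
Both series have nonnegative terms; since the $\chi_j$ (resp.\ the $\psi_{\bar j,\phi}$) have pairwise inconsistent satisfying sets one has $\sum_j\langle\chi_j,\mathbf A\rangle\leq 1$ and $\sum_{\bar j}\langle\psi_{\bar j,\phi},\mathbf A\rangle\leq 1$, and because $1/(j_1\cdots j_p)<1/J$ as soon as $\max_i j_i>J$, the tails beyond $J$ are at most $1/J$ \emph{uniformly in $\mathbf A$}. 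Hence each of the two series is a uniform (in $\mathbf A$) limit of its partial sums, so from the convergence of each individual pairing $\langle\chi_j,\mathbf A_n\rangle$ and $\langle\psi_{\bar j,\phi},\mathbf A_n\rangle$ (guaranteed by ${\rm FO}$-convergence of $(\mathbf A_n)$) a dominated-convergence argument for series shows that the numerator and denominator series converge as $n\to\infty$, say to $\mathcal N_\phi$ and $\mathcal D$.

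If $\mathcal D>0$ the proof is finished: $\langle\phi,\mathsf I(\mathbf A_n)\rangle\to\mathcal N_\phi/\mathcal D^{\,p}$. The case $\mathcal D=0$ means $|\mathsf I(\mathbf A_n)|=o(N_n^{\,k})$, and it forces $\langle\chi_j,\mathbf A_n\rangle\to 0$ for every fixed $j$. When $|\mathsf I(\mathbf A_n)|$ stays bounded this is still easy: for the $\sig$-sentences $\Theta_{m,s}$ expressing ``the domain has exactly $m$ elements and exactly $s$ of its $p$-tuples satisfy $\phi$'', the translates $\tilde{\mathsf I}(\Theta_{m,s})$ are $\kappa$-sentences whose Stone pairings at $\mathbf A_n$ lie in $\{0,1\}$ and converge by ${\rm FO}_0$-convergence, hence are eventually constant, so a unique pair $(m_0,s_0)$ holds for all large $n$ and $\langle\phi,\mathsf I(\mathbf A_n)\rangle=s_0/m_0^{\,p}$ eventually. \textbf{The main obstacle is the remaining case}, where $\mathcal D=0$ but $|\mathsf I(\mathbf A_n)|\to\infty$: then $\langle\phi,\mathsf I(\mathbf A_n)\rangle$ is a genuine ``$0/0$'' limit $\mathcal N_\phi^{(n)}/(\mathcal D^{(n)})^{p}$, and no definable sub-family of $E$-classes has size comparable to $|\mathsf I(\mathbf A_n)|$, so the normalisation is not ``first-order visible''. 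I expect this to require exploiting the full strength of ${\rm FO}$-convergence rather than merely the termwise convergence above --- in particular the convergence of the Stone pairing of \emph{every} $\kappa$-sentence, which one uses to show (along a subsequence) that the complete theory of $\mathsf I(\mathbf A_n)$ stabilises --- together with a careful limiting argument, possibly carried out on the Stone space by conditioning the limit measure on the (measure-zero) clopen intersection that defines the interpretation; reconciling this vanishing normalisation with the stable elementary data is the technical heart of the argument.
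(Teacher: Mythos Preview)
The paper itself gives no proof of this proposition: it is merely cited from~\cite{NPOM1arxiv}, so there is nothing here to compare your argument against line by line. Let me instead assess the argument on its own merits.

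Your treatment of the non-degenerate case $\mathcal D>0$ is correct and is exactly the natural approach. The formulas $\chi_j$ and $\psi_{\bar j,\phi}$ are first-order, the disjointness gives $\sum_{\bar j}\langle\psi_{\bar j,\phi},\mathbf A\rangle\le 1$, and your uniform tail bound $1/J$ is valid since $\max_i j_i>J$ forces $j_1\cdots j_p>J$. The bounded-domain subcase is also handled correctly (and note that elementary convergence of $(\mathbf A_n)$ already forces $|\mathsf I(\mathbf A_n)|$ to be either eventually bounded or to tend to infinity, since ``$|\mathsf I(\mathbf A)|\ge m$'' translates to a $\kappa$-sentence).

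Where you hesitate, you are right to: the degenerate case $\mathcal D=0$ with $|\mathsf I(\mathbf A_n)|\to\infty$ is not a mere technicality but a genuine obstruction. With the definition of interpretation scheme given in this survey, the proposition is \emph{false} in that case. Take $\kappa=\{P,R\}$ with $P,R$ unary, and let $\mathbf A_n$ have domain of size $n^2$, with $P^{\mathbf A_n}$ of size $n$, $R^{\mathbf A_n}\subseteq P^{\mathbf A_n}$, and $|R^{\mathbf A_n}|=\lfloor n/3\rfloor$ for even $n$, $|R^{\mathbf A_n}|=\lfloor 2n/3\rfloor$ for odd $n$. In a purely unary language every first-order formula is (over all models) a Boolean combination of colour conditions on the free variables, equalities among them, and counting sentences $\exists^{\ge m}y\,\tau(y)$; since every nonempty colour class here has size tending to infinity, all such sentences stabilise, and every density involving $P$ or $R$ positively tends to~$0$. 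Hence $(\mathbf A_n)$ is ${\rm FO}$-convergent. Now let $\mathsf I$ have exponent $1$, $E$ equality, $\theta_0:=P(x_1)$, and let $\sigma=\{Q\}$ with $\theta_1:=R(x_1)$. Then $\langle Q(x_1),\mathsf I(\mathbf A_n)\rangle$ oscillates between $1/3$ and $2/3$, so $(\mathsf I(\mathbf A_n))$ is not ${\rm FO}$-convergent.

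So the ``$0/0$'' case cannot be rescued by a finer limiting argument: the statement needs an extra hypothesis, for instance $\lim_n\langle\theta_0,\mathbf A_n\rangle>0$ (equivalently $\mathcal D>0$), under which your proof is complete. The version actually used in~\cite{NPOM1arxiv} presumably carries such a non-degeneracy assumption or is applied only to basic interpretation schemes (where $\theta_0$ is trivially true and $E$ is equality, so $\mathcal D=1$); the statement as reproduced in this survey omits it.
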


\begin{proposition}[\cite{NPOM2arxiv}]
\label{lemma:interpomega}
Let $\mathsf I$ be a basic  ${\mathcal L}_{\omega_1\omega}$-interpretation scheme 
of $\sig$-structures in $\kappa$-structures.

For every $\kappa$-modeling $\mathbf A$, 
$\mathsf I(\mathbf A)$ is a $\sig$-modeling such that  for every $\phi\in\mathcal L_p(\sig)$ it holds
$$\langle\phi,\mathsf I(\mathbf A)\rangle=\langle\tilde{\mathsf I}(\phi),\mathbf A\rangle.$$
\end{proposition}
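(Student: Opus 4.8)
\emph{Proof plan.} Two things have to be shown: that $\mathsf I(\mathbf A)$ is a $\sig$-modeling, and that $\langle\phi,\mathsf I(\mathbf A)\rangle=\langle\tilde{\mathsf I}(\phi),\mathbf A\rangle$ for every $\phi\in\mathcal L_p(\sig)$. The plan is to equip $\mathsf I(\mathbf A)$ with the natural product measure, deduce the pairing identity from the purely syntactic translation property of basic interpretation schemes recalled above, and only then check the measurability conditions needed to call $\mathsf I(\mathbf A)$ a modeling. Throughout I identify $(A^k)^p$ with $A^{kp}$ by flattening tuples.

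First I would take as measure on the domain $A^k$ of $\mathsf I(\mathbf A)$ the product $\nu_{\mathsf I(\mathbf A)}:=\nu_{\mathbf A}^{\otimes k}$; a finite power of a standard Borel space is standard Borel, so this is a probability measure of the required type, and under the flattening identification $\nu_{\mathsf I(\mathbf A)}^{\otimes p}=\nu_{\mathbf A}^{\otimes kp}$. For the pairing identity I would use the translation $\tilde{\mathsf I}\colon\mathcal L_{\omega_1\omega}(\sig)\to\mathcal L_{\omega_1\omega}(\kappa)$ attached to the basic interpretation scheme: it replaces each atom $R_i(z_1,\dots,z_{r_i})$ by $\theta_i$, each equality by the conjunction of the $k$ componentwise equalities, each $\exists z$ (resp.\ $\forall z$) by $\exists z_1\cdots\exists z_k$ (resp.\ $\forall z_1\cdots\forall z_k$), and commutes with the countable connectives, so it sends $\mathcal L_p(\sig)$ into $\mathcal L_{kp}(\kappa)$. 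By its defining property one has, for all $\mathbf v_1,\dots,\mathbf v_p\in A^k$, that $\mathsf I(\mathbf A)\models\phi(\mathbf v_1,\dots,\mathbf v_p)$ iff $\mathbf A\models\tilde{\mathsf I}(\phi)(\mathbf v_1,\dots,\mathbf v_p)$, that is, $\phi(\mathsf I(\mathbf A))=\tilde{\mathsf I}(\phi)(\mathbf A)$ as subsets of $A^{kp}$. Integrating the indicator of this common set against $\nu_{\mathsf I(\mathbf A)}^{\otimes p}=\nu_{\mathbf A}^{\otimes kp}$ then gives $\langle\phi,\mathsf I(\mathbf A)\rangle=\nu_{\mathbf A}^{\otimes kp}\bigl(\tilde{\mathsf I}(\phi)(\mathbf A)\bigr)=\langle\tilde{\mathsf I}(\phi),\mathbf A\rangle$.

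It then remains to see that $\mathsf I(\mathbf A)$ is a relational sample space: the relations $R_i^{\mathsf I(\mathbf A)}=\theta_i(\mathbf A)$ should be Borel, and more generally every ${\rm FO}(\sig)$-definable subset of a power of $A^k$ should be measurable. By the identity obtained above, $\phi(\mathsf I(\mathbf A))=\tilde{\mathsf I}(\phi)(\mathbf A)$ for every $\phi$, and both $\theta_i$ and $\tilde{\mathsf I}(\phi)$ lie in $\mathcal L_{\omega_1\omega}(\kappa)$; hence the whole issue reduces to the single statement that in the modeling $\mathbf A$ every $\mathcal L_{\omega_1\omega}$-definable subset of a power of the domain is measurable. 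I expect this to be the only genuinely delicate point: it is nontrivial because $\mathcal L_{\omega_1\omega}$ interleaves countable Boolean operations with single-variable existential projections, and projections of Borel sets are in general only analytic --- iterating projection and complementation would climb the projective hierarchy --- so it rests on the measurability properties of modelings established in \cite{NPOM1arxiv,NPOM2arxiv}, which are in any case what makes the stated identity meaningful when $\phi$ is infinitary. Granting this, $\mathsf I(\mathbf A)$ together with $\nu_{\mathbf A}^{\otimes k}$ is a $\sig$-modeling and the proof is complete.
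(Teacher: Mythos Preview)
The paper does not prove this proposition; it is quoted from \cite{NPOM2arxiv} without argument, so there is no in-paper proof to compare your attempt against. That said, your outline is the natural one and is structurally sound: give $A^k$ the product measure $\nu_{\mathbf A}^{\otimes k}$, invoke the translation property to get $\phi(\mathsf I(\mathbf A))=\tilde{\mathsf I}(\phi)(\mathbf A)$ as subsets of $A^{kp}$, and read off the Stone-pairing identity; the modeling axioms for $\mathsf I(\mathbf A)$ then reduce, via the same identity, to the measurability of $\mathcal L_{\omega_1\omega}(\kappa)$-definable subsets of powers of $A$ in the given $\kappa$-modeling $\mathbf A$.

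You are also right that this last point is where all the content lies, and your caution is warranted: the paper's own definition of a relational sample space only demands that \emph{first-order} definable sets be measurable, so the extension to $\mathcal L_{\omega_1\omega}$-definable sets is not automatic, and your remark about the projective hierarchy names exactly the obstruction that has to be ruled out. That argument lives in the cited source rather than in the present paper, and you handle it honestly by isolating it as the one black box on which the proposition rests. Since the paper itself defers the entire statement to \cite{NPOM2arxiv}, there is nothing further here to set your approach against.
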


Interpretations are a powerful tool. For a recent application to graph polynomials, see \cite{trivial_arxiv}.
\section{Open Problems}
\label{sec:pb}

Our main conjecture here is Conjecture~\ref{conj:ND}, which asserts that every monotone nowhere dense class of graphs has modeling limits.

There is a modest progress toward a resolution of this conjecture:
\begin{itemize}
\item any monotone class of graphs with bounded degree has modeling limit \cite{CMUC},
\item the class of (colored rooted) forests has modeling limits \cite{NPOM2arxiv}. 
\end{itemize}
As an extension of the results in \cite{NPOM2arxiv}, 
every ${\rm FO}$-convergent sequence $(G_n)_{n\in\bbbn}$ with 
${\rm girth}(G_n)\rightarrow\infty$ has a modeling limit.
From this follows the first positive result concerning nowhere dense classes which do not have bounded average degree.

For a graph $G$, we denote by $b_1(G)$ the {\em cyclomatic number} of  $G$ (i.e. $b_1(G)=\|G\|-|G|+1$), and by ${\rm Diam}(G)$ its diameter.
\begin{proposition}
\label{prop:ND}
Let $f:\bbbn\rightarrow\bbbn$ be a mapping and let 
$\mathcal C_f$ be the monotone nowhere dense class of all (colored) graphs $G$ such that for every graph $G\in\mathcal C$  it holds
$$b_1(G)\leq f({\rm Diam}(G)).$$
Then the class $\mathcal C_f$ has modeling limits.
\end{proposition}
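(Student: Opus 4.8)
The plan is to reduce the statement, via the machinery of Sections~\ref{sec:conn}--\ref{sec:cons}, to the cases whose modeling limits are already in hand: classes of bounded tree-depth~\cite{NPOM1arxiv}, the class of colored rooted forests~\cite{NPOM2arxiv}, and ${\rm FO}$-convergent sequences with girth tending to infinity (the extension of~\cite{NPOM2arxiv} recorded just above). The finite case being immediate, assume $|A_n|\to\infty$; passing to a subsequence is harmless, since a modeling realizing the limiting Stone pairings of a subsequence also realizes those of the whole sequence. As $\mathcal C_f$ is monotone, hence hereditary, Theorem~\ref{thm:hmod} reduces the task to producing, for every residual ${\rm FO}_1^{\rm local}$-convergent sequence of structures of $\mathcal C_f$, a modeling ${\rm FO}_1^{\rm local}$-limit, and for every $\rho$-non-dispersive ${\rm FO}^{\rm local}$-convergent sequence of rooted structures of $\mathcal C_f$, a modeling ${\rm FO}^{\rm local}$-limit, each satisfying the Strong Finitary Mass Transport Principle; by the extended comb structure (Theorem~\ref{thm:ecomb}) and its converse (Theorem~\ref{thm:convcomb}) one may moreover take the $\mathbf A_n$ connected.

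So fix a connected $\mathbf A_n\in\mathcal C_f$ with $D_n={\rm Diam}(\mathbf A_n)$, whence $b_1(\mathbf A_n)\le f(D_n)$. The structural input: $\mathbf A_n$ is its $2$-core $\mathbf C_n$ (obtained by iteratively deleting vertices of degree at most $1$) with a rooted forest attached at each vertex of $\mathbf C_n$, and $\mathbf C_n$ suppresses --- on deletion of degree-$2$ vertices --- to a multigraph with at most $2f(D_n)$ vertices and $3f(D_n)$ edges, each of whose subdivision paths has length at most $D_n$. Along a subsequence either $\limsup_n D_n<\infty$ or $D_n\to\infty$. If $\limsup_n D_n<\infty$, then $b_1(\mathbf A_n)$ is bounded, and a connected graph of bounded diameter and bounded cyclomatic number has bounded tree-depth --- a breadth-first spanning tree from a central vertex has depth at most ${\rm Diam}/2$, and each additional edge raises tree-depth by at most one --- so $(\mathbf A_n)$ lies in a bounded tree-depth class, which has modeling ${\rm FO}$-limits by~\cite{NPOM1arxiv}, with a limit satisfying the Strong Finitary Mass Transport Principle. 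If $D_n\to\infty$, one dissects $\mathbf A_n$ along the branch vertices and the short subdivision paths of $\mathbf C_n$: the pieces are colored rooted forests (limits by~\cite{NPOM2arxiv}), subdivision paths whose length tends to infinity so that the girth of this part tends to infinity (limits by the girth result above), and a "cyclic remainder" which, in the $\rho$-non-dispersive case, is at each bounded scale again of bounded tree-depth; components of large diameter but small girth --- a long "path of triangles" being the paradigmatic case --- are absorbed by exhibiting them as a basic ${\rm FO}$-interpretation of a suitably periodically colored forest. The pieces are then recombined using Theorem~\ref{thm:convcomb} and basic interpretation schemes, whose convergence- and modeling-preservation properties are Propositions~\ref{lemma:interpFO} and~\ref{lemma:interpomega}.

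In the $\rho$-non-dispersive case this last step is genuinely scale-dependent: $B_d(\mathbf A_n,\rho_n)$ has diameter at most $2d$, hence cyclomatic number at most $f(2d)$ and tree-depth at most $2d+f(2d)$, and by $\rho$-non-dispersiveness carries mass $1-o_d(1)$; one forms, for each $d$, the bounded tree-depth modeling limit of these core balls together with the forest/growing-girth modeling limit of what hangs off them, and diagonalises over $d$.

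The hard part is precisely this diagonal assembly. The bounded tree-depth theorem cannot be applied to the sequence as a whole, since the bound $2d+f(2d)$ for the $d$-ball grows with $d$; the limit object is therefore patched together from pieces of unbounded tree-depth, and one must verify that the result is still a \emph{modeling} --- that every first-order definable subset of a power of its domain remains measurable --- and that the Strong Finitary Mass Transport Principle survives both the gluing along the cyclic remainder and the passage to the limit. The secondary, bookkeeping difficulty is to arrange the dissection of the previous paragraph so that every ${\rm FO}$-convergent sequence in $\mathcal C_f$ is genuinely exhausted by the bounded-tree-depth, colored-forest and growing-girth building blocks through the interpretation-scheme and convex-combination machinery.
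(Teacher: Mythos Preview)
The paper does not actually prove Proposition~\ref{prop:ND}. It appears in Section~\ref{sec:pb} as an announced result, preceded only by the (itself unproved) claim that every ${\rm FO}$-convergent sequence with girth tending to infinity has a modeling limit --- stated as ``an extension of the results in~\cite{NPOM2arxiv}'' --- together with the sentence ``From this follows the first positive result\dots''. There is thus no argument in the paper against which to compare your proposal.

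On its own terms, the opening reduction via Theorem~\ref{thm:hmod} is correct, and your tree-depth bound for connected graphs of bounded diameter and bounded cyclomatic number is sound (indeed ${\rm td}(G+uv)\le 1+{\rm td}(G-u)\le 1+{\rm td}(G)$, so ``each additional edge raises tree-depth by at most one'' is fine; ``depth at most ${\rm Diam}/2$'' should read ``at most the radius'', which is harmless). The genuine gaps are two. First, the step ``by Theorems~\ref{thm:ecomb} and~\ref{thm:convcomb} one may moreover take the $\mathbf A_n$ connected'' does not handle case~(1) of Theorem~\ref{thm:hmod}: applied to an already residual sequence, the comb decomposition returns the sequence unchanged (all mass sits in $\mathbf B_{0,n}$ with $\alpha_0=1$), so you still owe a direct construction of a modeling ${\rm FO}_1^{\rm local}$-limit for residual --- generally disconnected --- sequences in $\mathcal C_f$, whose components may have unbounded and unrelated diameters. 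Second, in the $\rho$-non-dispersive case you rightly observe that each ball $B_d(\mathbf A_n,\rho_n)$ has tree-depth bounded in terms of $d$ alone, and you honestly flag the ``diagonal assembly'' over $d$ as the crux, but you do not carry it out; there is no ready-made theorem that glues a tower of bounded-tree-depth modeling limits of growing balls into a single modeling while preserving measurability of all first-order definable sets and the Strong Finitary Mass Transport Principle. The ``dissection along branch vertices'' and the ``path-of-triangles as an interpretation of a periodically colored forest'' are heuristics rather than arguments --- the latter only treats periodic $2$-cores, whereas $\mathcal C_f$ allows arbitrary subdivisions of a bounded multigraph with arbitrary forests attached.
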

Note that such classes include in particular all classes with bounded degree, and more generally classes of graphs in which the maximum degree is bounded by a function of the girth.
Proposition~\ref{prop:ND} is currently our most general result toward Conjecture~\ref{conj:ND}.

\begin{problem}
\label{pb:colin}
We consider $2$-colored linear order, that is transitive tournaments
with vertices colored black or white.

Do every ${\rm FO}$-convergent sequence of $2$-colored linear order admit a modeling ${\rm FO}$-limit?
\end{problem}

Note that if the answer to Problem~\ref{pb:colin} is yes, then it follows (by suitable interpretation) that every class of graphs with bounded pathwidth has modeling ${\rm FO}$-limits.
Similarly, if the above Example~\ref{ex:tsl} extends to ${\rm FO}$-convergence in the sense that every ${\rm FO}$-convergent sequence of tree semilattices with $k$-colored domain has a
modeling ${\rm FO}$-limit then it would imply that every 
class of graphs with bounded treewidth has modeling ${\rm FO}$-limits.

Quantifier-free fragment corresponds to the best known (and thoroughly investigated) type of convergence, which is left-convergence in the case of graphs and hypergraphs. However, our knowledge of QF-convergence is far from being complete. First, it is not completely clear what form the limit object should have in case of a relational structures with several non-unary relations. More important, the extension of the theory (notions of (hyper)graphons, cut metric, regularity theorem) in the simple case of a signature reduced to a single binary function is completely open. In general this problem might be too difficult to be directly addressed. It appears that a natural restriction consists in limiting the power of the binary function:
\begin{problem}
Let $\sigma$ be a signature reduced to a single binary function $*$ (denoted as an operation), and let $T_0$ be a finite set of universal formulas (in the first-order language of $\sigma$).
 Assume that there exists a function $f$ such that for every $p$, every model $\mathbf A$ of $T_0$, and every $v_1,\dots,v_p\in A$, the $\sigma$-structure 
$\mathbf{A}\langle v_1,\dots,v_p\rangle$ generated by $v_1,\dots,v_p$ --- that is the substructure of $\mathbf A$ whose domain is the smallest subset of $A$ which contains $v_1,\dots,v_p$ and is
closed under the operation $*$ --- has a domain of cardinality at most $f(p)$.

Does there exist a graphon-like limit object for QF-convergent sequences of finite models of $T_0$? Does there exist a regularity theorem for finite models of $T_0$?
\end{problem}

Note that in some simple cases, we can answer positively. For instance in the case of tree-semilattices \cite{QFlim}, which corresponds to associative, commutative, idempotent operation $*$ with the additional property
$$
\forall x\,\forall y\,\forall z\quad [(x*z=x)\wedge(y*z=y)]\rightarrow
[(x*y=x)\vee(x*y=y)].
$$
However, this case is too simple in the sense that limit objects are
Borel tree-semilattices, which means that tree-semilattices form a 
``random-free'' class.

A possible candidate for a tractable non random-free example is the class of general semilattices, defined by general associative, commutative, idempotent operations $*$.

We can also consider signatures $\sigma$ only containing unary functional symbols.
In the case of a signature $\sigma$ reduced to a single unary function, it appears that not only 
a limits of QF-convergent sequences of finite structures can be
represented by a Borel $\sigma$-structure, but the class of these $\sigma$-structures actually admit modeling ${\rm FO}$-limits, as an extension of the results in \cite{NPOM2arxiv}.
Note that for a signature $\sigma$ consisting of at least two unary functional symbols, 
there is no hope to have modeling ${\rm FO}$-limits (as such a class is sufficiently general to allow in it an interpretation of the class of all directed graphs)
\providecommand{\noopsort}[1]{}\providecommand{\noopsort}[1]{}
\providecommand{\bysame}{\leavevmode\hbox to3em{\hrulefill}\thinspace}
\providecommand{\MR}{\relax\ifhmode\unskip\space\fi MR }
\providecommand{\MRhref}[2]{%
  \href{http://www.ams.org/mathscinet-getitem?mr=#1}{#2}
}
\providecommand{\href}[2]{#2}

\end{document}